\newtheorem{theorem}{Theorem}
\newtheorem{lemma}[theorem]{Lemma}
\newtheorem{proposition}[theorem]{Proposition}
\numberwithin{equation}{subsection}
\numberwithin{theorem}{subsection}
\theoremstyle{definition}
\newtheorem*{example*}{Example}
\newtheorem{remark}[theorem]{Remark}
\newtheorem*{remark*}{Remark}
\newcommand{\bF}{{\mathbb F}}
\newcommand{\bZ}{{\mathbb Z}}
\newcommand{\bH}{{\mathbb H}}
\newcommand{\cJ}{{\mathcal J}}
\newcommand{\cC}{{\mathcal C}}
\newcommand{\cD}{{\mathcal D}}
\newcommand{\cK}{{\mathcal K}}
\newcommand{\cR}{{\mathcal R}}
\newcommand{\cQ}{{\mathcal Q}}
\newcommand{\cA}{{\mathcal A}}
\newcommand{\frg}{{\mathfrak g}}
\newcommand{\frf}{{\mathfrak f}}
\newcommand{\fre}{{\mathfrak e}}
\newcommand{\fra}{{\mathfrak a}}
\newcommand{\frb}{{\mathfrak b}}
\newcommand{\calI}{{\mathcal I}}
\newcommand{\subo}{_{\bar 0}}
\newcommand{\subuno}{_{\bar 1}}
\providecommand{\espan}[1]{\text{span}\left\{ #1\right\}}
\providecommand{\alg}[1]{\text{alg}\left\langle #1\right\rangle}
\providecommand{\spmatrix}[1]{\left(\begin{smallmatrix} {#1}&0\\ 0&{#1}^{-1}\end{smallmatrix}\right)}
 \DeclareMathOperator{\tkk}{\mathfrak{tkk}}
 \DeclareMathOperator{\frsl}{{\mathfrak{sl}}}
 \DeclareMathOperator{\frsp}{{\mathfrak{sp}}}
 \DeclareMathOperator{\frso}{{\mathfrak{so}}}
 \DeclareMathOperator{\frgl}{{\mathfrak{gl}}}
 \DeclareMathOperator{\frosp}{{\mathfrak{osp}}}
 \DeclareMathOperator{\ad}{ad}
 \DeclareMathOperator{\der}{\mathfrak{der}}
 \DeclareMathOperator{\End}{End}
 \DeclareMathOperator{\Hom}{Hom}
 \DeclareMathOperator{\Mat}{Mat}
 \DeclareMathOperator{\Aut}{Aut}
  \DeclareMathOperator{\Int}{Inn}
 \DeclareMathOperator{\Diag}{Diag}
   \DeclareMathOperator{\Supp}{Supp}
 \DeclareMathOperator{\degree}{deg}
 \DeclareMathOperator{\Cl}{\mathfrak{Cl}}
\newenvironment{romanenumerate}
 {\begin{enumerate}
 
 }{\end{enumerate}}
\begin{document}

\title{Fine gradings on exceptional simple Lie superalgebras}


\author[Cristina Draper]{Cristina Draper$^{\star}$}
\thanks{$^{\star}$ Supported by the the Spanish Ministerio de
 Educaci\'on y Ciencia
 and FEDER  (MTM2007-60333 and MTM2010-15223) and by the Junta de
Andaluc\'{\i}a and Fondos FEDER through projects FQM-336,
FQM-3737 and FQM-2467 and by the Spanish Ministerio de Educaci\'on y Ciencia
under the project ``Ingenio Mathematica (i-math'' No. CSD2006-00032,
Consolider-Ingenio 2010)}

\address{Departamento de Matem\'atica Aplicada, Escuela de las Ingenier\'{\i}as, Universidad de M\'alaga, Ampliaci\'on del Campus de Teatinos,  29071 M\'alaga, Spain}
\email{cdf@uma.es}

\author[Alberto Elduque]{Alberto Elduque$^{\star\star}$}
\thanks{$^{\star\star}$ Supported by the Spanish Ministerio de
 Educaci\'on y Ciencia
 and FEDER (MTM2007-67884-C04-02 and MTM2010-18370-C04-02) and by the
 Diputaci\'on General de Arag\'on (Grupo de Investigaci\'on de
 \'Algebra)}
\address{Departamento de Matem\'aticas e
 Instituto Universitario de Matem\'aticas y Aplicaciones,
 Universidad de Zaragoza, 50009 Zaragoza, Spain}
\email{elduque@unizar.es}

\author[C\'andido Mart\'{\i}n-Gonz\'alez]{C\'andido Mart\'{\i}n-Gonz\'alez$^{\star}$}
\address{Departamento de \'Algebra, Geometr\'{\i}a y Topolog\'{\i}a, Facultad de Ciencias, Universidad de M\'alaga, Campus de Teatinos, 29080 M\'alaga, Spain}
\email{candido@apncs.cie.uma.es}





\begin{abstract}
The fine abelian group gradings on the simple exceptional classical Lie superalgebras over algebraically closed fields of characteristic $0$ are determined up to equivalence.
\end{abstract}

\maketitle

\section{Introduction}\label{se:Introduction}

The purpose of this paper is the complete determination of the fine gradings on the exceptional simple Lie superalgebras over an algebraically closed field of characteristic $0$ in Kac's classification \cite{KacLie}: $D(2,1;\alpha)$ ($\alpha\ne 0,-1$), $G(3)$ and $F(4)$.

Let $\cA$ be an algebra (not necessarily associative) over a field $\bF$. An \emph{abelian group grading} on $\cA$ is a decomposition
\begin{equation}\label{eq:grading}
\Gamma:\cA=\oplus_{g\in G}\cA_g
\end{equation}
into a direct sum of subspaces (the \emph{homogeneous components}), where $G$ is an abelian group, and such that $\cA_g\cA_h\subseteq \cA_{g+h}$ for any $g,h\in G$. Since only abelian group gradings will be considered in this paper, we will refer to them simply as \emph{gradings}.

Given two gradings $\Gamma:\cA=\oplus_{g\in G}\cA_g$ and $\Gamma':\cA=\oplus_{g'\in G'}\cA_{g'}$, $\Gamma$ is said to be a \emph{refinement} of $\Gamma'$ (or $\Gamma'$ a \emph{coarsening} of $\Gamma$) if for any $g\in G$ there is  $g'\in G'$ such that $\cA_g\subseteq \cA_{g'}$. That is, any homogeneous component of $\Gamma'$ is the direct sum of homogeneous components of $\Gamma$. The refinement is proper if there are $g\in G$ and $g'\in G'$ such that $\cA_g\subsetneq\cA_{g'}$. A grading is said to be \emph{fine} if it admits no proper refinements. Therefore, any grading is obtained as a coarsening of some fine grading.

Moreover, the gradings $\Gamma$ and $\Gamma'$ are said to be \emph{equivalent} if there is an automorphism $\varphi\in \Aut\cA$ such that for any $g\in G$ there is a $g'\in G'$ with $\varphi(\cA_g)=\cA_{g'}$.

The \emph{type} of a grading $\Gamma$ on a finite-dimensional algebra $\cA$ is the sequence of numbers $(n_1,\ldots,n_r)$ where $n_i$ is the number of homogeneous components of dimension $i$, $i=1,\ldots,r$, with $n_r\ne 0$. Thus $\dim \cA=\sum_{i=1}^r in_i$.

\smallskip

Assume from now on that the ground field $\bF$ is algebraically closed of characteristic $0$ and that $\cA$ is finite-dimensional over $\bF$. Given any grading $\Gamma$, the \emph{diagonal group} $\Diag_\Gamma(\cA)$ of $\Gamma$ is the subgroup of $\Aut\cA$ consisting of those automorphisms which act as a scalar multiple of the identity on each homogeneous component.
If $\Gamma$ is a fine grading, then $\Diag_\Gamma(\cA)$ is a maximal abelian diagonalizable subgroup of $\Aut\cA$ (see \cite{PateraZassenhaus}). And conversely, given any maximal abelian diagonalizable subgroup $M$ of $\Aut\cA$, the algebra $\cA$ decomposes as the direct sum of the common eigenspaces of the elements in $M$: $\Gamma_M:\cA=\oplus_{\chi\in\hat M}\cA_{\chi}$, where $\hat M$ is the group of characters of $M$, that is, the continuous group homomorphisms $\chi:M\rightarrow \bF^\times$ for the Zariski topology, with $\cA_\chi=\{ x\in\cA: \varphi(x)=\chi(\varphi)x\ \forall\varphi\in M\}$. Then $\Gamma_M$ is a fine grading. In this way, the classification of fine gradings on $\cA$ up to equivalence corresponds to the classification of maximal abelian diagonalizable subgroups of $\Aut\cA$ up to conjugation.


\smallskip

A very important fine grading on a simple finite-dimensional Lie algebra $\frg$ over $\bF$ is given by the root space decomposition relative to a Cartan subalgebra (the \emph{Cartan grading}). This is a grading over $\bZ^r$ with $r$ the rank of the Lie algebra. But there appear many other fine gradings, some of them related to gradings on matrix algebras. The fine gradings on the simple classical Lie algebras have been determined through the work of many authors (see \cite{Eld.Fine} and the references therein). For the exceptional simple Lie algebras, the fine gradings have been determined for $G_2$ (see \cite{CristinaCandidoG2}, \cite{BahturinTvalavadzeG2}, or \cite{EKG2F4}), for $F_4$ (\cite{CristinaCandidoF4}, \cite{CristinaF4revisited}, or \cite{EKG2F4}), and for $E_6$ (\cite{CristinaViruE6}). Many of the fine gradings on these exceptional algebras are strongly related to gradings on either the octonions or the exceptional simple Jordan algebra (or Albert algebra).

\medskip

The definitions above on gradings on algebras carry over in a straightforward way to superalgebras. Thus a grading on a superalgebra $\cA=\cA\subo\oplus\cA\subuno$ is a decomposition as in \eqref{eq:grading} with the homogeneous components $\cA_g$'s being subspaces in the super sense: $\cA_g=(\cA_g\cap \cA\subo)\oplus(\cA_g\cap\cA\subuno)$. It makes sense to talk about refinements, fine gradings, equivalent gradings, ...

\smallskip

The exceptional simple Lie superalgebras $D(2,1;\alpha)$ ($\alpha\ne 0,-1$), $G(3)$ and $F(4)$ in Kac's classification \cite{KacLie} are related to some noteworthy algebras and superalgebras. The Lie superalgebras $D(2,1;\alpha)$ ($\alpha\ne 0,-1$) are related to the algebra of quaternions, while $G(3)$ and $F(4)$ are related to the octonions. On the other hand, $F(4)$ appears related to an exceptional simple ten-dimensional Jordan superalgebra discovered by Kac \cite{KacJordan} by means of the so-called Tits-Kantor-Koecher construction. The gradings on the octonions were classified in \cite{EldGrOctonions}, and the gradings on the Kac Jordan superalgebra in \cite{AntonioCristinaCandidoK10}. These results will be used here.

\smallskip

The paper is structured as follows. The next section will be devoted to review some facts on gradings on matrix algebras, as some gradings on the even Clifford algebra of a seven-dimensional vector space endowed with a nondegenerate quadratic form will be needed in dealing with the simple Lie superalgebra $F(4)$. The relationships of $F(4)$ with the octonions and with Kac Jordan superalgebra will be reviewed in Section~\ref{se:modelsF(4)}. These results will be instrumental in the classification of the gradings on $F(4)$ in Section~\ref{se:gradingsF(4)}. Section~\ref{se:gradingsG(3)} deals with the fine gradings on $G(3)$. The situation here is simpler using the known results on gradings on the octonions and on the simple Lie algebra of type $G_2$. In Section~\ref{se:AutD21a} we will look at certain subgroups of the group of automorphisms of $D(2,1;\alpha)$ ($\alpha\ne 0,-1$) which will appear in Section~\ref{se:innergradingsD21a}, which deals with those fine gradings on these Lie superalgebras with corresponding maximal abelian diagonalizable subgroup fixing the three simple ideals of the even part. Finally Section~\ref{se:outerD21a} will deal with the remaining cases in which there are automorphisms in the maximal abelian diagonalizable subgroup which permute some of these simple ideals.

\smallskip

It turns out that, up to equivalence, there are five fine gradings on $F(4)$ (Theorem \ref{th:MainThF(4)}), only two on $G(3)$ (Theorem \ref{th:MainThG(3)}), and five on $D(2,1;\alpha)$ ($\alpha\ne 0,-1$) if $\alpha\not\in\{1,-2,-\frac12,\omega,\omega^2\}$ (where $\omega$ is a primitive cubic root of $1$), six on $D(2,1;\alpha)$ if $\alpha\in\{1,-2,-\frac12\}$, and four on $D(2,1;\alpha)$ if $\alpha\in \{\omega,\omega^2\}$ (Theorem \ref{th:MainThD21a}).

In what follows our ground field $\bF$ will be assumed to be algebraically closed and of characteristic $0$. All vector spaces and (super)algebras will be  finite-dimensional over $\bF$, and unadorned tensor products $\otimes$ will be assumed to be defined over $\bF$.

\bigskip


\section{Gradings on matrix algebras}\label{se:Matrix}

In this section we will recall some known results which can be consulted in \cite{Eld.Fine}.

\subsection{}\label{ss:R} Let ${\cR}$ be a simple central associative algebra (that is, ${\cR}$ is isomorphic to the algebra of matrices $\Mat_n(\bF)$ for some natural number $n$), and let $\Gamma: {\cR}=\oplus_{g\in G} {\cR}_g$ be a grading on ${\cR}$.  Let $I$ be a minimal left graded ideal of ${\cR}$. Then there is an idempotent $e\in {\cR}_0$ such that $I={\cR}e$, the subalgebra ${\cD}=e{\cR}e$ is a graded division algebra isomorphic (as graded algebras) to the endomorphism ring $\End_{\cR}(I)$ (action of ${\cD}$ on $I$ on the right), where $\End_{\cR}(I)$ is endowed with the grading induced by the grading on $I$.

Let $V$ be a graded irreducible module for ${\cR}$: $V=\oplus_{g\in \tilde G} V_g$, where $\tilde G$ is a group containing the group $G$ as a subgroup, and where ${\cR}_hV_g\subseteq V_{h+g}$ for any $h\in G$ and $g\in \tilde G$. By irreducibility $V={\cR}V=I{\cR}V=IV$, so there is a homogeneous element $v\in V$ such that $V=Iv$. Therefore $V$ is isomorphic to $I$ by means of a homogeneous map $\varphi:I\rightarrow V$, $x\mapsto xv$, of degree $\deg(v)\in \tilde G$ ($\varphi(I_h)\subseteq V_{h+\deg(v)}$). Thus the grading on ${\cR}$ determines uniquely the grading on the graded irreducible module $V$ up to a shift (by the element $\deg(v)$).

Then there appears the natural graded isomorphism $\Phi:{\cD}\simeq \End_{\cR}(I)\rightarrow \End_{\cR}(V)$, $f\mapsto \varphi f\varphi^{-1}$, between the corresponding graded division algebras. In this situation we will write $\calI({\cR})=[{\cD}]$, where $[{\cD}]$ denotes the isomorphism class of the graded division algebra ${\cD}$ (as a graded algebra).

Moreover, $\cR$ is then graded isomorphic to $\End_{\cD}(V)$, this latter algebra endowed with the grading induced by the one in $V$. In particular the homogeneous component of degree $0$ of $\cR\cong\End_\cD(V)\cong \Mat_m(\cD)$ ($m=\dim_\cD V$) contains the diagonal matrices with entries in $\cD_0=\bF$, so it contains $m$ orthogonal idempotents.

\smallskip

The structure of the graded division algebras is given in \cite[Proposition 2.1]{Eld.Fine}:

\begin{proposition}\label{pr:graded_division}
Let $\Gamma: {\cD}=\oplus_{g\in G}{\cD}_g$ be a central graded division algebra over the group $G$. Then $H=\Supp\Gamma$ is a subgroup of $G$ and there is a decomposition $H=H_1\times \cdots\times H_r$, where for each $i=1,\ldots,r$, $H_i$ is isomorphic to $\bZ_{n_i}\times\bZ_{n_i}$, $n_i$ a power of a prime, such that ${\cD}={\cD}_1\cdot\cdots\cdot {\cD}_r\simeq {\cD}_1\otimes \cdots\otimes {\cD}_r$, where each ${\cD}_i=\oplus_{h\in H_i}{\cD}_h$ is graded isomorphic to the algebra $A_{n_i}$, where $A_n$ is the algebra of matrices $\Mat_n(\bF)$, but considered as the algebra generated by two elements $x$ and $y$ satisfying $x^n=1=y^n$, $xy=\epsilon_n yx$,
where $\epsilon_n$ is a primitive $n$th root of unity, and graded over $\bZ_n\times\bZ_n$ with $\degree(x)=(\bar 1,\bar 0)$ and $\degree(y)=(\bar 0,\bar 1)$.
\end{proposition}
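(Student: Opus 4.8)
The plan is to analyze the structure of a central graded division algebra $\cD$ by exploiting two features simultaneously: the algebra structure (it is a matrix algebra $\Mat_n(\bF)$) and the grading (every nonzero homogeneous element is invertible). First I would check that $H=\Supp\Gamma$ is a subgroup: if $\cD_g\neq 0$ and $\cD_h\neq 0$, pick nonzero homogeneous $a\in\cD_g$, $b\in\cD_h$; since $a,b$ are invertible, so is $ab\in\cD_{g+h}$, hence $g+h\in H$, and $a^{-1}$ is homogeneous of degree $-g$ (its degree cannot be anything else, since $aa^{-1}=1\in\cD_0$), so $-g\in H$. Moreover each $\cD_g$ with $g\in H$ is one-dimensional: if $a,b\in\cD_g\setminus\{0\}$ then $ab^{-1}\in\cD_0=\bF$ (using centrality and simplicity: the identity component $\cD_0$ is a division algebra over $\bF$ which is a subfield, hence equals $\bF$ as $\bF$ is algebraically closed), so $a\in\bF b$. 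Therefore $\dim\cD=|H|$ and choosing a nonzero element $u_g$ in each $\cD_g$ gives $u_gu_h=\beta(g,h)u_{g+h}$ for a $2$-cocycle $\beta:H\times H\to\bF^\times$; thus $\cD$ is a twisted group algebra $\bF^\beta[H]$.

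Next I would extract the commutation data. For $g,h\in H$ one has $u_gu_h=\lambda(g,h)\,u_hu_g$ where $\lambda(g,h)=\beta(g,h)\beta(h,g)^{-1}\in\bF^\times$. The map $\lambda$ is an alternating bicharacter on $H$ (bilinearity follows from associativity of the cocycle, alternation is $\lambda(g,g)=1$). The key point is that $\lambda$ is \emph{nondegenerate}: its radical $R=\{g\in H: \lambda(g,h)=1\ \forall h\in H\}$ consists of those $g$ for which $u_g$ is central in $\cD$; since $\cD$ is central simple over $\bF$, its center is $\bF=\cD_0$, forcing $R=0$. A finite abelian group carrying a nondegenerate alternating bicharacter into $\bF^\times$ decomposes, by the standard symplectic-basis argument over the (divisible) group $\bF^\times\supseteq$ roots of unity, as an orthogonal direct sum of "hyperbolic planes" $H_i$, each $H_i\cong\bZ_{n_i}\times\bZ_{n_i}$ with $n_i$ a prime power and $\lambda$ restricted to $H_i$ pairing the two generators by a primitive $n_i$-th root of unity. (This is essentially the classification of finite symplectic abelian groups; one reduces to a $p$-group and then to a single primary component and peels off rank-two pieces.)

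Finally I would assemble the tensor decomposition. Set $\cD_i=\oplus_{h\in H_i}\cD_h$, a graded subalgebra. The multiplication map $\cD_1\otimes\cdots\otimes\cD_r\to\cD$ is surjective since the $H_i$ generate $H$, and it is injective by dimension count ($\prod|H_i|=|H|=\dim\cD$); it is an algebra homomorphism because elements of $\cD_i$ and $\cD_j$ for $i\neq j$ commute ($\lambda$ is trivial across distinct $H_i$), after rescaling the $u_g$'s so that the cocycle $\beta$ is trivial between different factors (this is always possible: a cocycle on $H_i\oplus H_j$ restricting trivially on each factor commutation-wise is cohomologous to one with $u_{g+h}=u_gu_h$ for $g\in H_i$, $h\in H_j$). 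Within a single factor $\cD_i$, choosing $x=u_{(\bar1,\bar0)}$ and $y=u_{(\bar0,\bar1)}$ suitably normalized gives $x^{n_i}=y^{n_i}=1$ and $xy=\epsilon_{n_i}yx$; comparing with $A_{n_i}$, which is the universal algebra on such generators and is $n_i^2$-dimensional simple, yields a graded isomorphism $\cD_i\cong A_{n_i}$.

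The main obstacle I anticipate is the cocycle bookkeeping in the last step: deducing from nondegeneracy of $\lambda$ the existence of homogeneous generators $u_g$ that can be simultaneously normalized so that (a) the cross-factor cocycle is trivial, giving the honest tensor product $\simeq$, and (b) within each factor the relations $x^{n_i}=y^{n_i}=1$ hold on the nose. Both are statements about trivializing a $2$-cocycle after controlling its commutator ($=\lambda$); over $\bF$ algebraically closed of characteristic $0$ this works because $H^2(H_i,\bF^\times)$ is cyclic of order $n_i$ and is detected by $\lambda$, so once $\lambda$ is fixed the cocycle is pinned down up to coboundary, i.e. up to rescaling the $u_g$. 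Getting this normalization uniformly across all factors is the technical heart; everything else is the symplectic linear algebra of $\lambda$ together with dimension counts.
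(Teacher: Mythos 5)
Your proposal is essentially correct and is the standard proof of this classical result (due to Bahturin--Sehgal--Zaicev and others); note that the paper under review does \emph{not} prove Proposition~\ref{pr:graded_division} itself, but simply quotes it from \cite{Eld.Fine}, where the argument is exactly along the lines you sketch: support is a subgroup, $\cD_0=\bF$, one-dimensional homogeneous components, twisted group algebra, nondegenerate alternating commutator bicharacter $\lambda$, symplectic decomposition of $(H,\lambda)$, and identification of each factor with $A_{n_i}$.

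One simplification is available at the step you flag as the ``technical heart''. Once you know that $\lambda$ vanishes between distinct $H_i$, the subalgebras $\cD_i$ mutually centralize each other, and each $\cD_i=\bF^{\beta|_{H_i}}[H_i]$ is central simple over $\bF$ (its center is $\cD_0=\bF$, by nondegeneracy of $\lambda|_{H_i}$). The canonical multiplication map $\cD_1\otimes\cdots\otimes\cD_r\to\cD$ is then automatically an algebra homomorphism, and it is an isomorphism by the standard fact that commuting central simple subalgebras of a central simple algebra generate a tensor product (together with the dimension count $\prod|H_i|=|H|=\dim\cD$). There is no need to first normalize the cocycle to be trivial across factors. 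The only normalization you really need is the intra-factor one: pick homogeneous $x\in\cD_{(\bar1,\bar0)}$ and $y\in\cD_{(\bar0,\bar1)}$ inside $\cD_i$, observe $x^{n_i},y^{n_i}\in\cD_0=\bF^\times$, and rescale $x,y$ by $n_i$-th roots (available since $\bF$ is algebraically closed) so that $x^{n_i}=y^{n_i}=1$; the relation $xy=\epsilon_{n_i}yx$ with $\epsilon_{n_i}$ primitive comes from the nondegeneracy of $\lambda|_{H_i}$. That pins down $\cD_i\cong A_{n_i}$ as graded algebras.

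Two tiny points worth tightening for a clean write-up: to show $a^{-1}$ is homogeneous, write $a^{-1}=\sum_h b_h$ and use invertibility of $a$ to kill all $b_h$ with $h\ne-g$; and the identification $\cD_0=\bF$ uses that a finite-dimensional division algebra over an algebraically closed field is the base field (not merely that it is ``a subfield'').
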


The most important example for us of a graded division algebra is the graded quaternion algebra ${\cQ}=\Mat_2(\bF)$, which is a $\bZ_2\times\bZ_2$-graded division algebra with:
\begin{equation}\label{eq:quaternion_units}
{\cQ}_{(\bar 0,\bar 0)}=\bF 1,\quad
 {\cQ}_{(\bar 1,\bar 0)}=\bF q_1,\quad
 {\cQ}_{(\bar 0,\bar 1)}=\bF q_2,\quad
 {\cQ}_{(\bar 1,\bar 1)}=\bF q_3,
\end{equation}
where
\[
q_1=\begin{pmatrix} 1&0\\ 0&-1\end{pmatrix},\quad
q_2=\begin{pmatrix} 0&1\\ 1&0\end{pmatrix},\quad
q_3=\begin{pmatrix} 0&1\\ -1&0\end{pmatrix},
\]
so that $q_1^2=1=q_2^2$ and $q_1q_2=-q_2q_1=q_3$. It is endowed with its standard involution, where $\bar q_i=-q_i$ for $i=1,2,3$, which preserves the grading. This involution is the adjoint involution relative to the norm $N:\cQ\rightarrow \bF$, where $N(x)=\det(x)$ for any $x\in \cQ$. Note that $\cQ=A_2$.

\smallskip

Given a graded involution $\tau$ of a simple graded algebra ${\cR}\simeq \End_{\cD}(V)$ as above (this means that ${\cR}_g^\tau \subseteq {\cR}_g$ for any $g\in G$), and a fixed graded involution $\tau_{\cD}$ of the graded division algebra ${\cD}$, then there is $\epsilon=\pm 1$ and an $\epsilon$-hermitian form $h:V\times V\rightarrow {\cD}$ relative to $\tau_{\cD}$:
\[
h(v,wd)=h(v,w)d,\quad h(w,v)=\epsilon h(v,w)^{\tau_{\cD}},
\]
for any $d\in D$ and $v,w\in V$, such that $h(rv,w)=h(v,r^\tau w)$ for any $r\in {\cR}$ and $v,w\in V$. (See \cite[Section 3]{Eld.Fine}.)

\bigskip

\subsection{}\label{ss:U} Let $(U,q)$ be a vector space graded over an abelian group $G$: $U=\oplus_{g\in G} U_g$, and endowed with a nondegenerate quadratic from such that $q(U_g,U_h)=0$ unless $g+h=0$, where $q(u,v)=q(u+v)-q(u)-q(v)$ denotes the polar form of $q$. In other words, $q$ induces a degree $0$ linear form $q:U\otimes U\rightarrow \bF$, where $\bF$ is endowed with the trivial grading $\bF=\bF_0$. In this situation the grading on $U$ is said to be \emph{compatible} with the quadratic form $q$.

Assume that the dimension of $U$ is odd: $\dim U=2n+1$. Then there is a homogeneous basis $\{u_i,v_i: i=1,\ldots,m\}\cup\{w_j:j=1\ldots,2l+1\}$ with $n=m+l$,
\[
\deg(u_i)=-\deg(v_i)=g_i\in G,\quad \deg(w_j)=h_j\in G,\ 2h_j=0,
\]
for $i=1,\ldots,m$, $j=1,\ldots,2l+1$,
and the only nonzero values of $q$ for basic elements are given by
\[
q(u_i,v_i)=1=q(w_j),
\]
for any $i=1,\ldots,m$ and $j=1,\ldots,2l+1$.

This grading on $U$ induces a grading on the Clifford algebra $\Cl(U,q)$, which is the quotient of the tensor algebra on $U$ by the (graded) ideal generated by $\{u^2-q(u)1: u\in U\}$. The Clifford algebra $\Cl(U,q)$ is also naturally $\bZ_2$-graded (the elements of $U$ being odd), so $\Cl(U,q)=\Cl\subo(U,q)\oplus\Cl\subuno(U,q)$, and the even Clifford algebra $\Cl\subo(U,q)$ is central simple and inherits the grading over $G$ induced by the grading on $U$.

The element
\[
z=[u_1,v_1]\dotsm[u_m,v_m]w_1\dotsm w_{2l+1}
\]
is central with $z^2=(-1)^l$ (note that $[u_1,v_1]^2=(u_1v_1-v_1u_1)^2=u_1v_1u_1v_1+v_1u_1v_1u_1=u_1v_1+v_1u_1=q(u_1,v_1)1=1$ since $u_1^2=v_1^2=0$), and the center of $\Cl(U,q)$ is $\bF 1+\bF z$.

The next lemma will be quite useful later on.

\begin{lemma}\label{le:Uuv} Let $(U,q)$ be a $G$-graded nondegenerate odd dimensional quadratic space as before, let $u,v$ be homogeneous elements of $U$ with $q(u)=q(v)=0$ and $q(u,v)=1$, and let $U'$ be the orthogonal subspace to $u$ and $v$ (which inherits a $G$-grading too). Then there is a two-dimensional $G$-graded vector space $W$ such that $\Cl\subo(U,q)$ is isomorphic, as a graded algebra over $G$, to the tensor product $\End_{\bF}(W)\otimes\Cl\subo(U',q)$, where the grading on $\End_\bF(W)$ is the one induced by the grading on $W$.
\end{lemma}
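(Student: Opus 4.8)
The plan is to build the claimed isomorphism very concretely from the structure of $\Cl(U,q)$, exploiting the fact that a hyperbolic plane $\bF u \oplus \bF v$ (with $q(u)=q(v)=0$, $q(u,v)=1$) sitting inside $U$ produces a matrix-algebra factor $\End_\bF(W)$ in the Clifford algebra. First I would set $e=uv$ and $f=vu$; since $u^2=v^2=0$ and $uv+vu=q(u,v)1=1$, these are orthogonal idempotents in $\Cl\subo(U,q)$ with $e+f=1$, and $u=ue=fu$, $v=ev=vf$, $uv=e$, $vu=f$. So the subalgebra $\mathcal{W}$ of $\Cl\subo(U,q)$ generated by $u v$ and $v u$ — equivalently, spanned by $e,f$ and by $uv,\ vu$ — is a copy of $\End_\bF(W)$ where $W$ is the 2-dimensional space $\Cl(U,q)\cdot u$ (or one can take $W=\espan{e\cdot U, \dots}$; cleanest is $W= \Cl\subo(U,q)e \cap (\bF e\oplus \bF v)$, a rank-2 right module). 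The grading: $\deg u = g$, $\deg v = -g$ for some $g\in G$, so $e=uv$ and $f=vu$ are both homogeneous of degree $0$, while $uv$ has degree $g$ ... let me restate: among the four matrix units of $\mathcal W$, namely $e$, $f$, $u$ (which plays the role of an off-diagonal unit, since $eu = uv\cdot u$? no). Better: the matrix units are $e$, $f$, and the two ``nilpotents'' $u' := e u' $ — I would simply declare $W$ to be $\bF u \oplus \bF v$ regarded as a graded subspace (degrees $g$ and $-g$), verify that left multiplication inside $\Cl$ by $e,f,u,v$ preserves $W$ modulo scalars, and check these four elements act as the standard matrix units on $W$. This identifies $\mathcal W \cong \End_\bF(W)$ as $G$-graded algebras.

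Next I would identify the ``other factor'' as $\Cl\subo(U',q)$. Let $U'=\{u,v\}^\perp$, which is $G$-graded because $u,v$ are homogeneous, and $q|_{U'}$ is nondegenerate of dimension $\dim U - 2$, still odd. I would show that the centralizer of $\mathcal W$ inside $\Cl\subo(U,q)$ is exactly the subalgebra generated by $U'$ together with the central element $uv - vu = e - f$ (note $(e-f)^2 = 1$, and $e-f$ commutes with everything in $U'$ since $u,v\perp U'$ means $uw=-wu$, $vw=-wv$ for $w\in U'$, hence $(e-f)w = uvw - vuw = w(\dots)$ — a short sign computation gives $(e-f)w = w(f-e)\cdot(-1)=w(e-f)$). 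The point is that this centralizer is graded isomorphic to $\Cl\subo(U',q)$: the even Clifford algebra of $U'$ is generated by products $w_i w_j$ with $w_i,w_j\in U'$, and tensoring with the central $\bF 1 \oplus \bF(e-f)$ recovers it; more precisely, one checks $\Cl\subo(U',q)\hookrightarrow \Cl\subo(U,q)$ via $w_iw_j\mapsto w_iw_j$ and $w_i\mapsto (e-f)w_i$, landing in the centralizer of $\mathcal W$, and this is a graded algebra monomorphism by the universal property of the Clifford algebra applied to the quadratic space $U'$ (the map $U'\to \Cl\subo(U,q)$, $w\mapsto (e-f)w$ satisfies $((e-f)w)^2 = (e-f)^2 w^2 = q(w)1$ because $e-f$ is central and squares to $1$).

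Finally I would assemble the isomorphism $\End_\bF(W)\otimes \Cl\subo(U',q)\to \Cl\subo(U,q)$ as the multiplication map $a\otimes b\mapsto ab$, using that $\mathcal W$ and the image of $\Cl\subo(U',q)$ commute elementwise and generate $\Cl\subo(U,q)$ (they generate because together they contain $u,v$ — from $\mathcal W$ — and all of $U'$ — from $(e-f)\cdot((e-f)w)=w$ — hence all of $U$, hence all of $\Cl\subo(U,q)$). A dimension count, $\dim \Cl\subo(U,q) = 2^{\dim U -1} = 4\cdot 2^{\dim U' - 1} = \dim\End_\bF(W)\cdot \dim\Cl\subo(U',q)$, upgrades ``surjective'' to ``bijective''. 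That the map is $G$-graded follows since every building block ($e,f,u,v$ and the $w$'s, shifted by $e-f$ of degree $0$) is homogeneous of the expected degree. The main obstacle I anticipate is purely bookkeeping: getting the signs right in the commutation relations — in particular verifying that $e-f$ is genuinely central in $\Cl\subo(U,q)$ and anticommutes/commutes correctly with $U'$, and confirming the grading-degree of each matrix unit of $\mathcal W$ so that the tensor decomposition respects the $G$-grading rather than merely the algebra structure. None of these steps is deep; the lemma is essentially the statement that splitting off a hyperbolic plane from a quadratic space tensors off a $2\times2$ matrix factor from the even Clifford algebra, tracked through the grading.
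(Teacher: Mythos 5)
There is a genuine gap, and it traces to a single missing idea: nothing you have written actually produces a four\hyp{}dimensional subalgebra $\mathcal W\cong\End_\bF(W)$ \emph{inside the even part} $\Cl\subo(U,q)$.  Your candidate ``the subalgebra generated by $uv$ and $vu$'' is only $\bF e\oplus\bF f$ (two\hyp{}dimensional, the diagonal matrices); the off\hyp{}diagonal matrix units you would like to use, namely $u$ and $v$, live in $\Cl\subuno(U,q)$, not in $\Cl\subo(U,q)$.  Left multiplication by $u,v$ does \emph{not} preserve $\bF u\oplus\bF v$ (for instance $u\cdot v=uv=e\notin\bF u\oplus\bF v$), so the identification ``$\mathcal W\cong\End_\bF(\bF u\oplus\bF v)$'' does not exist.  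The same parity problem undermines the second half of your argument.  The element $e-f=uv-vu$ is \emph{even}, so for $w\in U'$ the product $(e-f)w$ is a product of three vectors, hence lies in $\Cl\subuno(U,q)$; the proposed map $U'\to\Cl\subo(U,q)$, $w\mapsto(e-f)w$, is simply not a map into the even Clifford algebra.  Moreover $e-f$ is not central in $\Cl\subo(U,q)$: it anticommutes with $u$ and $v$, and for any $w\in U'$ the element $uw$ is even and satisfies $(e-f)(uw)=-(uw)(e-f)$.  Finally, since neither $u,v$ nor $U'$ lies in $\Cl\subo(U,q)$, the closing claim that $\mathcal W$ and the image of $\Cl\subo(U',q)$ generate $\Cl\subo(U,q)$ ``because together they contain all of $U$'' does not make sense, and the surjectivity you want to upgrade by a dimension count is never established.

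What is missing is the odd element $z$ (the central element of $\Cl(U,q)$, or equivalently $\bar z=[u_2,v_2]\dotsm w_1\dotsm w_{2l+1}$, the analogous element for $\Cl(U',q)$).  Because $z$ has odd $\bZ_2$\hyp{}parity, $zu$, $zv$ and $zw$ for $w\in U'$ all lie in $\Cl\subo(U,q)$; because $\bar z$ commutes with $U'$ and anticommutes with $u$ and $v$, the elements $zu_1,zv_1$ on the one hand, and $\bar zu_2,\bar zv_2,\dots,\bar zw_1,\dots,\bar zw_{2l}$ on the other, commute with each other, with the first pair generating a copy of $\Mat_2(\bF)$ and the second generating $\Cl\subo(U',q)$.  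This is precisely the correction your argument needs: replace the ill\hyp{}defined ``matrix units $u,v$'' by $zu,zv$, and replace the twist $w\mapsto(e-f)w$ by $w\mapsto\bar z w$ (which really does land in $\Cl\subo(U,q)$, satisfies $(\bar zw)^2=\pm q(w)1$, and commutes with $zu,zv$).  Once you make this substitution the rest of your outline, the commutation check, the dimension count, and the degree bookkeeping (noting that $z$ is homogeneous in the $G$\hyp{}grading as well) goes through, and you recover the paper's proof.
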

\begin{proof}
Complete $u_1=u$ and $v_1=v$ to a homogeneous basis as above and let $z=[u_1,v_1]\dotsm [u_m,v_m]w_1\dotsm w_{2l+1}$. Then $\Cl\subo(U,q)$ is generated as an algebra by the elements $zu_1,zv_1,\ldots,zu_m,zv_m,zw_1,\ldots,zw_{2l}$. Let $\bar z=[u_2,v_2]\dotsm [u_m,v_m]w_1\dotsm w_{2l+1}$. Then $\bar z^2=(-1)^l$ too, and $z=[u_1,v_1]\bar z=\bar z[u_1,v_1]$. Besides, $\bar z$ commutes with $u_2,v_2,\ldots,u_m,v_m,w_1,\ldots,w_{2l+1}$ but $\bar z u_1=-u_1\bar z$, $\bar z v_1=-v_1\bar z$. As $[u_1,v_1]=(-1)^l[zu_1,zv_1]$, $\Cl\subo(U,q)$ is generated by the elements $zu_1$ and $zv_1$ and by the elements $\bar zu_2,\bar zv_2,\ldots,\bar zu_m,\bar zv_m,\bar zw_1,\ldots,\bar zw_{2l}$. The first two elements commute with the remaining ones, so we get:
\[
\begin{split}
\Cl\subo(U,q)&=\alg{zu_1,zv_1}\alg{\bar zu_2,\bar zv_2,\ldots,\bar zu_m,\bar zv_m,\bar zw_1,\ldots,\bar zw_{2l}}\\
&\simeq \alg{zu_1,zv_1}\otimes\alg{\bar zu_2,\bar zv_2,\ldots,\bar zu_m,\bar zv_m,\bar zw_1,\ldots,\bar zw_{2l}}\\
&=\alg{zu_1,zv_1}\otimes\Cl\subo(U',q).
\end{split}
\]
But the map $zu_1\mapsto \left(\begin{smallmatrix} 0&1\\ 0&0\end{smallmatrix}\right)$, $zv_1\mapsto\left(\begin{smallmatrix} 0&0\\ (-1)^l&0\end{smallmatrix}\right)$, extends to an isomorphism $S=\alg{zu_1,zv_1}\rightarrow \Mat_2(\bF)$. The element $e=(-1)^l(zu_1)(zv_1)=u_1v_1$ is a degree $0$ idempotent with $eSe=\bF e$ and hence $I=Se=\espan{u_1v_1,zv_1}$ is an irreducible graded left ideal. With $W=I$ we get $S\simeq \End_{\bF}(W)$ as required.
\end{proof}

\smallskip

Note that $\{u_1v_1,zv_1\}$ is a homogeneous basis of $W$ with $\deg(u_1v_1)=0$ and $\deg(zv_1)=\deg(z)+\deg(v_1)=\deg(z)-\deg(u_1)$, with $\deg(z)$ being the sum of the degrees of the elements in the homogeneous basis considered (although it is independent of the basis). The order of $\deg(z)$ is $2$.

\medskip

Take a homogeneous basis as before. Then the degree of the odd central element $z$ is $h=h_1+\cdots+h_{2l+1}$ with $2h=0$. The even Clifford algebra $\Cl\subo(U,q)$ is generated by the homogeneous elements $zu_1,zv_1,\ldots,zu_m,zv_m,zw_1,\ldots,zw_{2l}$, and hence we may shift the degrees in $U$ by assigning a new degree $\widetilde{\deg}(u)=\deg(u)+h$ to any homogeneous $u\in U$. With this new grading $\widetilde{\deg}(z)=(2l+1)h+h=0$. Since $2h=0$ this does not change the degree of the homogeneous elements $zu$, as $\widetilde{\deg}(z)+\widetilde{\deg}(u)=\deg(z)+\deg(u)$, and hence it does not change the grading on the even Clifford algebra $\Cl\subo(U,q)$ (it only adds $h$ to the degree of the homogeneous odd elements!). In the new grading, the sum of the degrees of any homogeneous basis of $U$ is $0$.

In what follows we will consider gradings on $U$ with this property.

It must be remarked here that this grading cannot be shifted while preserving its properties: if we shift the grading, that is, if we add a fixed $g$ in some abelian group containing $g_1,\ldots,g_m,h_1,\ldots,h_{2l+1}$ to the degree of the elements $u_1,v_1,\ldots,u_m,v_m,w_1,\ldots,w_{2l+1}$, then we must have $2g=0$ (as $q$ has degree $0$) and $(2l+1)g=0$ (as the sum of the degrees of the elements of any homogeneous basis is $0$). We conclude that $g=0$, so the shift is trivial.

\smallskip

The Clifford algebra $\Cl(U,q)$ is endowed with the so called \emph{bar involution} determined by the condition $\bar u=-u$ for any $u\in U$. Its restriction to $\Cl\subo(U,q)$ will play a key role here.

\smallskip

To finish this subsection, note that for $u,v,w\in U$, we have
\[
\begin{split}
[[u,v],w]&=uvw-vuw-wuv+wvu\\
&=(q(v,w)u-uwv)-(q(u,w)v-vwu)\\
&\qquad -(q(w,u)v-uwv)+(q(w,v)u-vwu)\\
&=2\bigl(q(v,w)u-q(u,w)v\bigr).
\end{split}
\]
Since the orthogonal Lie algebra $\frso(U,q)$ is spanned by the maps $w\mapsto q(u,w)v-q(v,w)u$ for $u,v\in U$, it follows that the subspace of $\Cl\subo(U,q)$ spanned by the elements $[u,v]$, $u,v\in U$ is isomorphic to $\frso(U,q)$. Hence $\frso(U,q)$ embeds, as a graded subalgebra, in the Lie algebra $\Cl\subo(U,q)^-$ defined on the associative algebra $\Cl\subo(U,q)$ by means of the Lie bracket $[x,y]=xy-yx$. Denote by $\iota:\frso(U,q)\rightarrow \Cl\subo(U,q)$ this embedding.

\bigskip

\subsection{}\label{ss:Cayleyexample} Let us consider in this and the following subsection a couple of examples in dimension $7$ which will be instrumental in our work on the fine gradings on the exceptional simple classical Lie superalgebra $F(4)$.

Let $\cC$ be the Cayley algebra over $\bF$. This algebra $\cC$ has a basis $\{1,e_i: i=1,\ldots,7\}$ with multiplication given by $1x=x1=x$ for any $x$, and
\[
e_i^2=-1\ \forall i,\quad e_ie_{i+1}=-e_{i+1}e_i=e_{i+3}
\]
and cyclically on $e_i$, $e_{i+1}$ and $e_{i+3}$ (indices modulo $7$).
Besides, $\cC$ is endowed with a nondegenerate quadratic form $N$ (the \emph{norm}) such that the above basis is orthogonal and $N(1)=N(e_i)=1$ for any $i$. This quadratic form permits composition: $N(xy)=N(x)N(y)$ for any $x,y\in\cC$, and any $x\in\cC$ satisfies the degree $2$ equation $x^2-N(x,1)x+N(x)1=0$.

The Cayley algebra is $\bZ_2^3$-graded with $\deg(e_1)=(\bar 1,\bar 0,\bar 0)$, $\deg(e_2)=(\bar 0,\bar 1,\bar 0)$, and $\deg(e_3)=(\bar 0,\bar 0,\bar 1)$.

Let $\cC^0$ be the subspace of $\cC$ orthogonal to $1$, that is, the subspace spanned by the $e_i$'s, and consider the linear map given by left multiplication in $\cC$:
\[
\begin{split}
l:\cC^0&\rightarrow \End_{\bF}(\cC)\\
x&\mapsto l_x:\cC\rightarrow \cC,\ y\mapsto xy.
\end{split}
\]
Since $l_x^2=l_{x^2}=-N(x)1$ for any $x\in \cC_0$, $l$ induces a graded homomorphism $\Cl(\cC^0,-N)\rightarrow \End_\bF(\cC)$ which restricts, by dimension count, to a graded isomorphism $\Cl\subo(\cC^0,-N)\rightarrow \End_\bF(\cC)$.

This shows $\calI\bigl(\Cl\subo(\cC_0,-N)\bigr)=[\bF]$.

Moreover, the bar involution in $\Cl(\cC^0,-N)$ is determined by $\bar x=-x$ for any $x\in \cC^0$. Under the isomorphism above, it corresponds to the involution on $\End_\bF(\cC)$ such that $\overline{l_x}=-l_x$ for any $x\in\cC^0$. But $N(xy,z)=-N(y,xz)$ for any $x\in \cC^0$ and $y,z\in\cC$. Hence the bar involution corresponds to the orthogonal involution of $\End_\bF(\cC)$ induced by the norm $N$.

\bigskip

\subsection{}\label{ss:Quaternionexample} Let us consider now the $\bZ_2^2$-graded division algebra of split quaternions ${\cQ}$ considered in \textbf{\ref{ss:R}}.

Then ${\cQ}\otimes {\cQ}$ is a natural $\bZ_2^4$-graded right module for ${\cQ}$ by means of
\[
(x\otimes y)q=x\otimes (yq),\quad \deg(x\otimes y)=\bigl(\deg(x),\deg(y)\bigr)\in\bZ_2^2\times\bZ_2^2=\bZ_2^4.
\]
Thus ${\cQ}\otimes {\cQ}$ is a free right ${\cQ}$-module of rank $4$.

Consider the standard involution on ${\cQ}$ and the linear map given by
\[
\begin{split}
\Phi: {\cQ}\otimes {\cQ}\otimes {\cQ}&\longrightarrow \End_{\cQ}({\cQ}\otimes {\cQ})\\
a\otimes b\otimes c\ &\mapsto \Phi_{a\otimes b\otimes c}: x\otimes y\mapsto (ax\bar b)\otimes (cy).
\end{split}
\]
Then $\Phi$ is a homomorphism of algebras. But ${\cQ}\otimes {\cQ}\otimes {\cQ}\simeq \Mat_8(\bF)$ is simple, so by dimension count we conclude that $\Phi$ is an algebra isomorphism, and hence $\Phi$ becomes an isomorphism of $\bZ_2^4$-graded algebras, where the grading on $\End_{\cQ}({\cQ}\otimes {\cQ})$ is the one induced by the $\bZ_2^4$-grading on ${\cQ}\otimes {\cQ}$, and the grading on ${\cQ}\otimes {\cQ}\otimes {\cQ}$ is then given by
\[
\deg(a\otimes b\otimes c)=\bigl(\deg(a)+\deg(b),\deg(c)\bigr)\in\bZ_2^2\times\bZ_2^2=\bZ_2^4,
\]
for any homogeneous $a,b,c\in {\cQ}$.

\smallskip

Consider now the subspace $U$ of ${\cQ}\otimes {\cQ}\otimes {\cQ}$ spanned by the following elements:
\begin{equation}\label{eq:wis}
\begin{aligned}
w_1&=q_1\otimes 1\otimes 1,&\quad w_2&=q_3\otimes 1\otimes 1,\\
w_3&=q_2\otimes 1\otimes q_1,&w_4&=q_2\otimes 1\otimes q_3,\\
w_5&=q_2\otimes q_1\otimes q_2,&w_6&=q_2\otimes q_3\otimes q_2,\\
w_7&=q_2\otimes q_2\otimes q_2.&&
\end{aligned}
\end{equation}
Note that $w_iw_j=-w_jw_i$  for any $i\ne j$, and $w_i^2$ is either $1$ or $-1$ for any $i=1,\ldots,7$ in the algebra ${\cQ}\otimes {\cQ}\otimes {\cQ}$.

Endow $U$ with the nondegenerate quadratic form $q$ where $\{w_i: i=1,\ldots 7\}$ becomes an orthogonal basis and $w_i^2=q(w_i)1$ for any $i$. The subspace $U$ inherits the $\bZ_2^4$-grading on ${\cQ}\otimes {\cQ}\otimes {\cQ}$. Then the inclusion $U\hookrightarrow {\cQ}\otimes {\cQ}\otimes {\cQ}$ satisfies $u^2=q(u)1$ for any $u\in U$, so it induces a homomorphism of graded algebras $\Cl(U,q)\rightarrow {\cQ}\otimes {\cQ}\otimes {\cQ}$ which is the identity on $U$, and which restricts, by dimension count, to a graded isomorphism $\Cl\subo(U,q)\rightarrow {\cQ}\otimes {\cQ}\otimes {\cQ}$.

Note that the degrees of the basic elements are:
\[
\begin{aligned}
\deg(w_1)&=(\bar 1,\bar 0,\bar 0,\bar 0),&\quad
\deg(w_2)&=(\bar 1,\bar 1,\bar 0,\bar 0),\\
\deg(w_3)&=(\bar 0,\bar 1,\bar 1,\bar 0),&
\deg(w_4)&=(\bar 0,\bar 1,\bar 1,\bar 1),\\
\deg(w_5)&=(\bar 1,\bar 1,\bar 0,\bar 1),&
\deg(w_6)&=(\bar 1,\bar 0,\bar 0,\bar 1),\\
\deg(w_7)&=(\bar 0,\bar 0,\bar 0,\bar 1).&&
\end{aligned}
\]
Hence all these degrees are different, and they sum up to $0$.

The graded isomorphism $\Phi$ shows that $\calI\bigl(\Cl\subo(U,q)\bigr)=[{\cQ}]$ in this case.

Moreover, the bar involution on $\Cl(U,q)$ is determined by the fact that $\bar w_i=-w_i$ for any $i$, and therefore it corresponds to the involution on ${\cQ}\otimes {\cQ}\otimes {\cQ}$ given by
\[
\overline{a\otimes b\otimes c}=\bar a\otimes \bar b\otimes q_2^{-1}\bar c q_2,
\]
for any $a,b,c\in {\cQ}$. (Note that $q_2^{-1}\bar q_2q_2=-q_2$, while $q_2^{-1}\bar q_iq_2=-q_2^{-1}q_iq_2=q_i$ for $i=1,3$.)

Under the isomorphism $\Phi$, this bar involution corresponds to the adjoint relative to the skew-hermitian form
\begin{equation}\label{eq:h}
\begin{split}
h:({\cQ}\otimes {\cQ})\times({\cQ}\otimes {\cQ})&\longrightarrow {\cQ}\\
\bigl( x\otimes y, u\otimes v\bigr)\,&\mapsto \bar yq_2v N(x,u).
\end{split}
\end{equation}
(Note that indeed $h\bigl(x\otimes y,(u\otimes v)q\bigr)=h\bigl(x\otimes y,u\otimes v\bigr)q$, and $h\bigl(x\otimes y,u\otimes v\bigr)=\bar yq_2v N(x,u)=-\overline{\bar v q_2 y N(u,x)}=-\overline{h\bigl(u\otimes v,x\otimes y\bigr)}$ for any $x,y,u,v,q\in {\cQ}$, since $\bar q_2=-q_2$.)

Actually, for any $a,b,c,x,y,u,v\in {\cQ}$:
\[
\begin{split}
h\Bigl(\Phi_{a\otimes b\otimes c}(x\otimes y),u\otimes v\Bigr)&=
  h\Bigl((ax\bar b)\otimes (cy), u\otimes v\Bigr)\\
  &=\bar y\bar c q_2v N(ax\bar b,u)\\
  &=\bar y q_2(q_2^{-1}\bar c q_2)v N(x,\bar a u b)\\
  &=h\Bigl(x\otimes y,(\bar a u b)\otimes (q_2^{-1}\bar cq_2)v\Bigr)\\
  &=h\Bigl(x\otimes y,\Phi_{\overline{a\otimes b\otimes c}}(u\otimes v)\Bigr).
\end{split}
\]

\bigskip

\subsection{}\label{ss:dim7}
In this subsection we will determine the possible isomorphism classes $\calI({\cR})$ for the graded algebra ${\cR}=\Cl\subo(U,q)$ for a seven dimensional vector space $U$. As in Subsection~\textbf{\ref{ss:U}} we take a homogeneous basis
$\{u_1,v_1,\ldots,u_m,v_m,w_1,\ldots,w_{7-2m}\}$, where $0\leq m\leq 3$, with $q(u_i,v_i)=1$ and $\deg(u_i)=g_i=-\deg(v_i)$ for $1\leq i\leq m$, $q(w_j)=1$ and $\deg(w_j)=h_j$, $2h_j=0$, for $1\leq j\leq 7-2m$, with the added condition of $h_1+\dotsm+h_{7-2m}=0$.

Moreover, we will assume that the elements $h_1,\ldots,h_{7-2m}$ are all different. Otherwise, if for instance $h_1=h_2$, substitute $w_1$ and $w_2$ by $u_{m+1}=\tfrac{1}{2}(w_1+\sqrt{-1}w_2)$, $v_{m+1}=\tfrac{1}{2}(w_1-\sqrt{-1}w_2)$, to get a new homogeneous basis where $m$ is increased by $1$.

Let us consider the different possibilities:

\begin{description}
\item[\fbox{$m=3$}] In this case a repeated application of Lemma \ref{le:Uuv} shows that $\Cl\subo(U,q)$ is isomorphic (as a graded algebra) to $\End_\bF(W_1)\otimes\End_\bF(W_2)\otimes\End_\bF(W_3)=\End_\bF(W)$ with $W=W_1\otimes W_2\otimes W_3$ a graded eight-dimensional vector space. Hence $\calI\bigl(\Cl\subo(U,q)\bigr)=[\bF]$ in this case, and note that the $G$-grading on $U$ can be refined to a $\bZ^3$-grading (substituting $g_1$, $g_2$ and $g_3$ by three free generators) with the same properties.

\item[\fbox{$m=2$}] The fact that $h_1$, $h_2$ and $h_3$ are different and sum up to $0$ force that $h_1$ and $h_2$ are nonzero and $h_3=h_1+h_2$. Then Lemma \ref{le:Uuv} shows that $\Cl\subo(U,q)$ is isomorphic, as a graded algebra, to $\End_\bF(W)\otimes \Cl\subo(U',q)$, where $W$ is a graded space of dimension $4$ and $U'$ is the subspace spanned by $w_1,w_2,w_3$. But this latter subalgebra is $\bZ_2^2$-graded ($h_1$ and $h_2$ being the generators of $\bZ_2^2$) with all the homogeneous spaces of dimension $1$, and hence it is a graded division algebra isomorphic to ${\cQ}$. Therefore $\calI\bigl(\Cl\subo(U,q)\bigr)=[{\cQ}]$ here, and note that the grading can be refined to a $\bZ^2\times\bZ_2^2$-grading with the same properties.

\item[\fbox{$m=1$}] Note that the subgroup $H$ generated by the $h_i$'s is a direct sum of copies of $\bZ_2$. The fact that the $h_i$'s are different and its sum is $0$ forces that the rank of $H$, as a vector space over $\bZ_2$, be either $3$ or $4$.

    If the rank is $3$, we may reorder our basis $\{u_1,v_1,w_1,w_2,w_3,w_4,w_5\}$ so that $h_1$, $h_2$ and $h_3$ are $\bZ_2$-linearly independent ($h_i=\deg(w_i)$ for any $i$). Then necessarily, up to the order of $w_4$ and $w_5$, we have $h_4=h_1+h_2+h_3$ and $h_5=0$. By Lemma \ref{le:Uuv}, $\Cl\subo(U,q)$ is isomorphic to $\End_\bF(W)\otimes\Cl\subo(U',q)$, for a two-dimensional graded vector space $W$, where $U'$ is the span of the $w_i$'s. Note that $\Cl\subo(U',q)$ is $\bZ_2^3$-graded (with $h_1,h_2,h_3$ the generators of $\bZ_2^3$). The component $\Cl\subo(U',q)_0$ is spanned by $1$ and $w_1w_2w_3w_4$, and hence $\Cl\subo(U',q)$ is not a graded division algebra, but it is neither isomorphic, as a graded algebra, to $\End_\bF(W')$ for a graded vector space $W'$, since here the homogeneous component of degree $0$ is at least four-dimensional. We conclude that $\calI\bigl(\Cl\subo(U,q)\bigr)=\calI\bigl(\Cl\subo(U',q)\bigr)=[{\cQ}]$ in this case. Again the grading on $U$ can be refined to a $\bZ\times\bZ_2^3$-grading.

    However, if the rank is $4$, then we may reorder the elements of our basis so that $h_1,h_2,h_3,h_4$ are linearly independent. The only possibility left for $h_5$ is $h_5=h_1+h_2+h_3+h_4$. By Lemma \ref{le:Uuv}, $\Cl\subo(U,q)$ is isomorphic to $\End_\bF(W)\otimes\Cl\subo(U',q)$, and $\Cl\subo(U',q)$ is a $\bZ_2^4$-graded division algebra (and thus isomorphic to ${\cQ}\otimes {\cQ}$ by Proposition \ref{pr:graded_division}). We conclude here that $\calI\bigl(\Cl\subo(U,q)\bigr)=\calI\bigl(\Cl\subo(U',q)\bigr)=[{\cQ}\otimes {\cQ}]$.

\smallskip

\item[\fbox{$m=0$}] Here the support of the grading group is generated by the $h_i$'s, and hence it is isomorphic to $\bZ_2^r$ for some $r$. The fact that the $h_i$'s are different, its sum being $0$, forces $3\leq r\leq 6$.

    If $r=6$, $\Cl\subo(U,q)=\alg{zw_1,zw_2,\ldots,zw_6}$ is a graded division algebra, and $\calI\bigl(\Cl\subo(U,q)\bigr)=[{\cQ}\otimes {\cQ}\otimes {\cQ}]$.

    If $r=5$ we may assume that $h_1,\ldots,h_5$ are linearly independent over $\bZ_2$ and we have, up to a reordering of the elements in the basis, two possibilities: either $h_6=h_1+\dotsm+h_5$ and $h_7=0$, or $h_6=h_1+h_2$ and $h_7=h_3+h_4+h_5$. In the first case, the homogeneous subspace of degree $0$ in $\Cl\subo(U,q)$ is spanned by $1$ and $zw_7$, while in the second case it is spanned by $1$ and $w_3w_4w_5w_7$. Arguing as before we conclude that $\calI\bigl(\Cl\subo(U,q)\bigr)=[{\cQ}\otimes {\cQ}]$.

    Assume now that $r=4$. Then, up to a reordering of the basic elements, we can take $h_1,h_2,h_3,h_4$ linearly independent and either $h_5=h_1+h_2$, $h_6=h_3+h_4$, and $h_7=0$, or $h_5=h_1+h_2$, $h_6=h_1+h_3$, and $h_7=h_1+h_4$. In the first case the homogeneous component of degree $0$ in $\Cl\subo(U,q)$ is spanned by $1$, $zw_1w_2w_5$, $zw_3w_4w_6$ and $zw_7$, and hence this degree $0$ component is isomorphic to $\Mat_2(\bF)$, which contains two orthogonal nonzero idempotents whose sum is $1$ but cannot contain four such idempotents. We conclude then that $\calI\bigl(\Cl\subo(U,q)\bigr)=[{\cQ}\otimes {\cQ}]$ holds.
    In the second case we are in the situation of Subsection~\textbf{\ref{ss:Quaternionexample}}, with $h_1=\deg(w_7)$, $h_2=\deg(w_1)$, $h_3=\deg(w_2)$ and $h_4=\deg(w_4)$ in \eqref{eq:wis}, and hence $\calI\bigl(\Cl\subo(U,q)\bigr)=[{\cQ}]$.

    Finally, for $r=3$ and up to a reordering of the basic elements, we can take $h_1,h_2,h_3$ linearly independent, $h_4=h_1+h_2$, $h_5=h_2+h_3$, $h_6=h_1+h_3$ and $h_7=h_1+h_2+h_3$. This is precisely the situation in \textbf{\ref{ss:Cayleyexample}}, and hence $\calI\bigl(\Cl\subo(U,q)\bigr)=[\bF]$.

\end{description}

\bigskip

\section{Some models of the exceptional simple classical Lie superalgebra $F(4)$}\label{se:modelsF(4)}

Let $\frg=\frg\subo\oplus\frg\subuno$ be the simple Lie superalgebra of type $F(4)$. Then $\frg\subo$ is the direct sum of two proper simple ideals:
$\frg\subo=\fra_1\oplus\frb_3$, where $\fra_1$ is simple of type $A_1$ and $\frb_3$ is simple of type $B_3$; while $\frg\subuno$ is, as a module for $\frg\subo$, the tensor product of the two-dimensional irreducible module for $\fra_1$ and the spin eight-dimensional module for $\frb_3$. There is a unique, up to scalars, nonzero $\frg\subo$-invariant map $\frg\subuno\otimes\frg\subuno\rightarrow \frg\subo$ (the bracket of odd elements) satisfying that $\frg$ is a Lie superalgebra (different scalars give isomorphic superalgebras).

\bigskip

\subsection{$F(4)$ and the Cayley algebra}\label{ss:F(4)Cayley}

The ideal $\frb_3$ of $\frg\subo$ can be identified with the orthogonal Lie algebra $\frso(\cC^0,-N)$, and the spin module with $\cC$ for the Cayley algebra $\cC$ as in Subsection~\textbf{\ref{ss:Cayleyexample}}. Then
\[
\frg\subo=\frsp(V)\oplus \frso(\cC^0,-N),\qquad
\frg\subuno=V\otimes\cC,
\]
for a two-dimensional vector space $V$ endowed with a nonzero symplectic form $(u\vert v)$. Take a basis $\{v_1,v_{-1}\}$ of $V$ with $(v_1\vert v_{-1})=1$ and grade $V$ over $\bZ$ with $\deg(v_{\pm 1})=\pm 1$. This grades $\fra_1=\frsp(V)\cong\frsl(V)$ over $\bZ$ with degrees $0,\pm 2$. On the other hand, the $\bZ_2^3$-grading on $\cC$ in \textbf{\ref{ss:Cayleyexample}} is compatible with the norm ($N(\cC_g^0,\cC_h^0)=0$ unless $g+h=0$), so it induces a $\bZ_2^3$-grading on $\frb_3=\frso(\cC^0,-N)$ of type $(0,0,7)$, with the zero component being trivial, and the remaining seven homogeneous components being Cartan subalgebras of $\frso(\cC^0,-N)$.

Combining these gradings we obtain a $\bZ\times\bZ_2^3$-grading on $F(4)$ whose type is $(19,0,7)$.

\bigskip

\subsection{$F(4)$ and the Kac Jordan superalgebra}\label{ss:F(4)Kac}

Given any Jordan superalgebra $\cJ$, its \emph{Tits-Kantor-Koecher Lie superalgebra} is the Lie superalgebra:
\[
\tkk(\cJ)=\Bigl(\cQ^0\otimes\cJ\Bigr)\oplus\der(\cJ),
\]
where $\cQ$ is our quaternion algebra, with bracket determined by $\der(\cJ)$ being a subalgebra, and by:
\[
\begin{split}
[a\otimes x,b\otimes y]&=[a,b]\otimes xy\, - 2N(a,b)[L_x,L_y],\\
[d,a\otimes x]&=a\otimes d(x),
\end{split}
\]
for any $a,b\in\cQ^0$, $x,y\in\cJ$, and $d\in \der(\cJ)$, where $L_x$ denotes the left multiplication by $x$. This construction, in this form, goes back to \cite{Tits62} (in the ungraded case).

\smallskip

The ten-dimensional Kac Jordan superalgebra $\cK_{10}$ is a simple Jordan superalgebra discovered in \cite{KacJordan}, and which is easily described in terms of the smaller Kaplansky superalgebra $\cK_3$ (see \cite{GeorgiaAlberto}). The tiny Kaplansky superalgebra  is the three-dimensional Jordan superalgebra
\[
\cK_3=(\cK_3)\subo\oplus(\cK_3)\subuno
\]
with $(\cK_3)\subo=\bF e$ and $(\cK_3)\subuno=V$, where $V$ is a two-dimensional vector space endowed with a nonzero symplectic form $(u\vert v)$. The multiplication in $\cK_3$ is given by
\[
e^2=e,\qquad ex=xe=\frac12x,\qquad xy=(x\vert y)e,
\]
for any $x,y\in V$. The bilinear form is extended to a supersymmetric bilinear form of $\cK_3$ by means of $(e\vert e)=\frac12$ and $\bigl((\cK_3)\subo\vert
(\cK_3)\subuno\bigr)=0$. Its Lie superalgebra of derivations $\der(\cK_3)$ is just the orthosymplectic Lie superalgebra $\frosp(\cK_3)$ relative to the supersymmetric bilinear form above. In particular $\der(\cK_3)\subo\simeq\frsp(V)$, the symplectic Lie algebra of $V$, acting trivially on $(\cK_3)\subo=\bF e$.

Now in \cite{GeorgiaAlberto} it is shown that the Kac superalgebra $\cK_{10}$ appears as:
\[
\cK_{10}=\bF 1\oplus \bigl(\cK_3\otimes \cK_3\bigr)
\]
with unit element $1$ and with product determined by
\[
(a\otimes b)(c\otimes d)=(-1)^{\bar b\bar c}\left( ac\otimes bd-\frac34 (a\vert c)(b\vert d)1\right)
\]
for homogeneous elements $a,b,c,d\in \cK_3$.

The Lie superalgebra of derivations is $\der(\cK_{10})=(\der(\cK_3)\otimes id) \oplus (id\otimes \der(\cK_3))$ (with trivial action on $1$).

\smallskip

There are two fine gradings on $\cK_{10}$ (see \cite{AntonioCristinaCandidoK10}) which can be described as follows. First take a symplectic basis $\{v_1,v_{-1}\}$ of $V=(\cK_3)\subuno$, so $(v_1\vert v_{-1})=1$. With $\deg(e)=0$, $\deg(v_{\pm 1})=\pm 1$ we obtain a $\bZ$-grading on $\cK_3$ and hence a $\bZ^2$-grading on $\cK_{10}=\bF 1\oplus\bigl(\cK_3\otimes \cK_3\bigr)$, where each copy of $\cK_3$ is graded independently over $\bZ$. That is, $\deg(1)=(0,0)$ and $\deg(x\otimes y)=(\deg(x),\deg(y))$ for homogeneous elements $x,y\in\cK_3$. The type of this grading is $(8,1)$.

On the other hand, there is a natural order $2$ automorphism $\tau$ of $\cK_{10}$ with $\tau(1)=1$ and $\tau(x\otimes y)=(-1)^{\bar x\bar y}y\otimes x$ for homogeneous elements $x,y\in\cK_3$. Then $\cK_{10}$ is $\bZ\times \bZ_2$-graded as follows: first the $\bZ$-grading above on $\cK_3$ determines a $\bZ$-grading on $\cK_{10}$ with $\deg(1)=0$, $\deg(x\otimes y)=\deg(x)+\deg(y)$ for homogeneous elements $x,y\in\cK_3$, and this is refined by means of $\tau$:
\[
(\cK_{10})_{(n,\bar 0)}=\{z\in(\cK_{10})_n: \tau(z)=z\},\quad
(\cK_{10})_{(n,\bar 1)}=\{z\in(\cK_{10})_n: \tau(z)=-z\}.
\]
The homogeneous component of degree $0$ is $\bF 1\oplus\bF (e\otimes e)\oplus \bF(v_1\otimes v_{-1}-v_{-1}\otimes v_1)$, while the dimension of all the other homogeneous components is $1$. Hence the type is $(7,0,1)$.

\smallskip

The even part $(\cK_{10})\subo$ is the direct sum of two simple ideals (of respective dimensions $1$ and $5$):
\[
(\cK_{10})\subo=\bF E_1\oplus\Bigl(\bF E_2\oplus (V\otimes V)\Bigr),
\]
where $E_1=-\frac12 1+2e\otimes e$ and $E_2=\frac32 1-2e\otimes e$ are orthogonal idempotents, and we have $E_2(a\otimes b)=a\otimes b$ and $(a\otimes b)(c\otimes d)=\frac12 (a\vert c)(b\vert d)E_2$ for any $a,b,c,d\in V$. On the other hand, the even part of $\der(\cK_{10})$ is, up to natural identifications,
\[
\der(\cK_{10})\subo=(\der(\cK_3)\subo\otimes id) \oplus (id\otimes \der(\cK_3)\subo)=(\frsp(V)\otimes id) \oplus (id\otimes \frsp(V)),
\]
and note that $(\frsp(V)\otimes id)\oplus (id\otimes \frsp(V))$ is naturally isomorphic to the orthogonal Lie algebra $\frso(V\otimes V,Q)$, where $Q$ is the quadratic form whose associated polar form is given by $Q(a\otimes b,c\otimes d)=(a\vert c)(b\vert d)$ for any $a,b,c,d\in V=(\cK_3)\subuno$.

Extend the quadratic form $Q$ on $V\otimes V$ to the vector space $U=\cQ^0\oplus (V\otimes V)$ by imposing that $\cQ^0$ and $V\otimes V$ are orthogonal and with the restriction of $Q$ to $\cQ^0$ equal to the norm $N$.

\smallskip

The Tits-Kantor-Koecher Lie superalgebra $\tkk(\cK_{10})$ is $F(4)$ (this is how $\cK_{10}$ was discovered in \cite{KacJordan}). Note that
the even part of the Tits-Kantor-Koecher Lie superalgebra $\tkk(\cK_{10})$ is the sum of two orthogonal ideals:
\[
\begin{split}
\tkk(\cK_{10})\subo&=\cQ^0\otimes E_1\,\oplus\Bigl( \cQ^0\otimes \bigl(\bF E_2\oplus (V\otimes V)\bigr)\oplus \der(\cK_{10})\subo\Bigr)\\
 &=\cQ^0\otimes E_1\,\oplus\Bigl( \cQ^0\otimes \bigl(\bF E_2\oplus (V\otimes V)\bigr)\oplus (\frsp(V)\otimes id) \oplus (id\otimes \frsp(V))\Bigr),
\end{split}
\]
the first ideal $\cQ^0\otimes E_1$ being isomorphic to $\cQ^0$ (the simple Lie algebra of type $A_1$). As for the second ideal, the arguments in \cite[pp. 355-356]{Eld.NewSimple} prove the next result:

\begin{lemma}\label{le:tkkK10bar0}
The linear map
\[
\begin{split}
\cQ^0\otimes \bigl(\bF E_2\oplus (V\otimes V)\bigr)\oplus\der(\cK_{10})\subo &\longrightarrow \frso(U,Q)\\
p\otimes E_2\quad&\mapsto\ \ad_p(:q\mapsto [p,q])\in\frso(\cQ^0,N)\\
&\qquad\qquad \text{(which is contained in $\frso(U,Q)$)},\\[2pt]
p\otimes (u\otimes v)\quad&\mapsto\ Q(p,.)u\otimes v-Q(u\otimes v,.)p\in\frso(U,Q),\\[2pt]
d\quad &\mapsto\ d\vert_{V\otimes V}\in\frso(V\otimes V,Q)\subseteq \frso(U,Q),
\end{split}
\]
for $p\in \cQ^0$, $d\in \der(\cK_{10})\subo$ and $u,v\in V=(\cK_3)\subuno$, is a Lie algebra isomorphism.
\end{lemma}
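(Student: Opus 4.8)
The plan is to check directly that the displayed linear map, which I will write $\Psi$, is a well-defined bijection and a homomorphism of Lie algebras. Set $W=V\otimes V$, so that $U=\cQ^0\oplus W$ orthogonally with $Q\vert_{\cQ^0}=N$, and for $a,b\in U$ let $R_{a,b}\in\frso(U,Q)$ be the map $x\mapsto Q(a,x)b-Q(b,x)a$. I will use throughout that the $R_{a,b}$ span $\frso(U,Q)$, that $[T,R_{a,b}]=R_{Ta,b}+R_{a,Tb}$ for any $T\in\frso(U,Q)$, and the standard bracket relation $[R_{a,b},R_{c,d}]=-Q(b,c)R_{a,d}+Q(a,c)R_{b,d}+Q(b,d)R_{a,c}-Q(a,d)R_{b,c}$.

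First I would show $\Psi$ is a well-defined linear isomorphism. It lands in $\frso(U,Q)$: each $\ad_p$ ($p\in\cQ^0$) preserves $N$, the norm form of $\cQ^0$ being $\cQ^0$-invariant; the map $x\mapsto Q(p,x)(u\otimes v)-Q(u\otimes v,x)p$ is exactly $R_{p,\,u\otimes v}$; and $d\vert_W$ is skew for $Q$ because $d\mapsto d\vert_W$ is the natural isomorphism $\der(\cK_{10})\subo\cong\frso(W,Q)$ recalled just before the statement. The three summands of the source are carried, respectively, into the maps of $\frso(U,Q)$ killing $W$, those killing $\cQ^0$, and those exchanging $\cQ^0$ and $W$; these three subspaces are complementary in $\frso(U,Q)$ and have dimensions $3$, $6$, $12$. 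Since $\ad$ is injective on $\cQ^0$ (which has trivial center), $\der(\cK_{10})\subo\cong\frso(W,Q)$ is an isomorphism, and $p\otimes(u\otimes v)\mapsto R_{p,\,u\otimes v}$ is injective onto the whole $12$-dimensional third subspace, $\Psi$ is a linear isomorphism.

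Next I would verify $[\Psi\xi,\Psi\eta]=\Psi[\xi,\eta]$ for $\xi,\eta$ running over the three summands. For $(\der,\der)$ this is the homomorphism property of $d\mapsto d\vert_W$; for $(\cQ^0\otimes E_2,\cQ^0\otimes E_2)$ it follows from $E_2^2=E_2$, $[L_{E_2},L_{E_2}]=0$ and $[\ad_p,\ad_{p'}]=\ad_{[p,p']}$; for $(\der,\cQ^0\otimes E_2)$ both sides vanish (on the left since $d(E_2)=0$, as $d$ kills $1$ and $e\otimes e$; on the right since the two images act on complementary blocks of $U$). For $(\der,\cQ^0\otimes(V\otimes V))$ one uses $[d,p\otimes w]=p\otimes d(w)$ together with $[\Psi d,R_{p,w}]=R_{p,\,d(w)}$; for $(\cQ^0\otimes E_2,\cQ^0\otimes(V\otimes V))$ one uses $E_2 w=w$, the identity $[L_{E_2},L_w]=0$ (a short computation on the Peirce decomposition of the idempotent $E_2$, using $e\otimes e=\tfrac34\,1-\tfrac12 E_2$), and $[\ad_p,R_{p',w}]=R_{[p,p'],w}$. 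The only case with genuine content is $(\cQ^0\otimes(V\otimes V),\cQ^0\otimes(V\otimes V))$: writing $w=u\otimes v$, $w'=u'\otimes v'$, the Tits--Kantor--Koecher bracket gives
\[
[p\otimes w,p'\otimes w']=[p,p']\otimes(ww')-2N(p,p')[L_w,L_{w'}],
\]
and using $ww'=\tfrac12 Q(w,w')E_2$ in $\cK_{10}$, the relation $\ad_{[p,p']}=2R_{p,p'}$, and $[L_w,L_{w'}]\vert_W=-\tfrac12 R_{w,w'}\vert_W$, the $\Psi$-image of the right-hand side is $Q(w,w')R_{p,p'}+N(p,p')R_{w,w'}$. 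On the other hand, $Q(w,p')=Q(p,w')=0$ by orthogonality of $\cQ^0$ and $W$, so the bracket relation for the $R$'s yields $[R_{p,w},R_{p',w'}]=N(p,p')R_{w,w'}+Q(w,w')R_{p,p'}$, which agrees. Hence $\Psi$ is a Lie homomorphism, and, being bijective, an isomorphism.

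The hard part is this last case: one must establish the explicit identity $[L_w,L_{w'}]\vert_W=-\tfrac12 R_{w,w'}\vert_W$ inside $\cK_{10}$ and then reconcile the three competing normalizations — the $\tfrac12$ in the product of $\cK_{10}$, the $-2N$ in the Tits--Kantor--Koecher bracket, and the $2$ in $\ad_{[p,p']}=2R_{p,p'}$ — so that the two sides agree exactly; the remaining cases are formal. Conceptually the lemma is the instance $n=4$ of the general isomorphism $\tkk\bigl(\cJ(Q_n)\bigr)\cong\frso_{n+3}$ for the Jordan algebra of a nondegenerate quadratic form, with $\cJ(Q_4)=\bF E_2\oplus(V\otimes V)$ and $U=\cQ^0\oplus(V\otimes V)$, and one could alternatively deduce it from that, as is done in \cite{Eld.NewSimple}.
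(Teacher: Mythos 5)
Your argument is correct and more explicit than what the paper offers: at this point the paper gives no proof of its own, simply deferring to the arguments on pp.~355--356 of \cite{Eld.NewSimple} --- the general Tits--Kantor--Koecher isomorphism for the Jordan algebra of a nondegenerate quadratic form, which you yourself name as the conceptual source at the end. Your case-by-case verification is a valid self-contained alternative, and the normalizations you flag do reconcile. The crux, $[L_w,L_{w'}]\vert_W=-\tfrac12 R_{w,w'}\vert_W$, follows in one line from $wz=\tfrac12 Q(w,z)E_2$ and $E_2w=w$: for $z\in W$, $(L_wL_{w'}-L_{w'}L_w)(z)=\tfrac12 Q(w',z)w-\tfrac12 Q(w,z)w'=-\tfrac12 R_{w,w'}(z)$. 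The companion identity $\ad_{[p,p']}=2R_{p,p'}$ on $\cQ^0$ comes from $pq+qp=-N(p,q)1$ (the sign here is \emph{opposite} to the Clifford computation at the end of Subsection~\textbf{\ref{ss:U}}, where $u^2=q(u)1$, because here $p^2=-N(p)1$). Plugging these in, $\Psi\bigl([p\otimes w,p'\otimes w']\bigr)=\tfrac12 Q(w,w')\ad_{[p,p']}-2N(p,p')[L_w,L_{w'}]\vert_W=Q(w,w')R_{p,p'}+N(p,p')R_{w,w'}$, which agrees with $[R_{p,w},R_{p',w'}]$ via the $R$-bracket rule and the orthogonality $Q(p,w')=Q(w,p')=0$, exactly as you assert. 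Two cosmetic points: the sentence identifying the three blocks of $\frso(U,Q)$ lists the last two in the opposite order to the source summands (the $12$-dimensional ``exchange'' block is the image of $\cQ^0\otimes(V\otimes V)$, and the $6$-dimensional $\frso(W,Q)$ is the image of $\der(\cK_{10})\subo$), though the count $3+12+6=21$ is unaffected; and for $[L_{E_2},L_w]$ you only ever need its restriction to $W$, and $L_{E_2}L_w(z)=\tfrac12Q(w,z)E_2=L_wL_{E_2}(z)$ for $z\in W$ already gives $\Psi\bigl([L_{E_2},L_w]\bigr)=0$ without invoking the Peirce decomposition.
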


\smallskip

Note then that if the quaternion algebra $\cQ$ is graded over an abelian group $G$ and the Kac superalgebra $\cK_{10}$ is graded over an abelian group $H$, then $\tkk(\cK_{10})$ is naturally $G\times H$ graded, the vector space $U=\cQ^0\oplus (V\otimes V)$ is $G\times H$-graded too in a way compatible with the quadratic form $Q$, and hence it induces a $G\times H$-grading on $\frso(U,Q)$, such that the isomorphism in Lemma \ref{le:tkkK10bar0} is a graded isomorphism.

For the $\bZ_2^2$-grading on $\cQ$ and the $\bZ^2$-grading on $\cK_{10}$, $U$ is $\bZ^2\times \bZ_2^2$-graded and we are in the situation of Subsection~\textbf{\ref{ss:dim7}} with $m=2$. We obtain a $\bZ^2\times \bZ_2^2$-grading on $F(4)=\tkk(\cK_{10})$ of type $(32,4)$.

On the other hand, the $\bZ_2^2$-grading on $\cQ$ combined with the $\bZ\times \bZ_2$-grading on $\cK_{10}$ gives a $\bZ\times\bZ_2^3$-grading on $U$ and we are here in the situation of Subsection~\textbf{\ref{ss:dim7}} with $m=1$ and $\calI\bigl(\Cl\subo(U,Q)\bigr)=[\cQ]$. We obtain a $\bZ\times \bZ_2^3$-grading on $F(4)=\tkk(\cK_{10})$ of type $(31,0,3)$.

\bigskip


\section{Fine gradings on the exceptional simple classical Lie superalgebra $F(4)$}\label{se:gradingsF(4)}

Let $\frg=\frg\subo\oplus\frg\subuno$ be the simple Lie superalgebra of type $F(4)$, and let $\Gamma:\frg=\oplus_{g\in G}\frg_g$ be a fine grading. Let $D=\Diag_\Gamma(\frg)$ be the diagonal group of the grading. As explained in Section~\ref{se:Introduction}, $D$ is the group of automorphisms $\varphi$ of $\frg$ such that for any $g\in G$ there is a nonzero scalar $\alpha_g\in \bF$ such that the restriction of $\varphi$ to $\frg_g$ is the multiplication by the scalar $\alpha_g$. This diagonal group is a maximal abelian diagonalizable  subgroup of $\Aut\frg$. The grading is determined by the action of $D$, the homogeneous components being the common eigenspaces for the elements of $D$.

\subsection{The action of $\frg\subo$ on $\frg\subuno$}\label{ss:g0ong1} Recall that the even part of $\frg$ is the direct sum of its two proper ideals: $\frg\subo=\fra_1\oplus\frb_3$. Since these are the only proper ideals of $\frg\subo$, our grading $\Gamma$ (given by the action of the diagonal group $D$) induces a grading (not necessarily fine) on both $\fra_1$ and $\frb_3$.

Identify $\fra_1$ with $\frsl_2(\bF)$, that is, with the Lie algebra ${\cQ}^0$ of zero trace elements of ${\cQ}=\Mat_2(\bF)$. The well known results on gradings on $\fra_1$ (see \cite{Eld.Fine} and references therein) show that either there is a diagonalizable element $h$ of degree $0$ (that is, fixed by all the elements of $D$), or $\fra_1$ is $\bZ_2^2$-graded (there is a subgroup $H$ isomorphic to $\bZ_2^2$ of the grading group $G$ which grades $\fra_1$).

As a module for $\frg\subo$, the odd part $\frg\subuno$ is the tensor product $\frg\subuno =V\otimes W$ of the two-dimensional irreducible module $V$ for $\fra_1$ and the spin eight-dimensional module $W$ for $\frb_3$. Note that the action of $\fra_1$ on $V$ satisfies $x(xv)=x^2v=-\det(x)v$ for any $x\in
\fra_1\simeq\frsl_2(\bF)$ and any $v\in V$.

But given any representation $\rho:\fra_1\rightarrow \frgl(M)$, $x\mapsto \rho_x$, of our Lie algebra $\fra_1$ such that $\rho_x^2=-\det(x)1$ for any $x\in \fra_1$, the module $M$ becomes a right ${\cQ}$-module (${\cQ}=\Mat_2(\bF)$) as follows:
\begin{equation}\label{eq:rightQmodule}
m(\alpha 1+x)=\alpha m-\rho_x(m),
\end{equation}
for any $\alpha\in \bF$, $x\in \fra_1={\cQ}^0$ and $m\in M$.

This is what happens for $M=\frg\subuno$ as a module for $\fra_1$.

\smallskip

If, in addition, $\fra_1$ is $\bZ_2^2$-graded (grading induced by the $\bZ_2^2$-grading on ${\cQ}$), and $M$ is a graded module for $\fra_1$ (that is, $M=\oplus_{g\in G}M_g$ with $G$ a group containing a copy $H$ of $\bZ_2^2$, and $\rho_{x_h}(m_g)\in M_{h+g}$ for any $h\in H$, $g\in G$, $x_h\in (\fra_1)_h$ and $m_g\in M_g$) then, since ${\cQ}$ is a graded division algebra, $M$ becomes a free right ${\cQ}$-module.

\smallskip

The action of $\frb_3$ on $\frg\subuno$ commutes with the action of $\fra_1$, and hence with the right action of ${\cQ}$. Therefore $\frb_3$ embeds in $\End_{\cQ}(\frg\subuno)$ as a graded Lie subalgebra.

On the other hand, the subalgebra $\frb_3$ is isomorphic to the orthogonal Lie algebra $\frso(U,q)$ for a seven-dimensional vector space $U$ endowed with a nondegenerate quadratic form $q$. The results in \cite{Eld.Fine} show that any grading on $\frso(U,q)$ is determined by a grading on the vector space $U$ (completely determined up to a shifting) compatible with the quadratic form. Hence the grading on the superalgebra $\frg$ determines a unique grading on $U$ like the ones considered in \textbf{\ref{ss:U}}.

The action of $\frb_3$ on $\frg\subuno$ factors through the even Clifford algebra (see \cite{Eld.QuatOctForms})
\begin{equation}\label{eq:iotaPhi}
\frb_3\,\stackrel{\iota}{\hookrightarrow}\,
\Cl\subo(U,q)\xrightarrow{\Phi}\End_{\cQ}(\frg\subuno).
\end{equation}
And by dimension count $\Cl\subo(U,q)$ is isomorphic to $\End_{\cQ}(\frg\subuno)$ through $\Phi$.

Since the gradings on $\frb_3$ and on $\Cl\subo(U,q)$ are determined by the grading on $U$, the embedding $\iota$ preserves these gradings. The fact that $\frg\subuno$ is a graded module for $\frb_3$ and that (the image of) $\frb_3$ generates $\Cl\subo(U,q)$ shows that $\Phi$ is a graded isomorphism.

\bigskip

\subsection{}\label{ss:F(4)toralenA1} We are left with two possibilities according to the grading induced on $\fra_1$. In this subsection we will consider the first case: there exists a diagonalizable element $0\ne h\in(\fra_1)_0$. After scaling if necessary we may assume that, once $\fra_1$ is identified to $\frsl_2(\bF)$, $h$ is the element $\left(\begin{smallmatrix} 1&0\\ 0&-1\end{smallmatrix}\right)$.

The eigenvectors of the action of $h$ in $\frg\subo$ are $2,0,-2$, while those in $\frg\subuno$ are $1,-1$. The eigenspaces are graded subspaces so our fine grading $\Gamma$ is a refinement of the $\bZ$-grading where $\deg(h)=0$, $\deg(e)=2=-\deg(f)$ with $e\leftrightarrow\left(\begin{smallmatrix} 0&1\\0&0\end{smallmatrix}\right)$ and $f\leftrightarrow \left(\begin{smallmatrix} 0&0\\ 1&0\end{smallmatrix}\right)$. The module $W$ for $\frb_3$ can be identified with the graded subspace $\{x\in\frg\subuno: [h,x]=x\}$ and, by restriction, we have a graded isomorphism $\End_{\cQ}(\frg\subuno)\cong\End_\bF(W)$.

The isomorphism $\Phi$ in \eqref{eq:iotaPhi} is a graded isomorphism, so the grading on $\End_\bF(W)$ is completely determined by the grading on $U$, and hence, the grading on $W$, which is the only, up to isomorphism, irreducible graded module for $\End_\bF(W)$, is completely determined, up to a shift, by the grading on $U$. (Note that the grading on $W$ determines the grading on $\frg\subuno$, and hence the grading on the whole $\frg$, as $\frg\subo=[\frg\subuno,\frg\subuno]$.)

Now the fact that $\Phi:\Cl\subo(U,q)\rightarrow\End_{\cQ}(\frg\subuno)\simeq \End_\bF(W)$ is a graded isomorphism shows that $\calI\bigl(\Cl\subo(U,q)\bigr)=[\bF]$ in this case. There are only two possibilities then (see Subsection~\textbf{\ref{ss:dim7}}): either $m=3$ and, since we are looking only for fine gradings, $U$ is graded over $\bZ^3$, or $m=0$ and we have the situation in \textbf{\ref{ss:Cayleyexample}}, where $U$ is graded over $\bZ_2^3$.

\smallskip

In the first case the grading induced on $\frb_3$ is the $\bZ^3$-grading given by the root space decomposition relative to the Cartan subalgebra of $\frb_3$ spanned by the elements $[u_1,v_1]$, $[u_2,v_2]$ and $[u_3,v_3]$ ($\frb_3$ is identified with its image in $\Cl\subo(U,q)$ through $\iota$). Thus the grading we have on $\frg$ is the $\bZ^4$-grading corresponding to the root space decomposition relative to a Cartan subalgebra. The type of this grading is $(36,0,0,1)$.

\smallskip

In the second case, we may identify $U$ with $\cC^0$ and $W$ with $\cC$ for the $\bZ_2^3$-graded Cayley algebra $\cC$ (the spin module for $\frso(\cC^0,-N)$) as in \textbf{\ref{ss:Cayleyexample}} and \textbf{\ref{ss:F(4)Cayley}}. The grading on $W\simeq \cC$ is a shift of the $\bZ_2^3$-grading on $\cC$ and hence we get the same homogeneous components as in the $\bZ\times\bZ_2^3$-grading in Subsection~\textbf{\ref{ss:F(4)Cayley}}. We obtain, up to equivalence, the $\bZ\times\bZ_2^3$-grading in \textbf{\ref{ss:F(4)Cayley}}, of type $(19,0,7)$.
%
%
Note that the grading over $\bZ_2^3$ on $U=\cC^0$ cannot be refined while keeping the condition $\calI\bigl(\Cl\subo(U,q)\bigr)=[\bF]$, and hence the grading induced on $\frg$ is fine.

\bigskip

\subsection{}\label{ss:F(4)notoralenA1} Assume now that our grading on $\frg$ restricts to a $\bZ_2^2$-grading on $\fra_1$, which we identify with ${\cQ}^0$ for the $\bZ_2^2$-graded division algebra ${\cQ}$. Hence $\frg\subuno$ is a free right ${\cQ}$-module of rank $4$, and the map $\Phi:\Cl\subo(U,q)\rightarrow \End_{\cQ}(\frg\subuno)$ in \eqref{eq:iotaPhi} is a graded isomorphism. Thus $\frg\subuno$ is, up to isomorphism, the unique irreducible graded $\Cl\subo(U,q)$-module, its grading being determined, up to a shift, by the grading on $\Cl\subo(U,q)$. Hence $\calI\bigl(\Cl\subo(U,q)\bigr)=[{\cQ}]$, so that we are left, according to \textbf{\ref{ss:dim7}}, with three cases:
\begin{enumerate}
\item $m=2$, and $U$ is graded over $\bZ^2\times\bZ_2^2$ (as we are looking for fine gradings). Again the grading on $U$ uniquely determines, up to a shift, the grading on $\frg\subuno$, and the shift is the one needed to force the product of odd elements $\frg\subuno\times\frg\subuno\rightarrow\frg\subo$, $(x,y)\mapsto [x,y]$, to be a degree $0$ map. Therefore we get a unique possibility, which corresponds to the grading over $\bZ^2\times\bZ_2^2$ in Subsection~\textbf{\ref{ss:F(4)Kac}} on $\tkk(\cK_{10})$ of type $(32,4)$. Again the grading over $\bZ^2\times \bZ_2^2$ on $U=\cQ^0\oplus (V\otimes V)$ cannot be refined while keeping the condition $\calI\bigl(\Cl\subo(U,Q)\bigr)=[\cQ]$, so the grading induced on $\frg$ is fine.

\item $m=1$, and $U$ is graded over $\bZ\times\bZ_2^3$. This corresponds to the grading on $\tkk(\cK_{10})$ obtained by using the $\bZ_2^2$-grading on ${\cQ}^0$ and the fine $\bZ\times \bZ_2$-grading on $\cK_{10}$ as in Subsection~\textbf{\ref{ss:F(4)Kac}}. Its type is $(31,0,3)$ and, as before, it is fine.

\item $m=0$, and we are in the situation of \textbf{\ref{ss:Quaternionexample}}. In particular, $U$ appears as the subspace generated by the elements $w_1,\ldots,w_7$ in \eqref{eq:wis} inside ${\cQ}\otimes {\cQ}\otimes {\cQ}\cong\Cl\subo(U,q)$.

    Since $\Cl\subo(U,q)$ is isomorphic to $\End_{\cQ}(\frg\subuno)$, $\frg\subuno$ is the only, up to isomorphism, graded irreducible module for $\Cl\subo(U,q)$, and thus we may identify $\frg\subuno$ with ${\cQ}\otimes {\cQ}$, with a grading which is a shift of the $\bZ_2^4$-grading on ${\cQ}\otimes {\cQ}$ in \textbf{\ref{ss:Quaternionexample}}.

    The bar involution on $\Cl\subo(U,q)$ gives an orthogonal involution in $\End_{\cQ}(\frg\subuno)$, which is the adjoint involution for the skew-hermitian form $h:({\cQ}\otimes {\cQ})\times({\cQ}\otimes {\cQ})\rightarrow {\cQ}$ in \eqref{eq:h}.

    Since $\frb_3$ is embedded in the subspace of skew elements of $\Cl\subo(U,q)$ relative to the bar involution, the skew-hermitian form $h:\frg\subuno\times\frg\subuno\rightarrow {\cQ}$ is invariant under the action of $\frb_3$. Also, for any $q\in\fra_1={\cQ}^0$ and $x,y\in\frg\subuno={\cQ}\otimes {\cQ}$
    \[
    \begin{split}
    h\bigl([q,x],y\bigr)+h\bigl(x,[q,y]\bigr)&=-h(xq,y)-h(x,yq)\\
    &=-\bar qh(x,y)-h(x,y)q\\
    &=[q,h(x,y)].
    \end{split}
    \]
    (Recall that $\frg\subuno$ becomes a right ${\cQ}$-module with $[q,x]=-xq$ for any $x\in \frg\subuno$ and $q\in {\cQ}^0$. The arguments here are inspired in \cite{Eld.QuatOctForms}.)

    Thus, relative to the adjoint action of ${\cQ}^0$ on ${\cQ}$, $h:\frg\subuno\times\frg\subuno\rightarrow {\cQ}$ is $\fra_1$-invariant too.

    Therefore the bilinear map
    \[
    \begin{split}
    \frg\subuno\times\frg\subuno&\rightarrow {\cQ}^0\\
    (x,y)\, &\mapsto h(x,y)+h(y,x)=h(x,y)-\overline{h(x,y)},
    \end{split}
    \]
    is invariant under the action of both $\fra_1$ and $\frb_3$, and hence under the action of $\frg\subo$. But $\Hom_{\frg\subo}(\frg\subuno\otimes\frg\subuno,\fra_1)$ is one-dimensional and generated by the projection onto $\fra_1$ of the Lie bracket of elements in $\frg\subuno$, which is a degree $0$ map.

    Hence $\frg\subuno={\cQ}\otimes {\cQ}$ is graded over a group $G$ containing a copy of $\bZ_2^4$ in such a way that its grading is a shift by certain $g\in G$ of the grading on ${\cQ}\otimes {\cQ}$ in \textbf{\ref{ss:Quaternionexample}} so that the skew-hermitian form $h$ above becomes a degree $0$ map. That is, for homogeneous elements $x,y$ in ${\cQ}$:
    \[
    \deg_{\frg\subuno}(x\otimes y)=g+\deg_{{\cQ}\otimes {\cQ}}(x\otimes y)=g+(\deg(x),\deg(y)).
    \]
    On the other hand, for homogeneous $x\otimes y\in (\frg\subuno)_a$ and $u\otimes v\in(\frg\subuno)_b$, $h(x\otimes y,u\otimes v)=\bar yq_2vN(x,u)$ must belong to ${\cQ}_{a+b}$. It follows then that $2g=(0,\deg(q_2))$. (Note that ${\cQ}$ is graded over $0\times\bZ_2^2\leq \bZ_2^4$.)

    Therefore $\frg\subuno={\cQ}\otimes {\cQ}$ is graded over a group isomorphic to $\bZ_2^3\times \bZ_4$, which contains the copy of $\bZ_2^4$ given by $\bZ_2^3\times (2\bZ_4)$, with $g=(\bar 0,\bar 0,\bar 0,\bar 1)\in\bZ_2^3\times \bZ_4$, and
    \[
    \widetilde{\deg}(x\otimes y)=\bigl(\deg(x),\deg'(y)\bigr)\in\bZ_2^2\times (\bZ_2\times\bZ_4)=\bZ_2^3\times\bZ_4,
    \]
    where we have
    \[
    \deg'(1)=(\bar 0,\bar 1),\ \deg'(q_1)=(\bar 1,\bar 1),\ \deg'(q_2)=(\bar 0,\bar 3),\ \deg'(q_3)=(\bar 1,\bar 3)
    \]
    in $\bZ_2\times\bZ_4$.

    We conclude that $\frg$ is graded over $\bZ_2^3\times\bZ_4$. The type of the grading is $(16)$ on $\frg\subuno$, $(3)$ on $\fra_1$, and $(6,6,1)$ on $\frb_3$. This type on $\frb_3$ is computed by taking into account the degrees of the elements $w_i$'s in \eqref{eq:wis} (considered as degrees on $\bZ_2^3\times 2\bZ_4$) and checking the degrees of the homogeneous basis $\{[w_i,w_j]: 1\leq i<j\leq 7\}$ of $\frb_3$ (identified with its image in $\Cl\subo(U,q)$). The homogeneous subspace of dimension $3$ corresponds to the degree $(\bar 0,\bar 0,\bar 1,\bar 3)$ (the degree of $w_7$), which is one of the degrees appearing in $\fra_1$. Hence the overall type of the grading on $\frg$ is $(24,6,0,1)$.

    Note that this grading on $\frg$ is fine too, as any refinement would give a refinement in $\frb_3$, which would come from a refinement of the grading on $U$, but any proper refinement of this grading on $U$ gives a value of $\calI\bigl(\Cl\subo(U,q)\bigr)$ different from $[{\cQ}]$.

\end{enumerate}

The next result summarizes our results on gradings on $F(4)$:

\begin{theorem}\label{th:MainThF(4)}
The fine gradings on the simple Lie superalgebra $F(4)$ are, up to equivalence, the following:
\begin{romanenumerate}
\item The Cartan grading over $\bZ^4$ of type $(36,0,0,1)$.
\item A $\bZ\times\bZ_2^3$-grading of type $(19,0,7)$.
\item A $\bZ^2\times\bZ_2^2$-grading of type $(32,4)$.
\item A $\bZ\times\bZ_2^3$-grading of type $(31,0,3)$.
\item A $\bZ_4\times\bZ_2^3$-grading of type $(24,6,0,1)$.
\end{romanenumerate}
\end{theorem}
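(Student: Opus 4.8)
The plan is to collect the case analysis already carried out in Subsections~\ref{ss:g0ong1}, \ref{ss:F(4)toralenA1} and \ref{ss:F(4)notoralenA1}, to record that the gradings produced there are genuinely fine and pairwise inequivalent, and to observe that each of them is realized by an explicit construction from Subsections~\ref{ss:F(4)Cayley}, \ref{ss:F(4)Kac} and \ref{ss:F(4)notoralenA1}. Let $\Gamma\colon\frg=\oplus_{g\in G}\frg_g$ be a fine grading with diagonal group $D$. Since $\fra_1$ and $\frb_3$ are the only proper ideals of $\frg\subo$, $\Gamma$ induces gradings on both. By the known classification of gradings on $\frsl_2(\bF)$ recalled in \textbf{\ref{ss:g0ong1}}, exactly one of two situations occurs: either there is a nonzero diagonalizable element $h\in(\fra_1)_0$, or $(\fra_1)_0=0$ and $\fra_1$ carries the $\bZ_2^2$-grading induced by the $\bZ_2^2$-graded division algebra $\cQ$, in which case $\frg\subuno$ is a free right $\cQ$-module by Proposition~\ref{pr:graded_division}. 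In both situations the action of $\frb_3$ on $\frg\subuno$ factors as in \eqref{eq:iotaPhi} through a graded isomorphism $\Phi\colon\Cl\subo(U,q)\to\End_{\cQ}(\frg\subuno)$ for a seven-dimensional quadratic space $U$ whose grading is compatible with $q$ and determined up to a shift; comparing the degree-$0$ component of $\End_{\cQ}(\frg\subuno)$ in the two situations forces $\calI\bigl(\Cl\subo(U,q)\bigr)=[\bF]$ in the first case and $\calI\bigl(\Cl\subo(U,q)\bigr)=[\cQ]$ in the second.

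\emph{Enumerating the cases.} For $\calI\bigl(\Cl\subo(U,q)\bigr)=[\bF]$, the list in Subsection~\textbf{\ref{ss:dim7}} leaves only $m=3$ (so $U$ is $\bZ^3$-graded and $\frg$ carries the Cartan $\bZ^4$-grading of type $(36,0,0,1)$) and the Cayley configuration $m=0$ with rank-$3$ support (giving, after fixing the shift, the $\bZ\times\bZ_2^3$-grading of type $(19,0,7)$ of \textbf{\ref{ss:F(4)Cayley}}). For $\calI\bigl(\Cl\subo(U,q)\bigr)=[\cQ]$, the same list leaves $m=2$ (the $\bZ^2\times\bZ_2^2$-grading of type $(32,4)$ of \textbf{\ref{ss:F(4)Kac}}), $m=1$ with rank-$3$ support (the $\bZ\times\bZ_2^3$-grading of type $(31,0,3)$ of \textbf{\ref{ss:F(4)Kac}}), and the configuration of \textbf{\ref{ss:Quaternionexample}} ($m=0$, rank-$4$ support, second subcase), for which the shift needed to make the skew-hermitian form $h$ of \eqref{eq:h} a degree-$0$ map produces the $\bZ_4\times\bZ_2^3$-grading of type $(24,6,0,1)$. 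In each case the grading on $\frg$ is fine: a proper refinement of $\Gamma$ would refine the grading on $U$, but every proper refinement of the grading on $U$ changes $\calI\bigl(\Cl\subo(U,q)\bigr)$ away from $[\bF]$, resp. $[\cQ]$. Conversely each of the five gradings is exhibited explicitly, so all five occur; and they are pairwise inequivalent because equivalent gradings have the same type and the types $(36,0,0,1)$, $(19,0,7)$, $(32,4)$, $(31,0,3)$, $(24,6,0,1)$ are distinct.

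The main obstacle is bookkeeping rather than a new idea. One must check that the dichotomy on $\fra_1$ is exhaustive — the $\bZ_2^2$-grading is the only grading on $\frsl_2(\bF)$ with trivial degree-$0$ component, and every coarsening of it still contains a nonzero semisimple element of degree $0$ — and, the most delicate point, one must pin down the \emph{universal} grading group of each fine grading. This is routine except in the last case, where it rests on the computation in \textbf{\ref{ss:F(4)notoralenA1}} that the shift $g$ making $h$ degree $0$ satisfies $2g=(0,\deg(q_2))\neq 0$, so the grading group carries a $\bZ_4$ factor rather than merely $\bZ_2^4$; and the type on $\frb_3$ in that case is read off by tracking the degrees of the elements $w_i$ of \eqref{eq:wis} through the embedding $\iota$ of \eqref{eq:iotaPhi}.
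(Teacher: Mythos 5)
Your proposal is correct and follows essentially the same approach as the paper: a dichotomy on the induced grading on $\fra_1$ forcing $\calI\bigl(\Cl\subo(U,q)\bigr)\in\{[\bF],[\cQ]\}$, an enumeration via the classification of seven-dimensional graded quadratic spaces in Subsection~\ref{ss:dim7}, and explicit realization of each case via the Cayley algebra, the Kac Jordan superalgebra, and the quaternion construction, with the $\bZ_4$ factor in the last case coming from the shift constraint $2g=(0,\deg(q_2))$. One small point of precision: for the cases $m\geq 1$ the grading on $U$ already has all homogeneous components one-dimensional and hence admits no proper refinement whatsoever, so the appeal to $\calI$ changing is vacuous there; the $\calI$-preservation argument is only genuinely needed in the $m=0$ cases, and even there it ultimately reduces to the same one-dimensionality observation — this matches the slightly loose phrasing in the paper itself and does not affect the validity of the argument, but is worth being aware of when you assert that a refinement of $\Gamma$ must refine the grading on $U$ (that step uses that the induced grading on $\fra_1$ is already fine, so cannot be further refined).
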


\bigskip


\section{Fine gradings on the exceptional simple classical Lie superalgebra $G(3)$}\label{se:gradingsG(3)}

Let $\frg=\frg\subo\oplus\frg\subuno$ be the simple Lie superalgebra of type $G(3)$, and let $\Gamma:\frg=\oplus_{g\in G}\frg_g$ be a fine grading, $\frg_g=(\frg\subo)_g\oplus(\frg\subuno)_g$. Let $D=\Diag_\Gamma(\frg)$ be the diagonal group of the grading.

\smallskip

The even part of $\frg$ is the direct sum of its two proper ideals: $\frg\subo=\fra_1\oplus\frg_2$, where $\fra_1$ is simple of type $A_1$ and $\frg_2$ is simple of type $G_2$. Since these are the only proper ideals of $\frg\subo$, our grading $\Gamma$ (that is, the action of the diagonal group $D$) induces a grading (not necessarily fine) on both $\fra_1$ and $\frg_2$.

Identify again $\fra_1$ with $\frsl_2(\bF)$. Then either there is a diagonalizable element $h\in\frsl_2(\bF)$ of degree $0$ (that is, fixed by all the elements of $D$), or $\fra_1$ is $\bZ_2^2$-graded.

As a module for $\frg\subo$, the odd part $\frg\subuno$ is the tensor product  of the two-dimensional irreducible module for $\fra_1$ and the unique seven-dimensional irreducible module for $\frg_2$.

As in the $F(4)$ case, if $\fra_1$ is $\bZ_2^2$-graded (grading induced by the $\bZ_2^2$-grading on ${\cQ}$), then $\frg\subuno$ becomes a free right ${\cQ}$-module. But the dimension of $\frg\subuno$ is $14$, which is not a multiple of $4$. Hence this case is impossible.

\smallskip

Therefore our fine grading is a refinement of the $\bZ$-grading given by the eigenspaces of the operator $\ad_h$, with $h$ above. On the other hand, our grading induces a grading on $\frg_2$, and this is induced by a grading on its unique irreducible seven-dimensional module, considered as the (Malcev) algebra of zero trace elements in a Cayley-Dickson algebra (see \cite{CristinaCandidoG2}, \cite{BahturinTvalavadzeG2} or \cite{EKG2F4}). There are just two non-equivalent such fine gradings \cite{EldGrOctonions}: over $\bZ^2$ and over $\bZ_2^3$, and this gives the two fine gradings on $\frg$, with grading groups $\bZ^3$ and $\bZ\times\bZ_2^3$, and respective types $(28,0,1)$ and $(17,7)$.

\smallskip

Therefore, we get:

\begin{theorem}\label{th:MainThG(3)}
The fine gradings on the simple Lie superalgebra $G(3)$ are, up to equivalence, the following:
\begin{romanenumerate}
\item The Cartan grading over $\bZ^3$ of type $(28,0,1)$.
\item A $\bZ\times\bZ_2^3$-grading of type $(17,7)$.
\end{romanenumerate}
\end{theorem}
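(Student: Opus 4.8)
The plan is to reduce the classification of fine gradings on $\frg=G(3)$ to the known classification of fine gradings on the Lie algebra of type $G_2$, exactly as the section's preliminary discussion suggests. First I would record the structural facts: $\frg\subo=\fra_1\oplus\frg_2$ with $\fra_1$ of type $A_1$ and $\frg_2$ of type $G_2$, these being the only proper ideals of $\frg\subo$; and $\frg\subuno\cong V\otimes W$ as a $\frg\subo$-module, where $V$ is the $2$-dimensional irreducible module for $\fra_1$ and $W$ is the $7$-dimensional irreducible module for $\frg_2$. Since the diagonal group $D$ of the grading must permute the ideals of $\frg\subo$ but there are only two of non-equal dimension, $D$ fixes each of $\fra_1$ and $\frg_2$ setwise, so $\Gamma$ restricts to (not necessarily fine) gradings on both.

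Next I would dispose of the $\bZ_2^2$-grading case on $\fra_1$. By the argument already used for $F(4)$, if $\fra_1$ carries a $\bZ_2^2$-grading induced from the graded division algebra $\cQ$, then $\frg\subuno$ becomes a free right $\cQ$-module, forcing $4\mid\dim\frg\subuno=14$, a contradiction. Hence there is a nonzero diagonalizable $h\in(\fra_1)_0$, which after identifying $\fra_1$ with $\frsl_2(\bF)$ and rescaling we take to be $\mathrm{diag}(1,-1)$; then $\Gamma$ refines the $\bZ$-grading by eigenvalues of $\ad_h$ (eigenvalues $0,\pm2$ on $\fra_1$, $0$ on $\frg_2$, $\pm1$ on $\frg\subuno$).

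The core step is then to show that $\Gamma$ is completely determined, up to equivalence, by the induced grading on $\frg_2$, and that any fine grading on $\frg_2$ extends (uniquely, up to the known shift/equivalence) to a fine grading on $\frg$. For the first direction: $\frg\subo=[\frg\subuno,\frg\subuno]$, so it suffices to pin down the grading on $\frg\subuno$. The $\ad_h$-eigenspace $W'=\{x\in\frg\subuno:[h,x]=x\}$ is a graded copy of the $7$-dimensional $\frg_2$-module, and $V$ is graded over $\bZ$ with the two weights $\pm1$; the tensor decomposition $\frg\subuno\cong V\otimes W$ together with the requirement that the odd bracket $\frg\subuno\otimes\frg\subuno\to\frg\subo$ be degree-$0$ fixes the $\bZ$-shift between the two copies $v_1\otimes W'$ and $v_{-1}\otimes W'$, so the grading on $\frg\subuno$ — hence on $\frg$ — is determined by the grading on $W'$, i.e.\ on $\frg_2$. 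Invoking the classification of fine gradings on $\frg_2$ via gradings on the Cayley algebra (namely \cite{EldGrOctonions}, \cite{CristinaCandidoG2}, \cite{BahturinTvalavadzeG2}, \cite{EKG2F4}): there are exactly two, a $\bZ^2$-grading and a $\bZ_2^3$-grading. These produce candidate gradings on $\frg$ over $\bZ^3$ and over $\bZ\times\bZ_2^3$ respectively. Computing types: the $\bZ^3$ one is the Cartan grading of type $(28,0,1)$; for the $\bZ_2^3$ one, the $7$ one-dimensional homogeneous components of $W'$ and the split of $\fra_1$ as $0,\pm2$ give, after combining with the second copy $v_{-1}\otimes W'$ and $\frg\subo$, type $(17,7)$. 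Finally I would check fineness: any proper refinement of $\Gamma$ would induce a proper refinement of the grading on $\frg_2$ (since the grading on $\frg$ is forced by the one on $\frg_2$, and $\fra_1$'s $\bZ$-grading cannot be refined once $h$ has degree $0$), contradicting fineness of the two gradings on $\frg_2$; conversely both candidates are genuine gradings, so both are fine.

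The main obstacle is the rigidity claim in the core step — that the grading on $\frg$ is uniquely recovered, up to equivalence, from the grading on $\frg_2$ — which requires care about the $\bZ$-degree ambiguity in identifying $W\subseteq\frg\subuno$ and about whether different Cartan-subalgebra choices or shifts could yield inequivalent gradings on $\frg$; here one leans on the normalization that makes the odd bracket degree-$0$, together with the observation that the $A_1$ part contributes no further moduli (unlike the $F(4)$ analysis, the possibility of an extra $\bZ_2^2$ on $A_1$ is excluded outright by the dimension count). Compared with $F(4)$, there is no quaternion/Clifford bookkeeping to do, which is why this case is genuinely simpler.
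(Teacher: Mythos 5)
Your proposal takes essentially the same route as the paper: rule out the $\bZ_2^2$-grading on $\fra_1$ by the dimension count $4\nmid 14$, identify the $\ad_h$-eigenspace structure, reduce to the two known fine gradings on $\frg_2$ via gradings on the octonions, and read off the types $(28,0,1)$ and $(17,7)$. Your additional remarks about the degree-$0$ odd bracket normalizing the $\bZ$-shift and the uniqueness-up-to-equivalence claim are correct elaborations of what the paper treats more briefly; there is no gap.
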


\bigskip


\section{The automorphism group of $D(2,1;\alpha)$}\label{se:AutD21a}

\subsection{}\label{ss:notationD21a} Let $V_i$, $i=1,2,3$, be a two-dimensional vector space endowed with a nonzero skew-symmetric bilinear form $b_i:V_i\times V_i\rightarrow \bF$. Denote by $\frsp(V_i)$ the corresponding symplectic Lie algebra, which coincides with the Lie algebra of trace $0$ endomorphisms $\frsl(V_i)$.

Then the exceptional simple Lie superalgebra $\frg=D(2,1;\alpha)$, $\alpha\ne 0,-1$, is the Lie superalgebra with even and odd parts given by:
\begin{equation}\label{eq:evenoddD21a}
\begin{split}
\frg\subo&=\frsp(V_1)\oplus\frsp(V_2)\oplus\frsp(V_3),\\
\frg\subuno&=V_1\otimes V_2\otimes V_3,
\end{split}
\end{equation}
with the natural Lie bracket on $\frg\subo$, the natural action of $\frg\subo$ on $\frg\subuno$, and with the product of odd elements given by:
\begin{equation}\label{eq:oddproductD21a}
\begin{split}
[u_1\otimes u_2\otimes u_3,v_1\otimes v_2\otimes v_3]
&=b_2(u_2,v_2)b_3(u_3,v_3)\gamma^1_{u_1,v_1}\\
&\quad +\alpha b_1(u_1,v_1)b_3(u_3,v_3)\gamma^2_{u_2,v_2}\\
&\quad+
(-1-\alpha)b_1(u_1,v_1)b_2(u_2,v_2)\gamma^3_{u_3,v_3}
\end{split}
\end{equation}
for any $u_i,v_i\in V_i$, $i=1,2,3$, where $\gamma^i_{u_i,v_i}=b_i(u_i,.)v_i+b(v_i,.)u_i\,(\in\frsp(V_i))$. This algebra is denoted by $\Gamma(1,\alpha,-1-\alpha)$ in \cite{Scheunert}.

It follows that $D(2,1;\alpha)$ is isomorphic to $D(2,1;\beta)$ ($\alpha,\beta\ne 0,-1$) if and only if
\[
\beta\in\left\{\alpha,-1-\alpha,\frac{1}{\alpha},\frac{-1}{1+\alpha},
\frac{-\alpha}{1+\alpha},\frac{1+\alpha}{-\alpha}\right\}.
\]
These isomorphisms are obtained by permuting the factors of $\frg\subuno$ (and the corresponding simple ideals in $\frg\subo$). Thus, for instance, change $b_1$ to $\alpha b_1$, then Equation~\eqref{eq:oddproductD21a} appears as $[u_1\otimes u_2\otimes u_3,v_1\otimes v_2\otimes v_3]
=\frac{1}{\alpha}b_2(u_2,v_2)b_3(u_3,v_3)\gamma^1_{u_1,v_1}+ b_1(u_1,v_1)b_3(u_3,v_3)\gamma^2_{u_2,v_2}+
\frac{-1-\alpha}{\alpha}b_1(u_1,v_1)b_2(u_2,v_2)\gamma^3_{u_3,v_3}$. Now permute the two first factors to get $D(2,1;\alpha)\cong D\bigl(2,1;\frac{1}{\alpha}\bigr)$.

\smallskip

\subsection{}\label{ss:AutD21a} For any $f_i$ in the symplectic group $Sp(V_i)$, $i=1,2,3$, the map $\iota_{(f_1,f_2,f_3)}\in \End(\frg)$ given by:
\begin{equation}\label{eq:iotafis}
\begin{split}
\iota_{(f_1,f_2,f_3)}\bigl((x_1,x_2,x_3)\bigr)&=\bigl(f_1x_1f_1^{-1},f_2x_2f_2^{-1},f_3x_3f_3^{-1}\bigr),\\
\iota_{(f_1,f_2,f_3)}(u_1\otimes u_2\otimes u_3)&=f_1(u_1)\otimes f_2(u_2)\otimes f_3(u_3),
\end{split}
\end{equation}
for $x_i\in \frsp(V_i)$, $u_i\in V_i$, $i=1,2,3$, is an automorphism of the Lie superalgebra $\frg=D(2,1;\alpha)$.

Moreover, for $f_i,g_i\in Sp(V_i)$, $i=1,2,3$, $\iota_{(f_1,f_2,f_3)}=\iota_{(g_1,g_2,g_3)}$ if and only if $g_i=\epsilon_i f_i$ with $\epsilon_i=\pm 1$, $i=1,2,3$, and $\epsilon_1\epsilon_2\epsilon_3=1$.

Consider $K=\{(\epsilon_1I_{V_1},\epsilon_2I_{V_2},\epsilon_3I_{V_3}):\epsilon_i=\pm 1,\, i=1,2,3,\ \epsilon_1\epsilon_2\epsilon_3=1\}$, where $I_{V_i}$ denotes the identity map on $V_i$, then we get a one-to-one group homomorphism:
\begin{equation}\label{eq:SpViKAutD21a}
\begin{split}
\bigl(Sp(V_1)\times Sp(V_2)\times Sp(V_3)\bigr)/K&\rightarrow \Aut\frg\\
(f_1,f_2,f_3)K&\mapsto \iota_{(f_1,f_2,f_3)},
\end{split}
\end{equation}
and this is an isomorphism if $\alpha\ne 1,-\tfrac{1}{2},-2$ and $\alpha$ is not a primitive cubic root of $1$ (see \cite{Serganova}).

Let us denote by $\Int\frg$ the image of the homomorphism in \eqref{eq:SpViKAutD21a}.

\smallskip

\subsection{}\label{ss:osp42} For $\alpha\in\{1,-\tfrac{1}{2},-2\}$, the Lie superalgebra $\frg=D(2,1;\alpha)$ is isomorphic to the orthosymplectic Lie superalgebra $\frosp(4,2)$.

For $\alpha=-\tfrac{1}{2}$ we get the order $2$ automorphism $\pi_{2,3}$ which ``permutes'' $V_2$ and $V_3$ in $\frg\subuno$ (once these spaces are identified by means of a linear isomorphism which takes $b_2$ to $b_3$) and $\frsp(V_2)$ and $\frsp(V_3)$ in $\frg\subo$, and it turns out that the automorphism group $\Aut\frg$ is the semidirect product of $\Int\frg$ and the order two cyclic subgroup generated by $\pi_{2,3}$. (Note that if $\alpha=1$ we have to use $\pi_{1,2}$, while if $\alpha=-2$, it is $\pi_{1,3}$ the automorphism to be used.)

\smallskip

Again for $\alpha=-\tfrac{1}{2}$, one may consider the four-dimensional vector space $W=V_2\otimes V_3$, which is endowed with the nondegenerate symmetric bilinear form $b:W\times W\rightarrow \bF$ given by $b\bigl(u_2\otimes u_3,v_2\otimes v_3)=b_2(u_2,v_2)b_3(u_3,v_3)$ for any $u_2,v_2\in V_2$ and $u_3,v_3\in V_3$. Then with $V=V_1$, the odd part is $\frg\subuno=V\otimes W$.

Then the orthogonal group $O(W)$ is the semidirect product of the subgroup $\{\iota^W_{(f_2,f_3)}:f_2\in Sp(V_2),f_3\in Sp(V_3)\}$ (where $\iota^W_{(f_2,f_3)}(u_2\otimes u_3)=f_2(u_2)\otimes f_3(u_3)$) and the order two cyclic group generated by the permutation of the two factors $V_2$ and $V_3$ as before. Besides, the orthogonal Lie algebra $\frso(W)$ is naturally isomorphic to $\frsp(V_2)\oplus\frsp(V_3)$, so that we may identify $\frg\subo$ with $\frsp(V)\oplus\frso(W)$.

It turns out that
\begin{equation}\label{eq:Autosp42}
\Aut D(2,1;-\tfrac{1}{2})=\left\{\tilde\iota_{(f,g)}:f\in Sp(V),g\in O(W)\right\},
\end{equation}
where
\begin{equation}\label{eq:iotatilde}
\begin{split}
\tilde\iota_{(f,g)}(u\otimes w)&=f(u)\otimes g(w),\\ \tilde\iota_{(f,g)}(x,y)&=(fxf^{-1},gyg^{-1}),
\end{split}
\end{equation}
for any $u\in V$, $w\in W$, $x\in\frsp(V)$ and $y\in\frso(W)$. Thus $\Aut D(2,1;-\tfrac{1}{2})$ is isomorphic to $Sp(V)\times O(W)/\tilde K$, where $\tilde K=\{\pm(I_V,I_W)\}$.

\medskip

\subsection{}\label{ss:omega} Finally, if $\alpha$ is a primitive cubic root $\omega$ of $1$, then $-1-\alpha=\omega^2$. Identify $(V_1,b_1)$, $(V_2,b_2)$ and $(V_3,b_3)$ by means of suitable linear maps, and then $\Aut\frg$ is the semidirect product of $\Int\frg$ and the order three cyclic group generated by the automorphism $\varpi$ of $\frg=D(2,1;\omega)$ given by:
\begin{equation}\label{eq:varpi}
\begin{split}
\varpi(x_1,x_2,x_3)&=(x_3,x_1,x_2),\\
\varpi(u_1\otimes u_2\otimes u_3)&=\omega(u_3\otimes u_1\otimes u_2),
\end{split}
\end{equation}
for $x_i\in\frsp(V_i)$ and $u_i\in V_i$, $i=1,2,3$.

\medskip

\subsection{}\label{ss:quaterniongroup} Take a symplectic basis $\{u_l,v_l\}$ of $V_l$, $l=1,2,3$ (that is, $b_l(u_l,v_l)=1$). Inside each symplectic group $Sp(V_l)$ there appears the subgroup generated by the order $4$ elements $a_l$ and $b_l$ whose coordinate matrices in the basis $\{u_l,v_l\}$ are:
\begin{equation}\label{eq:ajbj}
a_l\leftrightarrow \begin{pmatrix} i&0\\ 0&-i\end{pmatrix},\qquad
b_l\leftrightarrow \begin{pmatrix} 0&-1\\ 1&0\end{pmatrix}.
\end{equation}
(Here $i$ denotes a square root of $-1$.) These two elements generate an order $8$ subgroup which is isomorphic to the quaternion group $Q_8=\{\pm 1,\pm i,\pm j,\pm k\}$ (a subgroup of the multiplicative group of the real division algebra of quaternions $\bH$). Here $i^2=j^2=-1$, $ij=-ji=k$.

\begin{remark}\label{re:Q8}
The elements $a_l,b_l$ in $Sp(V_l)$ satisfy the relations $a_l^4=b_l^4=1$ and $a_lb_l=-b_la_l$. It is an easy exercise to note that given any two elements in $Sp(V_l)$ satisfying these properties, there is a symplectic basis of $V_l$ such that the coordinate matrices of these two elements are the ones above for $a_l$ and $b_l$. This shows in particular that any automorphism of $Q_8$ is realized by means of a symplectic automorphism of $V_l$.
\end{remark}

\smallskip

Then, for arbitrary $\alpha\ne 0,-1$, inside our group $\Int\frg$ ($\frg=D(2,1;\alpha)$), there appears a subgroup isomorphic to $(Q_8)^3/K$, with $K=\{(\epsilon_1,\epsilon_2,\epsilon_3)\in\{\pm 1\}^3:  \epsilon_1\epsilon_2\epsilon_3=1\}$. This subgroup will play a key role later on.

We will need the following result:

\begin{proposition}\label{pr:Q83K}
Let $G$ be the group $(Q_8)^3/K$ as above, and let $H$ be a maximal abelian subgroup of $G$. Then $H$ is isomorphic to $\bZ_2^2\times \bZ_4$ and, up to conjugation by elements in the subgroup of $\Aut G$ generated by triples of automorphisms of $Q_8$ and permutations of the three copies of $Q_8$, $H$ is one of the following:
\begin{itemize}
\item $\langle i\rangle^3/K$,
\item $\{(x,x,y)\in (Q_8)^3: x\in Q_8,\, y\in \langle i\rangle\}K/K$,
\item $\langle (i,i,i)K,(j,j,i)K,(i,j,j)K\rangle/K$.
\end{itemize}
\end{proposition}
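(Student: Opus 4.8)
The strategy is a direct structural analysis of abelian subgroups of $G=(Q_8)^3/K$. First I would work upstairs, in $\tilde G=(Q_8)^3$, with its center $Z=\{\pm 1\}^3$. Since $K\subseteq Z$, subgroups of $G$ correspond to subgroups of $\tilde G$ containing $K$, and the key observation is that an abelian subgroup $H=\bar L$ of $G$ pulls back to a subgroup $L\le \tilde G$ with $[L,L]\subseteq K$. Because $[Q_8,Q_8]=\{\pm1\}$, one has $[\tilde G,\tilde G]=Z$, so the commutator condition $[L,L]\subseteq K$ is a genuine restriction: writing an element of $L$ as $(x_1,x_2,x_3)$, the commutator of two elements is $((-1)^{\langle x_1,y_1\rangle},(-1)^{\langle x_2,y_2\rangle},(-1)^{\langle x_3,y_3\rangle})$ where $\langle\,,\,\rangle$ is the alternating $\bF_2$-pairing on $Q_8/\{\pm1\}\cong\bF_2^2$ detecting non-commutation; this triple of signs must lie in $K$, i.e.\ must have product $+1$. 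So the condition is: for all $(x),(y)\in L$, $\sum_{i=1}^3\langle x_i,y_i\rangle=0$ in $\bF_2$.

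Next I would pass to the quotient $\overline{L}$ inside $(Q_8/\{\pm1\})^3\cong\bF_2^6$, which is an $\bF_2$-subspace $\bar L$ on which the form $q(x,y)=\sum_i\langle x_i,y_i\rangle$ vanishes identically — that is, $\bar L$ is a totally isotropic subspace for the non-degenerate alternating form $q$ on $\bF_2^6$. Totally isotropic subspaces of a $6$-dimensional symplectic $\bF_2$-space have dimension at most $3$, and maximality of $H$ forces $\dim_{\bF_2}\bar L=3$ (a smaller isotropic subspace can be enlarged, and enlarging $\bar L$ enlarges $H$, using that $K$ is already contained in $L$ and that the preimage of any isotropic line is abelian mod the preimage of a point). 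This immediately gives $|H|=|L|/|K|=(2^3\cdot 2^3)/2=2^6/2=2^5$, wait — more carefully, $|L| = |\bar L|\cdot|Z\cap L|$: since $Z\subseteq L$ always (as $Z$ is central and $L\supseteq K$, and one checks $L$ must contain all of $Z$ by maximality), $|L|=2^3\cdot 2^3=2^6$, so $|H|=2^5$. The abelian group $H$ of order $32$ then must be determined: $H$ has exponent $4$ (elements of $Q_8$ have order dividing $4$), it is not elementary abelian (e.g.\ $(i,i,i)K$ has order $4$), and a rank computation shows $H\cong\bZ_4\times\bZ_2^2$ — here I would compute the subgroup of squares $H^2$, which is the image of $Z/K\cong\bF_2^2$, so $H/H^2\cong\bF_2^3$ wait that gives the wrong invariant; instead $H^2\cong \bZ_2$ and $H$ has $\bF_2$-rank $3$, forcing $H\cong\bZ_4\times\bZ_2^2$.

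Then comes the classification up to the stated symmetry group $S$ (triples of automorphisms of $Q_8$ together with permutations of the three factors). By Remark~\ref{re:Q8}, $\Aut Q_8$ acts on $Q_8/\{\pm1\}\cong\bF_2^2$ as all of $GL_2(\bF_2)$ (it realizes $S_3$ on $\{\bar i,\bar j,\bar k\}$), so $S$ acts on $\bF_2^6$ as $GL_2(\bF_2)\wr S_3$ preserving the form $q$. The task reduces to classifying maximal totally isotropic $3$-spaces $\bar L\subseteq\bF_2^6$ up to this group. I would organize this by the \emph{type} of the projections $p_i(\bar L)\subseteq\bF_2^2$: each $p_i(\bar L)$ is either all of $\bF_2^2$ or a line or $0$. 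Isotropy forces constraints: if $p_i(\bar L)=\bF_2^2$ for exactly one $i$, then $\bar L\cap(\text{that factor})$ must be isotropic in $\bF_2^2$, i.e.\ a line, and the other two projections are lines — this should give the second listed subgroup $\{(x,x,y)\}$ after using the symmetry to align things; if all three projections are lines, one gets (after normalizing each line to $\langle\bar i\rangle$ by $\Aut Q_8$) the first subgroup $\langle i\rangle^3/K$ provided the lines are "independent enough," and the genuinely different case — the third subgroup — arises when the three chosen lines are related so that $\bar L$ is a "diagonal-type" $3$-space not splitting as a product, which I would pin down by exhibiting the explicit generators $(i,i,i),(j,j,i),(i,j,j)$ and checking pairwise isotropy. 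The remaining combinatorial cases (two full projections, one full and some zero, etc.) I would rule out by the isotropy/dimension count.

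\textbf{Main obstacle.} The routine parts (exponent, rank, order count) are mechanical; the real work is the combinatorial case analysis of isotropic $3$-spaces in $\bF_2^6$ modulo $GL_2(\bF_2)\wr S_3$ and checking that exactly three orbits survive — in particular distinguishing the first from the third subgroup (both have all projections equal to a line) requires a genuine invariant, namely whether $\bar L = \ell_1\oplus\ell_2\oplus\ell_3$ splits along the factors or not. I expect the bookkeeping to show that when the three lines are "the same" under the diagonal identification one is forced into the non-split case, and this is the subtle point to get right; everything else is verification.
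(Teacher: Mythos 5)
Your reduction is essentially the paper's: you pass to $G/Z(G)\cong(Q_8/\{\pm1\})^3\cong\bF_2^6$ and observe that a subgroup $H\supseteq Z(G)$ is abelian iff its image $\bar L$ in $\bF_2^6$ is totally isotropic for the orthogonal sum $q=\langle\,,\,\rangle_1\perp\langle\,,\,\rangle_2\perp\langle\,,\,\rangle_3$ of the commutator pairings, and you then classify isotropic $3$-spaces modulo $GL_2(\bF_2)\wr S_3$. This is exactly the content of the paper's Lemma~\ref{le:A1A2A3} — the paper's hypothesis ``the number of $i$ with surjective projection of $\bF_2 b_1+\bF_2 b_2$ onto $A_i$ is even'' is literally $q(b_1,b_2)=0$, since surjectivity of that projection is equivalent to $\langle p_i(b_1),p_i(b_2)\rangle\ne 0$. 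So your symplectic reformulation is a genuine (and cleaner) restatement of what the paper does. The problem is that the execution contains errors that would derail the argument.

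First, the arithmetic. $K=\{(\epsilon_1,\epsilon_2,\epsilon_3)\in\{\pm1\}^3:\epsilon_1\epsilon_2\epsilon_3=1\}$ has order $4$, not $2$, and consequently $Z(G)=Z/K$ has order $2$ (not $\bF_2^2$). With $\bar L$ of dimension $3$ this gives $|H|=2\cdot 2^3=16=|\bZ_2^2\times\bZ_4|$, whereas you computed $|H|=32$, which contradicts the isomorphism type you are trying to prove. The hesitation you flag in the write-up (``wait'') points to this exact slip, but you never resolve it.

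Second, and more seriously, your proposed case analysis by projection types is wrong. For a maximal isotropic $3$-space $\bar L\subseteq\bF_2^6$, ``exactly one full projection'' cannot occur: if $p_1(\bar L)=A_1$, pick $u,v\in\bar L$ with $\langle p_1(u),p_1(v)\rangle=1$; isotropy forces $\langle p_2(u),p_2(v)\rangle+\langle p_3(u),p_3(v)\rangle=1$, so one of $p_2,p_3$ is also surjective. The correct trichotomy is: all three projections are lines (this is the first representative, $\langle i\rangle^3/K$); exactly two are full and one is a line (the second representative, $\{(x,x,y)\}K/K$, where $p_1,p_2$ are full and $p_3$ is a line); all three are full (the third representative, $\langle(i,i,i)K,(j,j,i)K,(i,j,j)K\rangle/K$). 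In particular, your attempt to separate the first and third representatives by a ``split vs.\ non-split'' invariant among $\bar L$'s with all-line projections does not get off the ground: if $p_i(\bar L)=\ell_i$ is a line for each $i$ and $\dim\bar L=3$, then $\bar L=\ell_1\oplus\ell_2\oplus\ell_3$ automatically, so there is no non-split case in that regime; and the third representative actually projects onto all of $A_i$ in each slot. The invariant that does the work is the number of indices $i$ with $p_i(\bar L)$ a line (equivalently, with $\bar L\cap A_i\ne0$): it is $3$, $1$, or $0$, giving the three orbits. Until the case analysis is redone along these lines (and transitivity of the wreath-product action within each case is actually checked, as the paper does in the proof of Lemma~\ref{le:A1A2A3}), the proof is not complete.
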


(Here $\langle S\rangle$ denotes the subgroup generated by $S$.)

In order to prove this result we need a previous lemma:

\begin{lemma}\label{le:A1A2A3}
Let $\bF_2$ be the field of two elements, and let $A_1,A_2,A_3$ be three vector spaces over $\bF_2$ of dimension $2$. Let $B$ be a vector subspace of $A_1\oplus A_2\oplus A_3$ such that for any $b_1,b_2\in B$ the number of indices $i=1,2,3$ such that the projection of $\bF_2 b_1+\bF_2b_2$ onto $A_i$ is surjective is even, and $B$ is maximal with these properties. Then, up to a reordering of the indices, we have one of these possibilities:
\begin{enumerate}
\item $\dim B\cap A_i=1$ for any $i=1,2,3$ and $B=(B\cap A_1)\oplus(B\cap A_2)\oplus (B\cap A_3)$.
\item There is a linear isomorphism $f:A_1\rightarrow A_2$ and an element $0\ne a_3\in A_3$ such that $B=\{a+f(a):a\in A_1\}\oplus\bF_2 a_3$.
\item There are linear isomorphisms $f^i:A_1\rightarrow A_i$, $i=2,3$, and a basis $\{a,b\}$ of $A_1$, such that
    $B=\bF_2(a+f^2(a)+f^3(a))\oplus\bF_2(b+f^2(b))\oplus\bF_2(b+f^3(b))$.
\end{enumerate}
\end{lemma}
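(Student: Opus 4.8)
The plan is to translate the stated condition into linear algebra over $\bF_2$ and exploit a parity invariant. For a $2$-dimensional vector space $A$ over $\bF_2$ and a nonzero vector $a\in A$, write $\pi_i$ for the projection $A_1\oplus A_2\oplus A_3\to A_i$. Note that for a two-element set $\{b_1,b_2\}$ spanning a $2$-dimensional subspace $P$, the projection $\pi_i(P)$ is surjective precisely when $\pi_i(b_1),\pi_i(b_2)$ are $\bF_2$-independent in $A_i$; since $\dim A_i=2$, this is equivalent to $\pi_i$ being injective on $P$. So the hypothesis says: for any $2$-dimensional subspace $P\subseteq B$, the number of $i$ with $\pi_i|_P$ injective is even, hence (since there are three indices) it is $0$ or $2$. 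First I would record the easy consequence: $\dim B\le 4$. Indeed if $\dim B\ge 5=2+2+1$, then by a counting/pigeonhole argument one can find a $2$-dimensional $P\subseteq B$ on which all three $\pi_i$ are injective (e.g.\ $\dim\ker\pi_i|_B\ge 1$ forces some incompatibility); more cleanly, I would argue that $B$ cannot meet all three $\ker\pi_i$ nontrivially and still be large, and finish the bound $\dim B\le 4$ by a short case analysis on the $\dim(B\cap A_i)$.

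Next I would organize the argument by the triple of numbers $d_i:=\dim(B\cap\ker\pi_i)$, equivalently (since $\ker\pi_i=A_j\oplus A_k$) measuring how much of $B$ dies under $\pi_i$. The three listed outcomes correspond to the patterns: (1) $B$ already splits as $(B\cap A_1)\oplus(B\cap A_2)\oplus(B\cap A_3)$ with each summand a line; (2) $B$ contains one full $A_i$-line $\bF_2 a_3$ and a "graph" $\{a+f(a):a\in A_1\}$ of an isomorphism $A_1\to A_2$; (3) $B$ is spanned by a "diagonal" vector $a+f^2(a)+f^3(a)$ and two graphs $b+f^2(b)$, $b+f^3(b)$. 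The key step is: \emph{the parity condition forbids a $2$-dimensional $P\subseteq B$ with exactly one injective projection}. So whenever $\pi_j|_P$ and $\pi_k|_P$ are both non-injective (each has a nonzero kernel vector in $P$, and since $\dim P=2$ these kernels are lines), we must also have $\pi_i|_P$ non-injective. Working this out for $P$ ranging over the planes of $B$ pins down the structure: it says that the "support pattern" of vectors of $B$ (the set of $i$ with $\pi_i(b)\ne 0$) is highly constrained — essentially each nonzero $b\in B$ has support of size $0$ inside some coordinate line, or size $2$, or size $3$, and the size-$2$ and size-$3$ vectors must be "aligned" via fixed isomorphisms. I would then show that once $B$ contains two vectors with size-$3$ support, their sum has size-$2$ or smaller support, which after subtracting forces the graph-of-isomorphism structure; and that $B$ cannot contain two vectors with size-$3$ support whose supports "disagree", again by producing a forbidden plane.

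Concretely: if $\dim B\cap A_i=1$ for all $i$ and $B$ is their direct sum, we are in case (1); if some $B\cap A_i$ is a line but $B$ strictly contains it, pick $a_3\in B\cap A_3$ (say), look at $B':=B\cap\ker\pi_3=B\cap(A_1\oplus A_2)$; the parity condition on planes inside $B'$ forces that the projections $\pi_1,\pi_2$ restricted to $B'/\bF_2 a_3$ are either both injective (giving the graph of an isomorphism $f:A_1\to A_2$, case (2)) or both have a common kernel, which pushes a vector back into $A_1$ or $A_2$, contradicting maximality or reducing to case (1). If $B$ meets no $A_i$ nontrivially, then $\dim B\le 3$ and every nonzero $b\in B$ has support of size $2$ or $3$; a short analysis of the three planes of a $3$-dimensional such $B$ produces the two isomorphisms $f^2,f^3:A_1\to A_i$ and the spanning set of case (3); maximality then rules out $\dim B<3$ here. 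The main obstacle I anticipate is the bookkeeping in this last analysis — ensuring that the maps $f,f^2,f^3$ really are \emph{isomorphisms} (not merely nonzero linear maps) and that maximality of $B$ is genuinely used to exclude the "small" degenerate sub-cases, rather than just the parity condition; handling maximality correctly in each branch, and checking that cases (1)--(3) are themselves maximal (so the classification is exhaustive and non-redundant up to reordering), is the delicate part. Everything else reduces to finite checks over $\bF_2$.
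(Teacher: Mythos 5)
Your high-level strategy coincides with the paper's: reformulate the hypothesis as the condition that no plane $P\subseteq B$ has exactly one injective coordinate projection, and then carry out a case analysis organized by the intersections $B\cap A_i$ and $B\cap\ker\pi_i=B\cap(A_j\oplus A_k)$ (the paper reduces to $B\cap A_1=0$ and then splits on $\dim B\cap(A_2\oplus A_3)\in\{0,1,2\}$, with a further split on $\dim B$, which is in substance your $d_i$-bookkeeping). So you have correctly identified the key reformulation and the right invariants.

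However, there are concrete gaps. First, the object $B'/\bF_2 a_3$ does not make sense as written: you define $B'=B\cap\ker\pi_3=B\cap(A_1\oplus A_2)$, but $a_3\in A_3$, so $a_3\notin B'$; you presumably meant either to consider $\pi_1,\pi_2$ directly on $B'$, or to quotient $B$ itself by $\bF_2 a_3$ and then project — this needs to be fixed before the graph-of-isomorphism conclusion can be drawn. Second, the asserted bound $\dim B\le 4$ is not what you need and is left undefended (``by a counting/pigeonhole argument''); in fact every $B$ satisfying the parity condition has $\dim B\le 3$, and this is not a triviality — it is one of the consequences of the very case analysis you are postponing, so it cannot be invoked up front without a separate argument. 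Third, and most importantly, the lemma \emph{is} the finite case analysis: showing in each branch that the constraints force exactly the shapes (1)–(3), that the maps produced are isomorphisms (not merely injective linear maps — over $\bF_2$ in dimension $2$ this requires knowing the relevant subspace is $2$-dimensional, which again depends on the dimension bookkeeping), and that maximality eliminates the degenerate configurations. You flag all of these as ``the delicate part'' and ``finite checks over $\bF_2$,'' but they are precisely what the paper's proof carries out line by line and what your proposal does not. As it stands, this is a correct plan with a localized error, not a proof.
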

\begin{proof}
It is easy to check that the vector subspaces $B$ in these three cases satisfy the hypotheses of the lemma.

Conversely, if $B$ is a vector subspace satisfying the hypotheses of the lemma and $B\cap A_i\ne 0$ for any $i=1,2,3$, then we are in case 1).

Thus we may assume $B\cap A_1 =0$. Also the conditions on $B$ force $\dim B\cap(A_2\oplus A_3)\leq 2$. We split into different subcases:
\begin{itemize}
\item  If $B\cap(A_2\oplus A_3)=0$, then $\dim B\leq 2$. If $\dim B\leq 1$, then $B$ is contained in a larger subspace as in item 1), thus contradicting its maximality. If $\dim B=2$, there is a basis $\{a_1,b_1\}$ of $A_1$, and elements $a_2,b_2\in A_2$ and $a_3,b_3\in A_3$ such that $B=\bF_2(a_1+a_2+a_3)\oplus\bF_2(b_1+b_2+b_3)$. Now if $a_2$ and $b_2$ are linearly independent but $a_3$ and $b_3$ are not, then $B$ is contained in $\bF_2(a_1+a_2)\oplus\bF_2(b_1+b_2)\oplus\bF_2 c$ for some $0\ne c\in A_3$, which is a subspace as in item 2). The same happens (after permuting the indices $2$ and $3$) if $a_2$ and $b_2$ are linearly dependent, but $a_3$ and $b_3$ are linearly independent. The possibility of both $a_2$ and $b_2$ and $a_3$ and $b_3$ being linearly independent is ruled out by the hypotheses on $B$.
\item If $B\cap(A_2\oplus A_3)$ has dimension $1$, there is a nonzero element $c_2\in A_2$ and an element $c_3\in A_3$ such that $c_2+c_3\in B$. Here if the dimension of $B$ is $1$, again $B$ is contained in a subspace as in item 1). If $\dim B=2$, then there are elements $0\ne a_1\in A_1$, $a_2\in A_2$ and $a_3\in A_3$ such that $B=\bF_2(a_1+a_2+a_3)\oplus\bF_2(c_2+c_3)$. The hypotheses on $B$ show that either $a_2\in \bF_2c_2$ and $\dim \bF_2a_3+\bF_2c_3\leq 1$, in which case $B$ is contained again in a subspace as in item 1), or $a_2$ and $c_2$ are linearly independent in $A_2$ and so are $a_3$ and $c_3$ in $A_3$. In the latter case $B$ is contained in the subspace $\bF_2a_1\oplus\bF_2(a_2+a_3)\oplus\bF_2(c_2+c_3)$ which, after a cyclic permutation of the indices, is a subspace as in item 2).\\
    Finally, if $\dim B=3$, then there are elements $a_i,b_i\in A_i$, $i=1,2,3$ such that $B=\bF_2(a_1+a_2+a_3)\oplus\bF_2(b_1+b_2+b_3)\oplus\bF_2(c_2+c_3)$, with $a_1$ and $b_1$ linearly independent. We are left with several subcases:
    \begin{itemize}
    \item If for instance $a_2$ and $c_2$ are linearly independent, we may add to the vector $b_1+b_2+b_3$ a suitable linear combination of the other two basic vectors to get the same situation but with $b_2=0$. Then the hypotheses on $B$ force $b_3$ and $c_3$ to be linearly dependent, but both $a_3$ and $b_3$, and $a_3$ and $c_3$ must be linearly independent. Hence $b_3=c_3$ and, up to a permutation of the indices $1$ and $3$, we are in the situation of item 3). The same happens if $a_3$ and $c_3$ are linearly independent, or $b_2$ and $c_2$ are linearly independent, or $b_3$ and $c_3$ are linearly independent.
    \item Hence we may assume that the pairs $\{a_2,c_2\}$, $\{a_3,c_3\}$, $\{b_2,c_2\}$, and $\{b_3,c_3\}$ are linearly dependent. Then since $c_2\ne 0$, we may assume $a_2=b_2=0$: $B=\bF_2(a_1+a_3)\oplus\bF_2(b_1+b_3)\oplus\bF_2(c_2+c_3)$. Hence our hypotheses imply that $a_3$ and $b_3$ are linearly independent and hence $c_3=0$ (as $c_3$ is linearly dependent with both $a_3$ and $b_3$). Up to a permutation of the indices $2$ and $3$ we are in the situation of item 2).
    \end{itemize}
\item Finally, if $\dim B\cap(A_2\oplus A_3)=2$, then there are elements $a_2+a_3,b_2+b_3\in B$ with $a_2$ and $b_2$ linearly independent in $A_2$. Our hypotheses show that then $a_3$ and $b_3$ are linearly independent in $A_3$. Assume that $x_1+x_2+x_3\in B$ with $x_1\ne 0$. Then by adding a suitable linear combination of $a_2+a_3$ and $b_2+b_3$ we may assume that $x_2=0$, and hence both $x_3$ and $a_3$, and $x_3$ and $b_3$ must be linearly dependent. This forces $x_3=0$. Up to a permutation of indices $1$ and $3$ we are in the situation of item 2).
\end{itemize}
\end{proof}

\smallskip

Let us proceed now to the proof of Proposition \ref{pr:Q83K}.
Note first that the center $Z(G)$ of our group $G=(Q_8)^3/K$ is the subgroup $\{(\epsilon_1,\epsilon_2,\epsilon_3):\epsilon_l=\pm 1,l=1,2,3\}/K$ which is a subgroup of order $2$, and the quotient $G/Z(G)$ is isomorphic to $\Bigl(Q_8/Z(Q_8)\Bigr)^3$ which, in turn, is isomorphic to the abelian group $\bZ_2^6$, since $Q_8/Z(Q_8)$ is the direct product of two copies of the cyclic group of order two. Therefore, $G/Z(G)$ is a vector space over $\bF_2$.

Consider the elements $e_1=(i,1,1)K$, $e_2=(j,1,1)K$, $e_3=(1,i,1)K$, $e_4=(1,j,1)K$, $e_5=(1,1,i)K$, $e_6=(1,1,j)K$, and $-1=(-1,-1,-1)K$. Then $e_l^2=-1$ for any $l=1,\ldots,6$, and in $G$ we have $e_re_s=e_se_r$ unless $\{r,s\}=\{2l-1,2l\}$ ($l=1,2,3$), in which case we have $e_re_s=(-1)e_se_r$.
Denote by $\bar e_l$ the class of $e_l$ modulo the center. Then, as a vector space over $\bF_2$, $G/Z(G)=A_1\oplus A_2\oplus A_3$ with $A_1 =\bF_2 \bar e_1+\bF_2\bar e_2$, $A_2 =\bF_2 \bar e_3+\bF_2\bar e_4$, and $A_3 =\bF_2 \bar e_5+\bF_2\bar e_6$. Moreover, any maximal abelian subgroup of $G$ contains the center $Z(G)$ and its quotient modulo this center is a subspace of $G/Z(G)$ satisfying the hypotheses of the previous lemma. Whence the result. \qed

\medskip

In the same vein, inside the symplectic group $Sp(V_1)$ there appears a subgroup isomorphic to the torus $\bF^\times$. This is just the subgroup consisting of the endomorphisms whose coordinate matrix in the symplectic basis $\{u_1,v_1\}$ is $\spmatrix{\mu}$. Then inside our group $\Int\frg$ ($\frg=D(2,1;\alpha)$), there appears also a  subgroup isomorphic to $(\bF^\times\times Q_8\times Q_8)/K$, with $K=\{(\epsilon_1,\epsilon_2,\epsilon_3)\in\{\pm 1\}^3:  \epsilon_1\epsilon_2\epsilon_3=1\}$.

With a simpler proof than the one for Lemma \ref{le:A1A2A3} we obtain:

\begin{lemma}\label{le:A1A2}
Let $\bF_2$ be the field of two elements, and let $A_1,A_2$ be two vector spaces over $\bF_2$ of dimension $2$. Let $B$ be a vector subspace of $A_1\oplus A_2$ such that for any $b_1,b_2\in B$ the number of indices $i=1,2$ such that the projection of $\bF_2 b_1+\bF_2b_2$ onto $A_i$ is surjective is even. Then we have one of these possibilities:
\begin{enumerate}
\item There are nonzero elements $a_1\in A_1$ and $a_2\in A_2$ such that $B$ is contained in $\bF_2a_1\oplus\bF_2a_2$.
\item There is a linear isomorphism $f:A_1\rightarrow A_2$  such that $B=\{a+f(a):a\in A_1\}$.
\end{enumerate}
\end{lemma}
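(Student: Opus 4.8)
The plan is to mimic, in a simplified setting, the casework carried out in the proof of Lemma~\ref{le:A1A2A3}, exploiting the fact that now there are only two factors instead of three. As in the three-factor case, the parity hypothesis applied to a single pair $b_1,b_2$ says that the projections of $\bF_2b_1+\bF_2b_2$ onto $A_1$ and onto $A_2$ are either both surjective or both non-surjective. First I would dispose of the degenerate cases: if $B=0$ or $\dim B=1$ we are trivially in case (1). So assume $\dim B\geq 2$.

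Next I would analyze $B\cap A_i$ for $i=1,2$. If $B\cap A_1\neq 0$, pick $0\neq a_1\in B\cap A_1$; then for any $b\in B$, the projection of $\bF_2a_1+\bF_2b$ onto $A_1$ is surjective unless the $A_1$-component of $b$ lies in $\bF_2a_1$. Playing this off against the $A_2$-projection, I would conclude that either every element of $B$ has $A_1$-component in $\bF_2a_1$ (leading, after a symmetric argument with $A_2$, to case (1)), or $B$ contains an element with $A_2$-component generating a line, forcing a rigid structure. The cleanest route is: if $B\cap A_1\neq 0$ and $B\cap A_2\neq 0$, take $a_1\in B\cap A_1$, $a_2\in B\cap A_2$ nonzero; any further element $x_1+x_2\in B$ with $x_1\neq 0$, $x_2\neq 0$ would make both projections of $\bF_2a_1+\bF_2(x_1+x_2)$ — wait, that pair has surjective $A_1$-projection iff $x_1\notin\bF_2a_1$ — so combining with the pair $a_2,x_1+x_2$ one forces $x_1\in\bF_2a_1$ and $x_2\in\bF_2a_2$, i.e. $B\subseteq \bF_2a_1\oplus\bF_2a_2$, case (1).

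The remaining situation is $B\cap A_1=0$ (or, symmetrically, $B\cap A_2=0$). Then the projection $p_1:B\to A_1$ is injective, so $\dim B\leq 2$; and if $\dim B=1$ we are in case (1). If $\dim B=2$, then since $B\cap A_2$ must also be trivial (otherwise we are back in the previous paragraph up to swapping indices, or directly: a nonzero element of $B\cap A_2$ together with a general element forces things into case (1)), both projections $p_1,p_2$ are linear isomorphisms $B\to A_i$, and $f:=p_2\circ p_1^{-1}:A_1\to A_2$ is a linear isomorphism with $B=\{a+f(a):a\in A_1\}$, which is case (2). Throughout, the one subtle point to get right is the bookkeeping when $B\cap A_1=0$ but $B\cap A_2\neq 0$: here the parity condition applied to a nonzero $c_2\in B\cap A_2$ and an arbitrary $x_1+x_2\in B$ with $x_1\neq 0$ forces $x_2\in\bF_2c_2$, whence $\dim B\leq 2$ again and in fact $B\subseteq A_1'\oplus\bF_2c_2$ where $A_1'$ is the image of $p_1$; tracking the dimensions then lands one in case (1). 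I expect no real obstacle here — the two-factor version loses the genuinely intricate dimension-three subcases of Lemma~\ref{le:A1A2A3} — so the only care needed is systematic case splitting on $\dim(B\cap A_i)$ and $\dim B$, exactly as claimed in the statement that this "has a simpler proof."
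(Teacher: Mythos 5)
The paper does not actually spell out a proof of this lemma (it says only that it follows ``with a simpler proof than the one for Lemma~\ref{le:A1A2A3}''), and your casework is exactly the kind of simplified two-factor argument the authors have in mind; I checked it and it is correct. One small slip: in the paragraph beginning ``The remaining situation is $B\cap A_1=0$'', the hypothesis $B\cap A_1=0$ makes $p_2\colon B\to A_2$ injective (the kernel of $p_2$ is $A_1$), not $p_1$ as written; the conclusion $\dim B\le 2$ still follows, and likewise in the sub-case $B\cap A_1=0$, $B\cap A_2\ne 0$ the injective projection $p_2$ has image in $\bF_2c_2$, so in fact $\dim B\le 1$, which is already covered -- so that sub-case is vacuous under your standing assumption $\dim B\ge 2$ rather than ``landing in case (1)''. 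These are expository slips only; the logic of the proof is sound.
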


This lemma makes easy the proof of the next result, which will be used later on:

\begin{proposition}\label{pr:FxQ82K}
Let $G$ be the group $(\bF^\times\times Q_8\times Q_8)/K$ as above, and let $H$ be a maximal abelian subgroup of $G$. Then, up to conjugation by elements in the subgroup of $\Aut G$ generated by couples of automorphisms of $Q_8$, $H$ is one of the following:
\begin{itemize}
\item $\left(\bF^\times\times\langle i\rangle\times\langle i\rangle\right)/K$,
\item $\left(\bF^\times\times \{(x,x):x\in Q_8\}\right)K/K$.
\end{itemize}
\end{proposition}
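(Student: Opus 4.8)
The plan is to follow exactly the pattern used for Proposition \ref{pr:Q83K}, replacing Lemma \ref{le:A1A2A3} by Lemma \ref{le:A1A2}. First I would record the structure of $G=(\bF^\times\times Q_8\times Q_8)/K$: the factor $\bF^\times$ is already abelian and central in its own copy, so the only ``non-abelianness'' comes from the two copies of $Q_8$. The center of $G$ is $\bigl(\bF^\times\times\{\pm1\}\times\{\pm1\}\bigr)/K$. Any maximal abelian subgroup $H$ of $G$ must contain this center; in particular it contains the whole $\bF^\times$ factor. Write $\bar H$ for the image of $H$ in $G/Z(G)\cong\bigl(Q_8/Z(Q_8)\bigr)^2\cong\bZ_2^4=A_1\oplus A_2$, where $A_l=\bF_2\bar e_{2l-1}+\bF_2\bar e_{2l}$ with $e_1=(1,i,1)K$, $e_2=(1,j,1)K$, $e_3=(1,1,i)K$, $e_4=(1,1,j)K$ (the first slot being irrelevant, since $\bF^\times$ is central). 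The commutator pairing on $G/Z(G)$ induced by $G$ is nondegenerate on each $A_l$ (it is the standard symplectic form coming from $i\leftrightarrow j$ anticommuting) and pairs $A_1$ with $A_2$ trivially.

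The key observation, exactly as in the proof of Proposition \ref{pr:Q83K}, is that $H$ is abelian if and only if $\bar H$ is totally isotropic for this pairing, and maximality of $H$ (given that it must contain $Z(G)$ and $\bF^\times$) translates into $\bar H$ being maximal among subspaces $B\subseteq A_1\oplus A_2$ with the property that for all $b_1,b_2\in B$ the number of $i\in\{1,2\}$ for which $\bF_2 b_1+\bF_2 b_2$ surjects onto $A_i$ is even: indeed a pair of vectors fails to be isotropic for the pairing precisely when it generates a subspace surjecting onto an odd number of the $A_i$. This is exactly the hypothesis of Lemma \ref{le:A1A2}, so $\bar H$ is one of the two spaces described there, and (choosing it maximal) it has dimension $2$: either $\bar H=\bF_2 a_1\oplus\bF_2 a_2$ with $0\ne a_l\in A_l$, or $\bar H=\{a+f(a):a\in A_1\}$ for a linear isomorphism $f:A_1\to A_2$.

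It then remains to lift each of these two cases to a subgroup of $G$ and to normalize it. In the first case, by Remark \ref{re:Q8} we may apply automorphisms of the two copies of $Q_8$ so that $a_1\equiv\bar e_1$ and $a_2\equiv\bar e_3$, i.e. $H=\bigl(\bF^\times\times\langle i\rangle\times\langle i\rangle\bigr)/K$; one checks this is genuinely abelian (both $Q_8$-slots are cyclic) and maximal (any larger abelian subgroup would enlarge $\bar H$, contradicting Lemma \ref{le:A1A2}). In the second case, again by Remark \ref{re:Q8} we may choose compatible symplectic bases so that $f$ is the ``identity'', giving $\bar H=\langle \bar e_1+\bar e_3,\ \bar e_2+\bar e_4\rangle$; lifting, the candidate generators are $(1,i,i)K$ and $(1,j,j)K$, which commute in $G$ because any sign discrepancy lies in $K$, and together with the central $\bF^\times$ they generate $\bigl(\bF^\times\times\{(x,x):x\in Q_8\}\bigr)K/K$. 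Both subgroups are then visibly of the form $\bF^\times\times(\text{abelian }2\text{-group})$, and this finishes the classification up to conjugation by pairs of automorphisms of $Q_8$.

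The only genuinely delicate point is the bookkeeping in the lifting step: a priori one must worry whether the abelian subspace $\bar H$ lifts to an \emph{abelian} subgroup of $G$ at all, rather than only to a subgroup whose image is abelian. Here this is automatic because the relevant two-cocycle (the commutator map $G/Z(G)\times G/Z(G)\to Z(G)$) vanishes on $\bar H$ by isotropy, so commutators of lifts already lie in the center $Z(G)$, hence in $\bF^\times\subseteq H$ — but in fact, for the explicit generators chosen above the commutators are trivial, so no further adjustment by central elements is needed. I expect this verification, together with confirming maximality (no abelian subgroup strictly containing either $H$), to be the main — though routine — obstacle.
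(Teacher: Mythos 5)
Your proposal is correct and follows essentially the same route as the paper: identify $Z(G)=(\bF^\times\times\{\pm1\}\times\{\pm1\})/K$, note that any maximal abelian subgroup contains it, pass to $G/Z(G)\cong(Q_8/Z(Q_8))^2\cong\bZ_2^4=A_1\oplus A_2$, observe that abelian-ness of a subgroup containing $Z(G)$ is precisely the even-surjection condition on its image, and invoke Lemma~\ref{le:A1A2}. The paper's own proof is a terse three-sentence version of this; you fill in the translation between commuting in $G$ and the parity condition, and the lifting/normalization step, but these are exactly what the paper implicitly relies on.
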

\begin{proof}
It must be noted first that the subgroup $(\bF^\times\times \{\pm 1\}\times\{\pm 1\})/K$ is precisely the center of $G$, so it is contained in any maximal abelian subgroup. Moreover, the quotient $G/Z(G)$ is isomorphic to $\left(Q_8/Z(Q_8)\right)^2$, which is isomorphic to the abelian group $\bZ_2^4$. Now $HZ(G)/Z(G)$ is a vector subspace of $G/Z(G)$, considered as a vector space over $\bF_2$, and now Lemma \ref{le:A1A2} applies.
\end{proof}

\bigskip


\section{Inner fine gradings on $D(2,1;\alpha)$}\label{se:innergradingsD21a}

Let $\frg=\frg\subo\oplus\frg\subuno$ be the simple Lie superalgebra of type $D(2,1;\alpha)$, $\alpha\ne 0,-1$, and let $\Gamma:\frg=\oplus_{g\in G}\frg_g$ be a fine grading, $\frg_g=(\frg\subo)_g\oplus(\frg\subuno)_g$. Let $D=\Diag_\Gamma(\frg)$ be the diagonal group of the grading, which is a maximal abelian diagonalizable subgroup of $\Aut \frg$.

In this section we will deal with the case in which $D$ is contained in $\Int\frg\simeq \left(Sp(V_1)\times Sp(V_2)\times Sp(V_3)\right)/K$. In this case, $D$ fixes each simple ideal $\frsp(V_l)$ ($l=1,2,3$) of $\frg\subo$, and hence it induces a grading on each such ideal.

\smallskip

\subsection{}\label{ss:Gamma123Z22} Assume that the grading induced on each $\frsp(V_l)$ is a fine $\bZ_2^2$-grading. A symplectic basis $\{u_l,v_l\}$ can then be chosen on each $V_l$, so that $D$ is contained in the subgroup of $\Int\frg$ isomorphic to $(Q_8)^3/K$ considered in the previous section.

Proposition \ref{pr:Q83K} gives the possibilities here for $D$, but only the last one satisfies that the grading on each $\frsp(V_l)$ is a fine $\bZ_2^2$-grading for each $l=1,2,3$. Therefore, up to a change of the symplectic basis on each $V_l$ (see Remark \ref{re:Q8}) it turns out that $D$ is the group generated by the elements:
\[
\iota_{(a_1,a_2,a_3)},\ \iota_{(b_1,b_2,a_3)},\ \iota_{(a_1,b_2,b_3)},
\]
which is isomorphic to $\bZ_2^2\times \bZ_4$.

The homogeneous spaces are the common eigenspaces for these three generators. The type of this grading is easily checked to be $(14,0,1)$, with all the homogeneous components in $\frg\subuno$ of dimension $1$. This grading is indeed fine, no matter what the value of $\alpha$ is. Actually, $D$ is indeed a maximal abelian subgroup of $\Aut\frg$ because its centralizer is easily proved to be contained in the subgroup of $\Int\frg$ isomorphic to $(Q_8)^3/K$ considered so far, and $D$ is a maximal abelian subgroup here.

\medskip

\subsection{}\label{ss:Gamma123Z} If for an index $l=1,2,3$, the restriction of $D$ to $\frsp(V_l)$ does not induce a $\bZ_2^2$-grading, then we know that there is a diagonalizable element $h\in\frsp(V_l)$ in the zero component. We may take $l$ to be $1$ without loss of generality, at the cost of changing $\alpha$ by one of the values in $\{\alpha,-1-\alpha,\tfrac{1}{\alpha},\tfrac{-1}{1+\alpha},
\tfrac{-\alpha}{1+\alpha},\tfrac{1+\alpha}{-\alpha}\}$.

Also, we may scale $h$ so that there is a symplectic basis $\{u_1,v_1\}$ of $V_1$ with $h(u_1)=u_1$ and $h(v_1)=-v_1$. In particular the eigenvalues of $h$ on $V_1$ are $1$ and $-1$, and hence the action of $h$ induces a $5$-grading on $\frg$: $\frg=\frg_{-2}\oplus\frg_{-1}\oplus\frg_0\oplus\frg_1\oplus\frg_2$, as in \textbf{\ref{ss:F(4)toralenA1}}. Since the degree of $h$ is $0$, each $\frg_m$, $-2\leq m\leq 2$, is a graded subspace in our grading $\Gamma$, so our fine grading $\Gamma$ is a refinement of this $5$-grading, and hence the subgroup
\[
\left\{\iota_{(d_\mu,I_{V_2},I_{V_3})}:\mu\in\bF^\times\right\}
\]
is contained in $D$. (Here $d_\mu$ denotes the element in $Sp(V_l)$ with coordinate matrix $\spmatrix{\mu}$ in our symplectic basis $\{u_l,v_l\}$, $l=1,2,3$.)

\smallskip

In case the restriction of $D$ to each $V_l$, $l=1,2,3$, does not induce a $\bZ_2^2$-grading, the argument above shows that the subgroup (for a suitable election of symplectic bases)
\[
\left\{\iota_{(d_{\mu_1},d_{\mu_2},d_{\mu_3})}: \mu_1,\mu_2,\mu_3\in\bF^\times\right\}
\]
is contained in $D$. But this subgroup is the maximal abelian diagonalizable subgroup of $\Aut\frg$ which induces the fine $\bZ^3$-grading given by the root space decomposition relative to a Cartan subalgebra. Hence $D$ equals this subgroup, and we get the fine $\bZ^3$-grading (Cartan grading) whose type is again $(14,0,1)$, the three-dimensional homogeneous space being the zero space, which is the Cartan subalgebra involved.

\medskip

\subsection{}\label{ss:Gamma12Z3Z22} It cannot be the case that $D$ induces a $\bZ_2^2$-grading only on one of the ideals $\frsp(V_l)$. Indeed, we may assume that $l=3$. The arguments above give that we may choose suitable bases such that the subgroup
\[
\left\{\iota_{(d_{\mu_1},d_{\mu_2},I_{V_3})}: \mu_1,\mu_2\in\bF^\times\right\}
\]
is contained in $D$, and hence $D$ is contained in the centralizer of this subgroup in $\Int\frg$, which is the subgroup
\[
\left\{\iota_{(d_{\mu_1},d_{\mu_2},f_3)}: \mu_1,\mu_2\in\bF^\times,\, f_3\in Sp(V_3)\right\}.
\]
The fact that the grading induced on $\frsp(V_3)$ is a $\bZ_2^2$-grading shows that we may choose a symplectic basis on $V_3$ such that $D$ contains elements
\[
\iota_{(d_{\mu_1},d_{\mu_2},a_3)},\quad
\iota_{(d_{\nu_1},d_{\nu_2},b_3)}
\]
for some nonzero scalars $\mu_1,\mu_2,\nu_1,\nu_2$. But these elements do not commute (they anticommute), a contradiction since $D$ is an abelian subgroup.

\medskip

\subsection{}\label{ss:Gamma1Z23Z22} We are then left with the situation in which $D$ induces a $\bZ_2^2$-grading on two of the simple ideals in $\frg\subo$, while the other simple ideal contains a nonzero diagonalizable element of degree $0$. At the cost of changing the value of $\alpha$, we may assume that $D$ induces a $\bZ_2^2$-grading on $\frsp(V_2)$ and $\frsp(V_3)$.

As before we have that the subgroup
\[
\left\{\iota_{(d_{\mu},I_{V_2},I_{V_3})}:\mu\in\bF^\times\right\}
\]
is contained in $D$, that $D$ is contained in its centralizer in $\Int \frg$, and that we may choose symplectic bases on $V_2$ and $V_3$ such that, denoting by $Q_8^l$ the subgroup of $Sp(V_l)$ generated by $a_l$ and $b_l$ in \eqref{eq:ajbj}, $D$ is contained in the subgroup
\[
\left\{\iota_{(d_{\mu},f_2,f_3)}:\mu\in\bF^\times,\,f_2\in Q_8^2,\, f_3\in Q_8^3\right\}.
\]
This last group is isomorphic to the group $(\bF^\times\times Q_8\times Q_8)/K$ considered in Proposition \ref{pr:FxQ82K}. This proposition shows that we may choose symplectic bases on $V_2$ and $V_3$ such that $D$ becomes the subgroup generated by the elements
\begin{equation}\label{eq:iotamuII}
\iota_{(d_{\mu},I_{V_2},I_{V_3})},\ \mu\in\bF^\times,\quad
\iota_{(I_{V_1},a_2,a_3)},\ \iota_{(I_{V_1},b_2,b_3)},
\end{equation}
which is isomorphic to $\bF^\times\times \bZ_2^2$. Therefore we get a $\bZ\times \bZ_2^2$-grading. Its type is easily checked to be $(11,3)$, where the three two-dimensional homogeneous spaces are contained in $\frsp(V_2)\oplus\frsp(V_3)$.

If $\alpha\ne -\tfrac{1}{2}$, then the centralizer of $D$ is contained in $\Int\frg$, so if $D$ were contained in a larger abelian diagonalizable group $\tilde D$, this would be contained in $\Int \frg$, so it would induce $\bZ_2^2$-gradings on $\frsp(V_2)$ and $\frsp(V_3)$. Since $\iota_{(d_{\mu},I_{V_2},I_{V_3})}$, $\mu\in\bF^\times$, belongs to $\tilde D$ too, it induces a $\bZ$-grading on $\frsp(V_1)$. Hence $\tilde D$ is contained in the subgroup isomorphic to $(\bF^\times\times Q_8\times Q_8)/K$ considered in Proposition \ref{pr:FxQ82K}, but $D$ is a maximal abelian subgroup here.

On the other hand, if $\alpha=-\tfrac{1}{2}$, then the automorphism $\pi_{2,3}$ which permutes $V_2$ and $V_3$ (once we fix the symplectic bases as before) centralizes the action of $D$, and hence $D$ is not a maximal abelian diagonalizable subgroup of $\Aut\frg$ in this case.

\begin{remark}\label{re:alpha-12}
Even for $\alpha=-\frac12$, the group generated by the elements $\iota_{(I_{V_1},I_{V_2},d_{\mu})}$ ($\mu\in\bF^\times)$,
$\iota_{(a_1,a_2,I_{V_3})}$ and $\iota_{(b_1,b_2,I_{V_3})}$ is a maximal abelian diagonalizable subgroup of $\Aut\frg$, and hence it induces a fine grading over $\bZ\times\bZ_2^2$. This corresponds to the fine $\bZ\times \bZ_2^2$-grading on $D(2,1;1)$ (or $D(2,1;-2)$) given by the subgroup generated by the elements in \eqref{eq:iotamuII}.

Also, for $\alpha\not\in\{1,-2,-\frac12,\omega,\omega^2\}$ there appear three non-equivalent fine $\bZ\times\bZ_2^2$-gradings, as the simple ideal of $\frg\subo$ on which the induced grading is a $\bZ$-grading may be any of the three simple ideals, and these ideals are not permuted by the automorphism group $\Aut\frg$.
\end{remark}

\bigskip


\section{Outer fine gradings on $D(2,1;\alpha)$}\label{se:outerD21a}

This section is devoted to the classification of the fine gradings on the simple Lie superalgebra $\frg=D(2,1;\alpha)$ whose diagonal group is not contained in $\Int\frg$. Therefore we have to deal with two cases, either $\alpha$ is a primitive cubic root $\omega$ of $1$, or $\frg$ is the orthosymplectic Lie superalgebra $\frosp(4,2)$.

\subsection{Case $D(2,1;\omega)$}\label{ss:D21omega}\quad Let $\frg$ be the Lie superalgebra $D(2,1;\omega)$, where $\omega$ is a fixed primitive cubic root of $1$. Given symplectic isomorphisms $f_l:V_l\rightarrow V_{l+1}$ (that is, linear isomorphisms such that $b_{l+1}\bigl(f_l(u_l),f_l(v_l)\bigr)=b_l(u_l,v_l)$ for any $u_l,v_l\in V_l$), for $l=1,2,3$, where we are taking the indices modulo $3$, consider the linear map $\Phi_{(f_1,f_2,f_3)}:\frg\rightarrow\frg$ given by:
\begin{equation}\label{eq:Phif123}
\begin{split}
\Phi_{(f_1,f_2,f_3)}(x_1,x_2,x_3)&
   =\bigl(f_3x_3f_3^{-1},f_1x_1f_1^{-1},f_2x_2f_2^{-1}\bigr),\\
\Phi_{(f_1,f_2,f_3)}(u_1\otimes u_2\otimes u_3)&=\omega f_3(u_3)\otimes f_1(u_1)\otimes f_2(u_2),
\end{split}
\end{equation}
for $x_l\in \frsp(V_l)$, $u_l\in V_l$, $l=1,2,3$.

This map $\Phi_{(f_1,f_2,f_3)}$ is an automorphism of $\frg$ which permutes cyclically the three simple ideals $\frsp(V_l)$ of $\frg\subo$.

\begin{lemma}\label{le:cyclicauto}
The automorphisms of $\frg$ permuting cyclically the three simple ideals of $\frg\subo$: $\frsp(V_1)\mapsto\frsp(V_2)\mapsto\frsp(V_3)\mapsto\frsp(V_1)$, are precisely the automorphisms $\Phi_{(f_1,f_2,f_3)}$ defined in \eqref{eq:Phif123}.
\end{lemma}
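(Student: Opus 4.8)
\emph{Proof plan.} The inclusion ``$\supseteq$'' — that every $\Phi_{(f_1,f_2,f_3)}$ of \eqref{eq:Phif123} is an automorphism cyclically permuting the three ideals of $\frg\subo$ — has already been recorded just after \eqref{eq:Phif123}, so the content is the reverse inclusion. The plan is this. Let $\phi\in\Aut\frg$ with $\phi\bigl(\frsp(V_l)\bigr)=\frsp(V_{l+1})$, indices modulo $3$. Each restriction $\phi|_{\frsp(V_l)}\colon\frsp(V_l)\to\frsp(V_{l+1})$ is an isomorphism of Lie algebras of type $A_1$. Since for $\frsl_2$ the two-dimensional natural module is, up to isomorphism, the unique two-dimensional irreducible module and is self-dual, any such isomorphism is conjugation by a linear isomorphism: viewing $V_{l+1}$ as an $\frsp(V_l)$-module through $\phi$ and comparing it with $V_l$ via Schur's lemma produces a linear isomorphism $g_l\colon V_l\to V_{l+1}$, unique up to a nonzero scalar, with $\phi(x_l)=g_lx_lg_l^{-1}$ for all $x_l\in\frsp(V_l)$.

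First I would normalize the $g_l$ to be symplectic isomorphisms, so that $\Phi_{(g_1,g_2,g_3)}$ is defined. For each $l$ the form $(u,v)\mapsto b_{l+1}\bigl(g_l(u),g_l(v)\bigr)$ is a nonzero alternating bilinear form on the two-dimensional space $V_l$, hence equals $c_l\,b_l$ for some $c_l\in\bF^\times$; replacing $g_l$ by $\lambda_l g_l$ scales $c_l$ by $\lambda_l^2$, and since $\bF$ is algebraically closed we may choose $\lambda_l$ with $c_l=1$. With these symplectic $g_l$, the automorphism $\Phi_{(g_1,g_2,g_3)}$ agrees with $\phi$ on $\frg\subo$ by construction, so $\psi:=\Phi_{(g_1,g_2,g_3)}^{-1}\circ\phi$ is an automorphism of $\frg$ fixing $\frg\subo$ pointwise. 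Then $\psi$ is a $\frg\subo$-module endomorphism of the irreducible module $\frg\subuno=V_1\otimes V_2\otimes V_3$, so Schur gives $\psi|_{\frg\subuno}=\lambda\,\mathrm{id}$ for some $\lambda\in\bF^\times$; applying $\psi$ to a nonzero bracket $[m,n]$ of odd elements (which exists since $\frg$ is simple and $[\frg\subuno,\frg\subuno]\subseteq\frg\subo$) gives $[m,n]=\psi[m,n]=\lambda^2[m,n]$, whence $\lambda=\pm1$.

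To finish, if $\lambda=1$ then $\psi=\mathrm{id}$ and $\phi=\Phi_{(g_1,g_2,g_3)}$. If $\lambda=-1$ then $\psi$ is the automorphism $\iota_{(I_{V_1},I_{V_2},-I_{V_3})}$ of \eqref{eq:iotafis} (it fixes $\frg\subo$ and acts by $-1$ on $\frg\subuno$), and a direct substitution in the defining formulas shows $\Phi_{(g_1,g_2,g_3)}\circ\iota_{(I_{V_1},I_{V_2},-I_{V_3})}=\Phi_{(g_1,g_2,-g_3)}$; since $-g_3$ is again a symplectic isomorphism, $\phi=\Phi_{(g_1,g_2,-g_3)}$ has the asserted form, completing the proof. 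The only genuinely non-formal ingredient is the opening one — that an abstract isomorphism $\frsp(V_l)\to\frsp(V_{l+1})$ is induced by a linear map — which rests on the standard ``geometricity'' of $\frsl_2$-isomorphisms together with the self-duality of the natural module (so that no $V$ versus $V^*$ ambiguity arises); everything afterwards is bookkeeping with \eqref{eq:oddproductD21a} and \eqref{eq:Phif123}, and the value of $\omega$ plays no role here, being used only in the already-granted fact that the maps $\Phi_{(f_1,f_2,f_3)}$ are automorphisms.
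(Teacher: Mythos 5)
Your proof is correct, but it takes a genuinely different path than the paper's. The paper's proof is a two-line factorization: fix any triple of symplectic isomorphisms $f_l:V_l\to V_{l+1}$, observe that $\varphi\Phi_{(f_1,f_2,f_3)}^{-1}$ fixes each simple ideal of $\frg\subo$ and therefore (by the description of $\Aut\frg$ in Subsections~\textbf{\ref{ss:AutD21a}}--\textbf{\ref{ss:omega}}, drawn from Serganova) lies in $\Int\frg$, so equals some $\iota_{(g_1,g_2,g_3)}$; then $\varphi=\iota_{(g_1,g_2,g_3)}\Phi_{(f_1,f_2,f_3)}=\Phi_{(g_2f_1,g_3f_2,g_1f_3)}$, checked on $\frg\subuno$. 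You instead build the symplectic maps $g_l:V_l\to V_{l+1}$ from scratch: uniqueness and self-duality of the two-dimensional irreducible $\frsl_2$-module plus Schur's lemma produce $g_l$ with $\phi(x)=g_l x g_l^{-1}$, a square-root rescaling makes $g_l$ symplectic (here algebraic closure is used), and then $\Phi_{(g_1,g_2,g_3)}^{-1}\circ\phi$ restricts to a scalar $\lambda$ on the irreducible $\frg\subo$-module $\frg\subuno$; applying it to a nonzero odd bracket forces $\lambda^2=1$, and the case $\lambda=-1$ is absorbed by replacing $g_3$ with $-g_3$. What the paper's argument buys is brevity, at the cost of relying on the already-stated structure of $\Aut\frg$ (that automorphisms fixing all three ideals lie in $\Int\frg$); your argument is longer but self-contained, essentially re-deriving that structural input via elementary $\frsl_2$ representation theory. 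Both are sound; yours also makes explicit the normalization step (symplectic rescaling and the sign ambiguity) that the paper absorbs silently into the choice of $f_l$ and the computation in $\Int\frg$.
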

\begin{proof}
If $\varphi$ is such an automorphism, then for fixed $f_1,f_2,f_3$ as above, $\varphi\Phi_{(f_1,f_2,f_3)}^{-1}$ is an automorphism which fixes the three simple ideals of $\frg\subo$, and hence belongs to $\Int\frg$. Hence there are elements $g_l\in Sp(V_l)$, $l=1,2,3$, such that $\varphi\Phi_{(f_1,f_2,f_3)}^{-1}=\iota_{(g_1,g_2,g_3)}$. But then,
\[
\varphi=\iota_{(g_1,g_2,g_3)}\Phi_{(f_1,f_2,f_3)}=\Phi_{(g_2f_1,g_3f_2,g_1f_3)}
\]
as required. (Note that it is enough to check the above equality in elements of $\frg\subuno$, where it is obvious, as $\frg\subuno$ generates $\frg$.)
\end{proof}

Let $\varphi=\Phi_{(f_1,f_2,f_3)}$ be an automorphism as before which permutes cyclically the simple ideals of $\frg\subo$. Use $f_1$ and $f_2$ to identify $V_2$ and $V_3$ with $V=V_1$. Then with $f=f_3f_2f_1\in Sp(V)$, we have $\varphi=\Phi_{(I_V,I_V,f)}$.

\begin{lemma}\label{le:centralizervarphi}
Under the above conditions, the centralizer of $\varphi$ in $\Int\frg$ is the subgroup
\[
\{\iota_{(g,g,g)}: g\in Sp(V),\ fg=gf\},
\]
which is naturally isomorphic to the centralizer of $f$ in $Sp(V)$.
\end{lemma}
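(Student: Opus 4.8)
The statement to prove is Lemma~\ref{le:centralizervarphi}: that the centralizer of $\varphi=\Phi_{(I_V,I_V,f)}$ inside $\Int\frg$ equals $\{\iota_{(g,g,g)}: g\in Sp(V),\ fg=gf\}$, and that this is isomorphic (via $\iota_{(g,g,g)}\mapsto g$) to the centralizer $\Centr_{Sp(V)}(f)$. I would organize the argument in three steps: first set up the composition law, then impose commutativity, then identify the resulting group.

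**Step 1: the composition law.** The key computational input is a formula for $\iota_{(g_1,g_2,g_3)}\varphi\iota_{(g_1,g_2,g_3)}^{-1}$. Using Lemma~\ref{le:cyclicauto} (or directly from \eqref{eq:iotafis} and \eqref{eq:Phif123}), I would compute that for $g_l\in Sp(V_l)$ one has
\[
\iota_{(g_1,g_2,g_3)}\,\Phi_{(f_1,f_2,f_3)}\,\iota_{(g_1,g_2,g_3)}^{-1}=\Phi_{(g_2f_1g_1^{-1},\,g_3f_2g_2^{-1},\,g_1f_3g_3^{-1})}.
\]
It suffices to check this on $\frg\subuno$, which generates $\frg$; there it is an immediate unravelling of the definitions. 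Specializing to $f_1=f_2=I_V$, $f_3=f$, and identifying $V_1=V_2=V_3=V$, the conjugate is $\Phi_{(g_2g_1^{-1},\,g_3g_2^{-1},\,g_1fg_3^{-1})}$.

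**Step 2: impose commutativity.** By Lemma~\ref{le:cyclicauto}, two automorphisms $\Phi_{(a_1,a_2,a_3)}$ and $\Phi_{(b_1,b_2,b_3)}$ of this cyclic type coincide iff the triples agree up to the ambiguity recorded after \eqref{eq:iotafis} (replacing each $b_l$ by $\pm b_l$ with the product of signs equal to $1$). So $\iota_{(g_1,g_2,g_3)}$ centralizes $\varphi$ iff $\Phi_{(g_2g_1^{-1},\,g_3g_2^{-1},\,g_1fg_3^{-1})}=\Phi_{(I_V,I_V,f)}$, i.e. there are signs $\epsilon_l=\pm 1$ with $\epsilon_1\epsilon_2\epsilon_3=1$ such that $g_2g_1^{-1}=\epsilon_1 I_V$, $g_3g_2^{-1}=\epsilon_2 I_V$, $g_1fg_3^{-1}=\epsilon_3 f$. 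From the first two equations $g_2=\epsilon_1 g_1$ and $g_3=\epsilon_1\epsilon_2 g_1$; the third gives $g_1 f=\epsilon_3 f g_3=\epsilon_3\epsilon_1\epsilon_2\, f g_1=f g_1$, so $g_1$ commutes with $f$. Now observe that $\iota_{(g_1,\epsilon_1 g_1,\epsilon_1\epsilon_2 g_1)}=\iota_{(\epsilon_1 g_1,\epsilon_1 g_1,\epsilon_1 g_1)}=\iota_{(g,g,g)}$ with $g:=\epsilon_1 g_1$, again using that scaling all three arguments by a common sign is allowed; and $g$ commutes with $f$ iff $g_1$ does. Conversely any $\iota_{(g,g,g)}$ with $fg=gf$ centralizes $\varphi$ by the Step~1 formula. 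This establishes the set equality.

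**Step 3: the isomorphism.** The map $\Centr_{Sp(V)}(f)\to \Int\frg$, $g\mapsto\iota_{(g,g,g)}$, is a group homomorphism (restriction of the homomorphism \eqref{eq:SpViKAutD21a}), with image exactly the centralizer just described. Its kernel consists of those $g$ with $\iota_{(g,g,g)}=\mathrm{id}=\iota_{(I_V,I_V,I_V)}$, i.e. $g=\epsilon I_V$ for a sign $\epsilon$ with $\epsilon^3=1$, forcing $\epsilon=1$, so the map is injective. Hence it is an isomorphism onto the centralizer of $\varphi$ in $\Int\frg$, as claimed.

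**Expected main obstacle.** There is no deep obstacle here; the only point requiring care is the bookkeeping of the sign ambiguity $K$ in Step~2 — making sure that after absorbing the signs the triple $(g_1,g_2,g_3)$ really is of the diagonal form $(g,g,g)$ for a genuine $g\in Sp(V)$, and that this $g$ can be taken to commute with $f$. Everything else is a direct substitution into \eqref{eq:iotafis} and \eqref{eq:Phif123} together with Lemma~\ref{le:cyclicauto}.
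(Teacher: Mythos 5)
Your overall strategy is the same as the paper's: compute $\iota_{(g_1,g_2,g_3)}\varphi$ versus $\varphi\iota_{(g_1,g_2,g_3)}$ (or, equivalently, conjugate $\varphi$ by $\iota_{(g_1,g_2,g_3)}$), reduce to a uniqueness statement for the $\Phi$-parametrisation up to the $K$-ambiguity, and read off the diagonal shape $(g,g,g)$; Step~3 is straightforward and correct. However, Step~2 contains a genuine sign error at exactly the spot you yourself flagged as ``the only point requiring care.''

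The problem is the assertion
\[
\iota_{(g_1,\epsilon_1 g_1,\epsilon_1\epsilon_2 g_1)}=\iota_{(\epsilon_1 g_1,\epsilon_1 g_1,\epsilon_1 g_1)},
\]
justified by ``scaling all three arguments by a common sign is allowed.'' That justification is false: the kernel $K$ of $(f_1,f_2,f_3)\mapsto\iota_{(f_1,f_2,f_3)}$ consists of triples $(\epsilon_1 I,\epsilon_2 I,\epsilon_3 I)$ with $\epsilon_1\epsilon_2\epsilon_3=1$, and $(-1,-1,-1)$ has product $-1$, so it is \emph{not} in $K$; concretely, $\iota_{(-f_1,-f_2,-f_3)}$ acts as $-\iota_{(f_1,f_2,f_3)}$ on $\frg\subuno=V_1\otimes V_2\otimes V_3$, hence is a different automorphism. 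Moreover the rescaling needed to pass from $(g_1,\epsilon_1 g_1,\epsilon_1\epsilon_2 g_1)$ to $(\epsilon_1 g_1,\epsilon_1 g_1,\epsilon_1 g_1)$ is $(\epsilon_1,1,\epsilon_2)$, which is not a common sign and whose product is $\epsilon_1\epsilon_2=\epsilon_3$. When $\epsilon_3=-1$ (which is perfectly possible: e.g.\ $\epsilon_1=1$, $\epsilon_2=\epsilon_3=-1$), this rescaling lies outside $K$, so the displayed equality of $\iota$'s is actually false, and the chosen $g=\epsilon_1 g_1$ does not work. The fix is easy: take $g=\epsilon_2 g_1$ instead. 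Then $(g_1,\epsilon_1 g_1,\epsilon_1\epsilon_2 g_1)=(\epsilon_2 g,\epsilon_1\epsilon_2 g,\epsilon_1 g)$ and the rescaling $(\epsilon_2,\epsilon_1\epsilon_2,\epsilon_1)$ has product $\epsilon_1^2\epsilon_2^2=1$, so $\iota_{(g_1,\epsilon_1 g_1,\epsilon_1\epsilon_2 g_1)}=\iota_{(g,g,g)}$ legitimately, and since $g$ is $\pm g_1$ it still commutes with $f$. (Equivalently, one may first use the $K$-ambiguity in the choice of the representative triple $(g_1,g_2,g_3)$ to normalise $\epsilon_3=1$ before extracting $g$; this is essentially what the paper does by arranging the final triple as $(\epsilon_2 g,\epsilon_3 g,\epsilon_1 g)$ with $\epsilon_1\epsilon_2\epsilon_3=1$.) With this correction your argument matches the paper's.
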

\begin{proof}
For $g_l\in Sp(V)$, $l=1,2,3$,
\[
\begin{split}
\iota_{(g_1,g_2,g_3)}\varphi&=\Phi_{(g_2,g_3,g_1f)},\\
\varphi\iota_{(g_1,g_2,g_3)}&=\Phi_{(g_1,g_2,fg_3)},
\end{split}
\]
and hence, the automorphism $\iota_{(g_1,g_2,g_3)}$ centralizes $\varphi$ if and only if there are $\epsilon_l=\pm 1$, $l=1,2,3$, with $\epsilon_1\epsilon_2\epsilon_3=1$ such that $g_1=\epsilon_1g_2$, $g_2=\epsilon_2g_3$ and $fg_3=\epsilon_3g_1f$. It follows that $g_3f=fg_3$, and $(g_1,g_2,g_3)=(\epsilon_3g_3,\epsilon_2g_3,g_3)=(\epsilon_2 g,\epsilon_3 g,\epsilon_1 g)$, with $g=\epsilon_1g_3\in Sp(V)$. The result follows since $\iota_{(\epsilon_2 g,\epsilon_3 g,\epsilon_1 g)}=\iota_{(g,g,g)}$.
\end{proof}

\smallskip

Let $\Gamma:\frg=\oplus_{g\in G}\frg_g$ be a fine grading and let $D=\Diag_\Gamma(\frg)$ be the diagonal group of the grading. Assume that $D$ contains an automorphism $\varphi$ as above which permutes cyclically the three simple ideals of $\frg\subo$, so that with the identifications above we may assume that $\varphi=\Phi_{(I_V,I_V,f)}$ for some $f\in Sp(V)$. Note that $\varphi^3=\iota_{(f,f,f)}$, so that, since $\varphi$ is diagonalizable, so is $f$. Hence there is a symplectic basis $\{u,v\}$ of $V$ such that the coordinate matrix of $f$ is $d_\mu=\spmatrix{\mu}$ for some $\mu\in\bF^\times$.

The centralizer of $f$ in $Sp(V)$ is then easily checked to be either the whole $Sp(V)$ if $\mu=\pm 1$, or the subgroup $\{d_\nu:\nu\in\bF^\times\}$ otherwise (where we identify any element in $Sp(V)$ with its coordinate matrix in the basis above). In the second case, our maximal abelian diagonalizable subgroup $D$ is contained in the centralizer of $\varphi$, which is abelian and diagonalizable.
While in the first case, the centralizer of $f$ in $Sp(V)$ is the whole $Sp(V)$. The maximality of $D$ shows that there is some diagonalizable $g\in Sp(V)$, $g\ne\pm I_V$ such that $\iota_{(g,g,g)}\in D$. Now we take a symplectic basis $\{u,v\}$ of $V$ in which the coordinate matrix of $g$ is $d_\mu$ for some $\mu\ne \pm 1$, and we conclude (since $D$ is contained in the centralizer of $\iota_{(g,g,g)}$) that $D$ is
generated by $\varphi$ and the subgroup $\{\iota_{(d_\nu,d_\nu,d_\nu)}:\nu\in \bF^\times\}$.

In any case, there is an element $g\in Sp(V)$ such that $\iota_{(g,g,g)}\in D$ and $g^3=f^{-1}$. Then $\iota_{(g,g,g)}\varphi=\Phi_{(g,g,gf)}$, which is an automorphism in $D$ whose cube is $\iota_{(I_V,I_V,I_V)}=1$, as $g^3f=I_V$. Therefore, we may assume from the beginning that our element $\varphi$ in $D$ which permutes cyclically the three simple ideals of $\frg\subo$ satisfies $\varphi^3=1$, and hence we may choose the identifications of $V_2$ and $V_3$ with $V=V_1$ so that $\varphi=\Phi_{(I_V,I_V,I_V)}$, and
\[
D=\{\iota_{(d_\mu,d_\mu,d_\mu)}:\mu\in\bF^\times\}\times\langle\varphi\rangle\cong \bF^\times \times \bZ_3.
\]
We thus obtain a $\bZ\times\bZ_3$-grading.

It is straightforward to check that all the homogeneous spaces are one-dimensional. That is, its type is $(17)$.

\bigskip

\subsection{Case $D(2,1;-\tfrac{1}{2})$}\label{ss:D21osp42}\quad In this subsection we deal with the case in which the diagonal subgroup of our fine grading permutes two of the three simple ideals in $\frg\subo$. Then $\alpha\in\{1,-\tfrac{1}{2},-2\}$. Without loss of generality we may assume that $\alpha=-\tfrac{1}{2}$ and hence that the permuted ideals are $\frsp(V_2)$ and $\frsp(V_3)$. In this case $\frg=D(2,1;-\tfrac{1}{2})$ is isomorphic to the orthosymplectic Lie superalgebra $\frosp(4,2)$.

\smallskip

Working as in \textbf{\ref{ss:D21omega}}, given symplectic isomorphisms $f:V_1\rightarrow V_1$, $g:V_2\rightarrow V_3$ and $h:V_3\rightarrow V_2$, the linear map $\hat\Phi_{(f,g,h)}:\frg\rightarrow\frg$ given by:
\begin{equation}\label{eq:hatPhifgh}
\begin{split}
\hat\Phi_{(f,g,h)}(x_1,x_2,x_3)&
   =\bigl(fx_1f^{-1},hx_3h^{-1},gx_2g^{-1}\bigr),\\
\hat\Phi_{(f,g,h)}(u_1\otimes u_2\otimes u_3)&= f(u_1)\otimes h(u_3)\otimes g(u_2),
\end{split}
\end{equation}
for $x_l\in \frsp(V_l)$, $u_l\in V_l$, $l=1,2,3$,
is an automorphism of $\frg$ which permutes $\frsp(V_2)$ and $\frsp(V_3)$.

We easily prove the following result, similar to Lemma \ref{le:cyclicauto}.

\begin{lemma}\label{le:permut23auto}
The automorphisms of $\frg$ permuting the simple ideals $\frsp(V_2)$ and $\frsp(V_3)$ of $\frg\subo$ are precisely the automorphisms $\hat\Phi_{(f,g,h)}$ defined in \eqref{eq:hatPhifgh}.
\end{lemma}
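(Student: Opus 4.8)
The plan is to mimic exactly the proof of Lemma~\ref{le:cyclicauto}, using Lemma~\ref{le:permut23auto}'s predecessor structure: first observe that the maps $\hat\Phi_{(f,g,h)}$ are indeed automorphisms permuting $\frsp(V_2)$ and $\frsp(V_3)$ (this was already asserted in the text just before the statement, so nothing is needed there), and then show that any automorphism with this permutation behaviour is of this form. The key idea is that composing an arbitrary such automorphism $\varphi$ with the inverse of a fixed $\hat\Phi_{(f_0,g_0,h_0)}$ produces an automorphism fixing all three simple ideals of $\frg\subo$, hence lying in $\Int\frg$ by \textbf{\ref{ss:AutD21a}}, and is therefore of the form $\iota_{(a_1,a_2,a_3)}$ for suitable $a_l\in Sp(V_l)$.

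First I would fix any triple of symplectic isomorphisms $f_0:V_1\to V_1$, $g_0:V_2\to V_3$, $h_0:V_3\to V_2$ (for instance $f_0=I_{V_1}$ and $g_0,h_0$ mutually inverse identifications of $V_2$ with $V_3$), giving the automorphism $\hat\Phi_{(f_0,g_0,h_0)}$. Given $\varphi$ permuting $\frsp(V_2)$ and $\frsp(V_3)$, the composite $\psi=\varphi\circ\hat\Phi_{(f_0,g_0,h_0)}^{-1}$ fixes each simple ideal $\frsp(V_l)$, so $\psi\in\Int\frg$ and $\psi=\iota_{(a_1,a_2,a_3)}$ for some $a_l\in Sp(V_l)$. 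Then $\varphi=\iota_{(a_1,a_2,a_3)}\circ\hat\Phi_{(f_0,g_0,h_0)}$, and one computes on $\frg\subuno$ (which generates $\frg$, so it suffices to check there) that this composite equals $\hat\Phi_{(a_1 f_0,\, a_3 g_0,\, a_2 h_0)}$: indeed $\iota_{(a_1,a_2,a_3)}$ acts on $u_1\otimes u_2\otimes u_3$ after $\hat\Phi_{(f_0,g_0,h_0)}$ sends it to $f_0(u_1)\otimes h_0(u_3)\otimes g_0(u_2)$, yielding $a_1f_0(u_1)\otimes a_2h_0(u_3)\otimes a_3g_0(u_2)$, which is precisely $\hat\Phi_{(a_1f_0, a_3g_0, a_2h_0)}(u_1\otimes u_2\otimes u_3)$. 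Here one must be slightly careful with the index bookkeeping: $\hat\Phi$ sends the $V_2$-slot to a $V_3$-valued entry and conversely, so the factor acting on the middle tensor slot is the map into $V_2$, namely $a_2 h_0$, while the factor on the last slot is $a_3 g_0$. The analogous check on $\frg\subo$ is automatic since $\frg\subuno$ generates $\frg$.

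The proof is essentially a formal transcription with no real obstacle; the only point requiring care is the relabelling of which symplectic map lands in which $V_l$, because the permutation $2\leftrightarrow 3$ twists the indices when one conjugates a $\frsp(V_2)$-element by an element of $Sp(V_3)$ after the swap. I would therefore write the computation explicitly on a decomposable odd element $u_1\otimes u_2\otimes u_3$, note that both sides are linear and that $\frg\subuno$ generates $\frg$ as a Lie superalgebra, and conclude. I expect to state it as: if $\varphi$ is such an automorphism then $\varphi=\hat\Phi_{(a_1f_0,\,a_3g_0,\,a_2h_0)}$ with $a_1f_0\in Sp(V_1)$ a symplectic automorphism of $V_1$, $a_3g_0:V_2\to V_3$ and $a_2h_0:V_3\to V_2$ symplectic isomorphisms, which is of the required form.
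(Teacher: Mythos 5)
Your proof is correct and follows exactly the strategy the paper itself invokes (it merely says the proof is ``similar to Lemma~\ref{le:cyclicauto}'' and leaves it out): compose $\varphi$ with the inverse of one fixed $\hat\Phi_{(f_0,g_0,h_0)}$ to land in $\Int\frg$, write the result as $\iota_{(a_1,a_2,a_3)}$, and check on decomposable elements of $\frg\subuno$ (which generates $\frg$) that $\iota_{(a_1,a_2,a_3)}\circ\hat\Phi_{(f_0,g_0,h_0)}=\hat\Phi_{(a_1f_0,\,a_3g_0,\,a_2h_0)}$. Your index bookkeeping for the $2\leftrightarrow 3$ twist is right, and the only mild imprecision is that the fact ``an automorphism fixing all three simple ideals lies in $\Int\frg$'' comes from the description of $\Aut\frg$ in Subsection~\ref{ss:osp42} (the semidirect product with $\langle\pi_{2,3}\rangle$) rather than from Subsection~\ref{ss:AutD21a} alone, but that is a citation detail, not a gap.
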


Let $\varphi=\hat\Phi_{(f,g,h)}$ be an automorphism as before which permutes the two last simple ideals of $\frg\subo$.

\begin{lemma}\label{le:centralizervarphibis}
Under the above conditions, the centralizer of $\varphi$ in $\Int\frg$ is the subgroup
\[
\begin{split}
\{\iota_{(f_1,f_2,h^{-1}f_2h)}&: f_1\in Sp(V_1),\ f_2\in Sp(V_2),\\
    &\quad f_1f=\epsilon ff_1,\ f_2hg=\epsilon hgf_2,\ \text{for some $\epsilon=\pm 1$}\}.
\end{split}
\]
\end{lemma}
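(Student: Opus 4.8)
The plan is to compute directly, following the pattern of Lemma~\ref{le:centralizervarphi}. I would first record how an inner automorphism composes with an automorphism of the form $\hat\Phi_{(f,g,h)}$. For $g_l\in Sp(V_l)$, $l=1,2,3$, a straightforward check on $\frg\subuno$ (which generates $\frg$, so it suffices) gives
\[
\begin{split}
\iota_{(g_1,g_2,g_3)}\hat\Phi_{(f,g,h)}&=\hat\Phi_{(g_1f,g_3g,g_2h)},\\
\hat\Phi_{(f,g,h)}\iota_{(g_1,g_2,g_3)}&=\hat\Phi_{(fg_1,gg_3,hg_2)}.
\end{split}
\]
Then $\iota_{(g_1,g_2,g_3)}$ centralizes $\varphi=\hat\Phi_{(f,g,h)}$ if and only if these two automorphisms of the form $\hat\Phi$ coincide. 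Using the earlier description of when two maps $\hat\Phi_{(\cdot,\cdot,\cdot)}$ are equal (the analogue of the $\iota$-uniqueness statement in \textbf{\ref{ss:AutD21a}}: the triple is determined up to signs $\epsilon_l=\pm1$ with $\epsilon_1\epsilon_2\epsilon_3=1$), equality amounts to the existence of $\epsilon_l=\pm1$, $\epsilon_1\epsilon_2\epsilon_3=1$, with
\[
g_1f=\epsilon_1 fg_1,\qquad g_3g=\epsilon_2 gg_3,\qquad g_2h=\epsilon_3 hg_2.
\]

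Next I would solve this system. From the second and third relations, $g_3=\epsilon_2\, g^{-1}g_3g$ forces $g_3$ to be expressible through $g_2$: combining $g_3g=\epsilon_2 gg_3$ and $g_2h=\epsilon_3 hg_2$ and eliminating, one gets $g_3=\epsilon\, h^{-1}g_2 h$ (with $\epsilon$ a product of the $\epsilon_l$'s; since $\iota_{(g_1,g_2,g_3)}$ only depends on the triple modulo the sign subgroup $K$, the overall sign can be absorbed, so one may take $g_3=h^{-1}g_2h$). Writing $\epsilon=\epsilon_1$ one then reads off $g_1f=\epsilon ff_1$ and, substituting $g_3=h^{-1}g_2h$ into $g_3g=\epsilon_2 gg_3$, the condition $g_2(hg)=\epsilon_2(hg)g_2$; and one checks $\epsilon_2=\epsilon$ after the sign normalization (or, more cleanly, simply states the centralizer as the set of $\iota_{(f_1,f_2,h^{-1}f_2h)}$ with $f_1f=\epsilon ff_1$ and $f_2hg=\epsilon hgf_2$ for a common $\epsilon=\pm1$, which is exactly the claimed subgroup, remembering that replacing $(f_1,f_2,h^{-1}f_2h)$ by its negative changes nothing in $\Int\frg$). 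Conversely, any such $\iota_{(f_1,f_2,h^{-1}f_2h)}$ visibly satisfies the three commutation relations with the appropriate signs, hence centralizes $\varphi$; this gives the reverse inclusion.

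The only mildly delicate point is the bookkeeping of the signs $\epsilon_1,\epsilon_2,\epsilon_3$ and showing they can be reduced to a single $\epsilon$: this uses that $\iota$ is only defined on $(Sp(V_1)\times Sp(V_2)\times Sp(V_3))/K$, so one is free to flip the signs of the $f_i$'s subject to the product being $1$, and that flipping signs does not affect the commutation relations. I expect this sign normalization to be the main (very minor) obstacle; everything else is a direct computation on $\frg\subuno=V_1\otimes V_2\otimes V_3$ using \eqref{eq:iotafis} and \eqref{eq:hatPhifgh}, exactly parallel to the proof of Lemma~\ref{le:centralizervarphi}.

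\begin{proof}
As in the proof of Lemma~\ref{le:cyclicauto}, it suffices to verify all identities on $\frg\subuno=V_1\otimes V_2\otimes V_3$, since this space generates $\frg$. A direct computation from \eqref{eq:iotafis} and \eqref{eq:hatPhifgh} gives, for $g_l\in Sp(V_l)$,
\[
\begin{split}
\iota_{(g_1,g_2,g_3)}\hat\Phi_{(f,g,h)}&=\hat\Phi_{(g_1f,g_3g,g_2h)},\\
\hat\Phi_{(f,g,h)}\iota_{(g_1,g_2,g_3)}&=\hat\Phi_{(fg_1,gg_3,hg_2)}.
\end{split}
\]
Hence $\iota_{(g_1,g_2,g_3)}$ centralizes $\varphi=\hat\Phi_{(f,g,h)}$ if and only if $\hat\Phi_{(g_1f,g_3g,g_2h)}=\hat\Phi_{(fg_1,gg_3,hg_2)}$. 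Arguing as for the uniqueness of the $\iota_{(f_1,f_2,f_3)}$ in \textbf{\ref{ss:AutD21a}}, two such maps agree if and only if the corresponding symplectic triples differ by signs: there are $\epsilon_l=\pm1$, $l=1,2,3$, with $\epsilon_1\epsilon_2\epsilon_3=1$ and
\[
g_1f=\epsilon_1 fg_1,\qquad g_3g=\epsilon_2 gg_3,\qquad g_2h=\epsilon_3 hg_2.
\]
From the last two relations, $g_3=\epsilon_2 g^{-1}g_3 g$ and $g_2=\epsilon_3 h^{-1}g_2h$; eliminating $g_3$ we obtain $g_3=\epsilon_2\epsilon_3\, h^{-1}g_2 h$ and $g_2(hg)=\epsilon_2(hg)g_2$. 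Since $\iota_{(g_1,g_2,g_3)}$ depends only on $(g_1,g_2,g_3)$ modulo $K$, we may replace this triple by $(\delta_1 g_1,\delta_2 g_2,\delta_3 g_3)$ for any signs with $\delta_1\delta_2\delta_3=1$; choosing them suitably we may assume $\epsilon_2\epsilon_3=1$, so $g_3=h^{-1}g_2 h$, and then $\epsilon_1=\epsilon_2\epsilon_3^{-1}=\epsilon_2$. Writing $\epsilon=\epsilon_1=\epsilon_2$ and $f_1=g_1$, $f_2=g_2$, the commutation relations become $f_1f=\epsilon ff_1$ and $f_2(hg)=\epsilon(hg)f_2$. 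Thus every element of the centralizer has the stated form. Conversely, for any $f_1\in Sp(V_1)$, $f_2\in Sp(V_2)$ with $f_1f=\epsilon ff_1$ and $f_2hg=\epsilon hgf_2$ for a common $\epsilon=\pm1$, setting $g_1=f_1$, $g_2=f_2$, $g_3=h^{-1}f_2h$ one checks directly that the three displayed relations hold with $\epsilon_1=\epsilon_2=\epsilon$, $\epsilon_3=1$, so $\iota_{(f_1,f_2,h^{-1}f_2h)}$ centralizes $\varphi$. This proves the asserted description of the centralizer.
\end{proof}
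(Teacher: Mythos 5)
The approach is the right one---compute both compositions with $\varphi$, compare using the uniqueness up to signs, and normalize by the freedom in $\iota$---and the first displayed composition $\iota_{(g_1,g_2,g_3)}\hat\Phi_{(f,g,h)}=\hat\Phi_{(g_1f,g_3g,g_2h)}$ is correct. But the second one is wrong: it should read
\[
\hat\Phi_{(f,g,h)}\iota_{(g_1,g_2,g_3)}=\hat\Phi_{(fg_1,\,gg_2,\,hg_3)},
\]
not $\hat\Phi_{(fg_1,gg_3,hg_2)}$. Indeed, evaluating on $u_1\otimes u_2\otimes u_3$ gives $fg_1(u_1)\otimes hg_3(u_3)\otimes gg_2(u_2)$, and the second entry of the triple parametrizing a $\hat\Phi$ must be a map $V_2\rightarrow V_3$, the third a map $V_3\rightarrow V_2$; your $gg_3$ and $hg_2$ do not even type-check, since $g_3\in Sp(V_3)$ has the wrong target for composing with $g:V_2\rightarrow V_3$ on the left, and similarly for $hg_2$.

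This error propagates: the correct centralizer conditions are $g_1f=\epsilon_1 fg_1$, $g_3g=\epsilon_2 gg_2$, and $g_2h=\epsilon_3 hg_3$ (exactly as in the paper), and it is the cross-linking of $g_2$ and $g_3$ in the last two that allows you to solve for $g_3$ in terms of $g_2$. Your (incorrect) conditions $g_3g=\epsilon_2 gg_3$ and $g_2h=\epsilon_3 hg_2$ constrain $g_2$ and $g_3$ \emph{separately} and say nothing that links them; the step ``eliminating $g_3$ we obtain $g_3=\epsilon_2\epsilon_3\,h^{-1}g_2h$'' is a non sequitur from those relations. You land on the right final formula only because you already knew what it should be, not because it follows from the relations you wrote down. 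Fix the second composition formula and the rest of the argument (including the sign normalization $\epsilon_3=1$ via $\iota_{(-f_1,f_2,-f_3)}=\iota_{(f_1,f_2,f_3)}$) goes through cleanly and matches the paper's proof.
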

\begin{proof}
For $f_l\in Sp(V_l)$, $l=1,2,3$,
\[
\begin{split}
\iota_{(f_1,f_2,f_3)}\varphi&=\hat\Phi_{(f_1f,f_3g,f_2h)},\\
\varphi\iota_{(f_1,f_2,f_3)}&=\hat\Phi_{(ff_1,gf_2,hf_3)},
\end{split}
\]
and hence, the automorphism $\iota_{(f_1,f_2,f_3)}$ centralizes $\varphi$ if and only if there are $\epsilon_l=\pm 1$, $l=1,2,3$, with $\epsilon_1\epsilon_2\epsilon_3=1$ such that $f_1f=\epsilon_1ff_1$, $f_3g=\epsilon_2gf_2$ and $f_2h=\epsilon_3hf_3$. But $\iota_{(f_1,f_2,f_3)}=\iota_{(-f_1,f_2,-f_3)}$, so we may assume $\epsilon_3=1$ and $\epsilon_1=\epsilon_2=\epsilon$. The result follows.
\end{proof}

\smallskip

Let $\Gamma:\frg=\oplus_{g\in G}\frg_g$ be a fine grading and let $D=\Diag_\Gamma(\frg)$ be the diagonal group of the grading. Assume that $D$ contains an automorphism $\varphi=\hat\Phi_{(f,g,h)}$ as above which permutes $\frsp(V_2)$ and $\frsp(V_3)$, and write $\tilde D=D\cap \Int\frg$.  Note that $\varphi^2=\iota_{(f^2,hg,gh)}$, so that, since $\varphi$ is diagonalizable, so are $f^2$ and $hg$, and hence so is $f$ too.

\smallskip

For $l=1,2,3$ we have the projection map:
\[
\begin{split}
\pi_l:\Int\frg&\rightarrow PSL(V_l)\cong\Aut\frsp(V_l)\\
\iota_{(f_1,f_2,f_3)}&\mapsto\ \bar f_l,
\end{split}
\]
where $\bar f_l$ denotes the class of $f_l\in Sp(V_l)=SL(V_l)$ in $PSL(V_l)$.

For $l=1$, we may even consider the projection map $\pi_1:\Aut\frg\rightarrow PSL(V_1)$, since any automorphism of $\frg$ preserves $\frsp(V_1)$.

The abelian diagonalizable subgroup $\pi_1(D)$ induces a grading on $\frsp(V_1)$, while $\pi_2(\tilde D)$ and $\pi_3(\tilde D)$ induce gradings on $\frsp(V_2)$ and $\frsp(V_3)$. Moreover, Lemma \ref{le:centralizervarphibis} shows that $\pi_3(\tilde D)$ is isomorphic to $\pi_2(\tilde D)$. From the known facts about gradings on $\frsl_2(\bF)$, either there exists a diagonalizable element  in the zero component of the grading induced by $\pi_2(\tilde D)$ on $\frsp(V_2)$, or $\pi_2(\tilde D)$ (and hence $\pi_3(\tilde D)$ too) is isomorphic to $\bZ_2^2$. Actually, in this case a symplectic basis of $V_2$ can be chosen such that $\pi_2(\tilde D)=\langle \bar a_2,\bar b_2\rangle$, with $a_2$ and $b_2$ as in \eqref{eq:ajbj}.

We are left with several possibilities:

\begin{enumerate}
\item Assume first that for the grading induced by $\pi_1(D)$ on $\frsp(V_1)$ there is a diagonalizable element in the zero component. Then, as in \textbf{\ref{ss:Gamma123Z}}, there is a symplectic basis $\{u_1,v_1\}$ of $V_1$ such that $\{\iota_{(d_\mu,I_{V_2},I_{V_3})}:\mu\in\bF^\times\}$ is contained in $\tilde D$ and $\pi_1(D)=\{\bar d_\mu:\mu\in\bF^\times\}$. In particular, with $\varphi=\hat\Phi_{(f,g,h)}$, it follows that $f=d_\mu$ for some $\mu\in\bF^\times$. We may change $\varphi$ to $\iota_{(d_{\mu^{-1}},I_{V_2},I_{V_3})}\varphi$ and hence assume $f=I_{V_1}$. Then the centralizer of $\varphi$ in $\Int\frg$ is the subgroup
    \[
    \{\iota_{(f_1,f_2,h^{-1}f_2h)}: f_1\in Sp(V_1),\ f_2\in Sp(V_2),\ f_2hg=hgf_2\}.
    \]
    Since $hg$ is diagonalizable, we may take a symplectic basis $\{u_2,v_2\}$ of $V_2$ such that the coordinate matrix of $hg$ is $d_\xi$ for some $\xi\in \bF^\times$.

    In case $hg\ne \pm I_{V_2}$, then $\xi\ne \pm 1$, and the centralizer of $d_\xi$ in $Sp(V_2)$ is the subgroup $\{d_\mu:\mu\in\bF^\times\}$.
    It follows that $\tilde D$ is contained (since $\pi_1(D)=\{\bar d_\mu:\mu\in\bF^\times\}$) in the abelian subgroup
    \[
    \{\iota_{(d_\mu,d_\nu,h^{-1}d_\nu h)}:\mu,\nu\in\bF^\times\}
    \]
    of the centralizer of $\varphi$, and hence, by maximality of $D$, this subgroup is the whole $\tilde D$. Consider now the symplectic basis already taken in $V_1$ and $V_2$ and the symplectic basis $\{u_3=g(u_2),v_3=g(v_2)\}$ of $V_3$. In these bases the coordinate matrix of $g$ is the identity, while the coordinate matrix of $h$ is $d_\xi=\spmatrix{\xi}$. We may change $\varphi$ to
    \[
    \iota_{(I_{V_1},d_{\xi^{-1/2}},d_{\xi^{-1/2}})}\varphi=
    \hat\Phi_{(I_{V_1},d_{\xi^{-1/2}},d_{\xi^{1/2}})},
    \]
    whose square is the identity. That is, we may assume that $\varphi$ equals the map $\hat\Phi_{(I_{V_1},\tilde g,\tilde h)}$, with $\tilde g:V_2\rightarrow V_3$ such that $\tilde g(u_2)=\xi^{-1/2}u_3$, $\tilde g(v_2)=\xi^{1/2}v_3$, and $\tilde h:V_3\rightarrow V_2$ such that $\tilde h(u_3)=\xi^{1/2}u_2$, $\tilde h(v_3)=\xi^{-1/2}v_2$. If we take the new symplectic basis $\{\xi^{-1/2}u_3,\xi^{1/2}v_3\}$ of $V_3$,  the coordinate matrix of both $\tilde g$ and $\tilde h$ in these bases is the identity matrix, and
    \[
    D=\tilde D\times \langle\varphi\rangle=\{\iota_{(d_\mu,d_\nu,d_\nu)}: \mu,\nu\in\bF^\times\}\times \langle\varphi\rangle\cong(\bF^\times)^2\times \bZ_2.
    \]
    We get then a $\bZ^2\times\bZ_2$-grading on $\frg$.

    The first copy of $\bZ$ gives the $\bZ$-grading whose homogeneous spaces are the eigenspaces of the adjoint action of $z=\left(\begin{smallmatrix}1&0\\ 0&-1\end{smallmatrix}\right)$ in $\frsp(V_1)$ (a $5$-grading). The second copy of $\bZ$ gives the $\bZ$-grading whose homogeneous spaces are the eigenspaces of the adjoint action of $(z,z)\in\frsp(V_2)\oplus\frsp(V_3)$. The copy of $\bZ_2$ corresponds to the $\bZ_2$-grading on $\frg$ given by the automorphism $\varphi$ which fixes elementwise $V_1$ and interchanges $V_2$ and $V_3$. The type of the $\bZ^2\times\bZ_2$-grading is $(15,1)$, the two-dimensional homogeneous space being the subspace generated by $z\in\frsp(V_1)$ and $(z,z)\in\frsp(V_2)\oplus\frsp(V_3)$. This grading is certainly fine.

    \smallskip

    In case $hg=-I_{V_2}$, then we may compose $\varphi$ with the grading automorphism $\iota_{(-I_{V_1},I_{V_2},I_{V_3})}=\iota_{(I_{V_1},-I_{V_2},I_{V_3})}$ (which belongs to $D$), and hence assume that $hg=I_{V_2}$. In this latter case, $\varphi^2=1$, and the centralizer of $\varphi$ in $\Int\frg$ is the subgroup
    \[
    \{\iota_{(f_1,f_2,h^{-1}f_2h)}: f_1\in Sp(V_1),f_2\in Sp(V_2)\}.
    \]
    This group is isomorphic to the group $Sp(V_1)\times PSL(V_2)$ by means of the map $\iota_{(f_1,f_2,h^{-1}f_2h)}\mapsto (f_1,\bar f_2)$.

    Our assumptions on $\pi_1(D)$ show that in this case $\tilde D$ is contained in the group
    \[
    \{\iota_{(d_\mu,f,h^{-1}fh)}: \mu\in\bF^\times,\, f\in Sp(V_2)\}\cong\bF^\times\times PSL(V_2)\cong\bF^\times\times \Aut\frsp(V_2).
    \]
    The maximality of $D$ shows that $\tilde D$ is a maximal abelian diagonalizable subgroup of this group, and hence either there is a symplectic basis $\{u_2,v_2\}$ such that
    \[
    \tilde D=\{\iota_{(d_\mu,d_\nu,h^{-1}d_\nu h)}: \mu,\nu\in\bF^\times\},
    \]
    and we are in the previous situation giving the $\bZ^2\times\bZ_2$-grading, or $\pi_2(\tilde D)$ induces a $\bZ_2^2$-grading on $\frsp(V_2)$. In this latter case, there is a symplectic basis $\{u_2,v_2\}$ of $V_2$ such that
    \[
    \tilde D=\left\{\iota_{(d_\mu,f,h^{-1}fh)}: \mu\in\bF^\times,\, f\in\{I_{V_2},a_2,b_2,a_2b_2\}\right\}\cong\bF^\times\times\bZ_2^2.
    \]
    Therefore $D=\tilde D\times \langle\varphi\rangle\cong\bF^\times\times \bZ_2^3$ and we get a $\bZ\times\bZ_2^3$-grading on $\frg$. It is easy to check that its type is $(17)$.

    At this point we should look back to Subsection~\textbf{\ref{ss:Gamma1Z23Z22}}. The grading above refines the grading over $\bZ\times\bZ_2^2$ considered there ($\alpha=-\frac12$).

\smallskip

\item Otherwise $\Gamma$ induces a $\bZ_2^2$-grading on $\frsp(V_1)$. Let us check that in this case we have the following conditions:
    \begin{equation}\label{eq:osp42pis}
    \pi_1(D)\cong\bZ_2^2,\ \pi_1(\tilde D)\cong\bZ_2,\ \pi_2(\tilde D)\cong\bZ_2^2\cong\pi_3(\tilde D).
    \end{equation}
    Actually, the fact that $\pi_1(D)$ be isomorphic to $\bZ_2^2$ is equivalent to $\Gamma$ inducing a $\bZ_2^2$-grading on $\frsp(V_1)$. Then there is a symplectic basis $\{u_1,v_1\}$ of $V_1$ such that $\pi_1(D)=\langle \bar a_1,\bar b_1\rangle$ ($a_1$ and $b_1$ as in \eqref{eq:ajbj}). If $\pi_1(\tilde D)$ were isomorphic to $\bZ_2^2$, then, as $\tilde D$  induces a grading on $\frg$ preserving the three simple ideals of $\frg\subo$, the arguments in \textbf{\ref{ss:Gamma12Z3Z22}} would show that $\pi_2(\tilde D)\cong\bZ_2^2\cong\pi_3(\tilde D)$. But for any $\iota_{(f_1,f_2,h^{-1}f_2h)}$ and $\iota_{(\tilde f_1,\tilde f_2,h^{-1}\tilde f_2h)}$ in $\tilde D$, $f_1,\tilde f_1\in \{\pm I_{V_1},\pm a_1,\pm b_1,\pm a_1b_1\}$. Then, as $\pi_2(\tilde D)\cong\bZ_2^2$, $f_2\tilde f_2=\epsilon \tilde f_2f_2$ for $\epsilon=\pm 1$, and hence $(h^{-1}f_2h)(h^{-1}\tilde f_2h)=\epsilon(h^{-1}\tilde f_2h)(h^{-1}f_2h)$. Thus, since $\iota_{(f_1,f_2,h^{-1}f_2h)}$ and $\iota_{(\tilde f_1,\tilde f_2,h^{-1}\tilde f_2h)}$ commute, we conclude $f_1\tilde f_1=\tilde f_1f_1$, and hence $\pi_1(\tilde D)$ is a cyclic group of order $2$ ($f_1$ and $\tilde f_1$ are elements of a quaternion group which commute, and hence $\pi_1(\tilde D)$ is the image  of a commutative subgroup of the quaternion group modulo the center).

    Note that the element $\bar f$ is in $\pi_1(D)$ (recall $\varphi=\hat\Phi_{(f,g,h)}$), and $\pi_1(D)=\langle \bar f,\pi_1(\tilde D)\rangle$, so $f\ne\pm 1$ and we may take a symplectic basis $\{u_1,v_1\}$ of $V_1$ such that $f=b_1$ and $\pi_1(\tilde D)=\langle \bar a_1\rangle$. Thus there is an element of the form $\iota_{(\pm a_1,f_2,h^{-1}f_2h)}$ in $\tilde D$. Since $\pm a_1$ anticommutes with $b_1$ and $\iota_{(\pm a_1,f_2,h^{-1}f_2h)}$ is in the centralizer of $\varphi$, it follows that $f_2$ anticommutes with $hg$. Also $\varphi^2=\hat\Phi_{(f,g,h)}^2=\iota_{(f^2,hg,gh)}$ so $\overline{hg}\in\pi_2(\tilde D)$. Thus $\pi_2(\tilde D)=\langle \bar f_2,\overline{hg}\rangle\cong\bZ_2^2\cong\pi_3(\tilde D)$. This finishes the proof of the conditions in \eqref{eq:osp42pis}.

    \smallskip

    Therefore, we may choose a symplectic basis $\{u_2,v_2\}$ of $V_2$ such that we have
    \begin{equation}\label{eq:Dtildeotro}
    \tilde D\subseteq \left\{\iota_{(f_1,f_2,h^{-1}f_2h)}: f_1\in\{\pm I_{V_1},\pm a_1\},\ f_2\in\{\pm I_{V_2},\pm a_2,\pm b_2,\pm a_2b_2\}\right\},
    \end{equation}
    and $\varphi=\hat\Phi_{(b_1,g,h)}$, $hg=b_2$.

    But the intersection of the subgroup in the right hand side of Equation~\eqref{eq:Dtildeotro} with the centralizer of $\varphi$ is
    \[
    \begin{split}
    &\left\{\iota_{(\pm I_{V_1},f_2,h^{-1}f_2h)}: f_2\in\{\pm I_{V_1},\pm b_2\}\right\}\bigcup\left\{\iota_{\pm a_1,f_2,h^{-1}f_2h)}: f_2\in\{\pm a_2,\pm a_2b_2\}\right\}\\
    &\quad =
    \left\{\iota_{(\pm I_{V_1},f_2,h^{-1}f_2h)}: f_2\in\{I_{V_1},b_2\}\right\}\bigcup\left\{\iota_{\pm a_1,f_2,h^{-1}f_2h)}: f_2\in\{a_2,a_2b_2\}\right\},
    \end{split}
    \]
    which is an abelian subgroup. By maximality, $\tilde D$ coincides with this subgroup, and $D$ is generated by $\tilde D$ and $\varphi$.

    Note that $\varphi^2=\hat\Phi_{(b_1,g,h)}^2=\iota_{(b_1^2,hg,gh)}
    =\iota_{(-I_{V_1},b_2,h^{-1}b_2h)}$ and $\varphi^4=1$. Also, consider the element $\psi=\iota_{(a_1,a_2,h^{-1}a_2h)}$ in $\tilde D$. Then $\psi^2=\iota_{(-I_{V_1},I_{V_2},I_{V_3})}$ is the grading automorphism, so $\psi^4=1$ too. Besides $\langle \varphi\rangle\cap\langle\psi\rangle=1$. Hence, since $\tilde D$ has eight elements, and $[D:\tilde D]=[D:D\cap\Int \frg]=2$, it follows that $\lvert D\rvert=16$, and thus $D=\langle \varphi\rangle\times\langle\psi\rangle\cong\bZ_4^2$.

    Then we get a fine $\bZ_4^2$-grading whose homogeneous spaces are the common eigenspaces of $\varphi$ and $\psi$. Its type is easily computed to be $(13,2)$, the two homogeneous spaces of dimension two being contained in $\frg\subo$.
\end{enumerate}

Let us put together the fine gradings on $D(2,1;\alpha)$ in the final result of the paper:

\begin{theorem}\label{th:MainThD21a}
The fine gradings on the simple Lie superalgebra $D(2,1;\alpha)$ ($\alpha\ne 0,-1$) are, up to equivalence, the following:
\begin{romanenumerate}
\item The Cartan grading over $\bZ^3$ of type $(14,0,1)$.
\item A $\bZ_4\times\bZ_2^2$-grading of type $(14,0,1)$.
\item For $\alpha\not\in\{1,-2,-\frac12,\omega,\omega^2\}$, three non-equivalent $\bZ\times\bZ_2^2$-gradings of type $(11,3)$, and for $\alpha\in\{1,-2,-\frac12,\omega,\omega^2\}$ a unique $\bZ\times\bZ_2^2$-grading of type $(11,3)$.
\item For $\alpha\in\{\omega,\omega^2\}$, a $\bZ\times\bZ_3$-grading of type $(17)$.
\item For $\alpha\in\{1,-2,-\frac12\}$, a $\bZ\times\bZ_2^3$-grading of type $(17)$.
\item For $\alpha\in\{1,-2,-\frac12\}$, a $\bZ^2\times \bZ_2$-grading of type $(15,1)$.
\item For $\alpha\in\{1,-2,-\frac12\}$, a $\bZ_4\times\bZ_4$-grading of type $(13,2)$.
\end{romanenumerate}
\end{theorem}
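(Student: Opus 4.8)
The plan is to translate the classification of fine gradings into the classification of maximal abelian diagonalizable subgroups of $\Aut\frg$ up to conjugation, as recalled in Section~\ref{se:Introduction}, and then to exhaust the possibilities for such a subgroup $D=\Diag_\Gamma(\frg)$ according to how it interacts with the three simple ideals $\frsp(V_1),\frsp(V_2),\frsp(V_3)$ of $\frg\subo$. The basic dichotomy is whether $D\subseteq\Int\frg$ (the \emph{inner} case) or not (the \emph{outer} case), and by the description of $\Aut\frg$ recalled in Section~\ref{se:AutD21a} the outer case can only occur for $\alpha\in\{\omega,\omega^2\}$ (there is an automorphism permuting the three ideals cyclically) or for $\alpha\in\{1,-2,-\tfrac12\}$, i.e.\ $\frg\cong\frosp(4,2)$ (there is an automorphism transposing two ideals).

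In the inner case $D$ fixes each $\frsp(V_l)$ and hence induces a grading on it; by the known gradings on $\frsl_2(\bF)$ this grading is either one with a nonzero toral element in degree $0$ or a fine $\bZ_2^2$-grading. First I would rule out the configuration in which exactly one ideal carries a $\bZ_2^2$-grading, since then two commuting generators of $D$ would be forced to anticommute. The three surviving configurations --- all three ideals $\bZ_2^2$, all three toral, and two toral and one $\bZ_2^2$ --- are pinned down by locating $D$ inside the appropriate subgroup of $\Int\frg$ isomorphic to $(Q_8)^3/K$ or $(\bF^\times\times Q_8\times Q_8)/K$ and invoking Propositions~\ref{pr:Q83K} and \ref{pr:FxQ82K}; they produce, respectively, the $\bZ_4\times\bZ_2^2$-grading of type $(14,0,1)$, the Cartan $\bZ^3$-grading of type $(14,0,1)$, and a $\bZ\times\bZ_2^2$-grading of type $(11,3)$. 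In the last case one checks that $D$ is maximal precisely when no outer automorphism centralizes it, which is what separates generic $\alpha$ (three inequivalent choices of the ``toral'' ideal, since $\Aut\frg$ does not permute the ideals) from the special values, where the exceptional automorphisms collapse the choices and, for $\alpha=-\tfrac12$, the configuration with the toral element on the distinguished ideal is absorbed into a finer $\bZ\times\bZ_2^3$-grading.

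In the outer case, for $D(2,1;\omega)$ I would use Lemmas~\ref{le:cyclicauto} and \ref{le:centralizervarphi}: a diagonalizable automorphism $\varphi\in D$ permuting the ideals cyclically can be replaced, after modifying by an inner automorphism, by one with $\varphi^3=1$, and then the identifications of $V_2,V_3$ with $V=V_1$ can be chosen so that $\varphi=\Phi_{(I_V,I_V,I_V)}$; maximality of $D$ inside the centralizer of $\varphi$ forces $D=\{\iota_{(d_\mu,d_\mu,d_\mu)}:\mu\in\bF^\times\}\times\langle\varphi\rangle$, yielding the $\bZ\times\bZ_3$-grading of type $(17)$. For $\frosp(4,2)$ (say $\alpha=-\tfrac12$), writing $\tilde D=D\cap\Int\frg$ and analyzing the projections $\pi_l$ of $D$ and $\tilde D$ to the groups $PSL(V_l)$ via Lemmas~\ref{le:permut23auto} and \ref{le:centralizervarphibis}, one splits according to the two possibilities for the grading induced on $\frsp(V_1)$ and obtains the $\bZ^2\times\bZ_2$-grading of type $(15,1)$, the $\bZ\times\bZ_2^3$-grading of type $(17)$, and the $\bZ_4\times\bZ_4$-grading of type $(13,2)$.

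Finally I would assemble the list and confirm that there is no redundancy: the grading group together with the type separates all the families except the two of type $(14,0,1)$, which are distinguished by their groups $\bZ^3$ and $\bZ_4\times\bZ_2^2$. The step I expect to be the main obstacle is precisely this bookkeeping of equivalences as $\alpha$ ranges over the special values $\{1,-2,-\tfrac12,\omega,\omega^2\}$: one must track which exceptional automorphisms exist, how they permute the three ideals, and hence which of the otherwise-distinct fine gradings become equivalent or cease to be maximal --- in particular verifying in each case that the candidate $D$ is genuinely a maximal abelian diagonalizable subgroup, not merely an abelian diagonalizable one.
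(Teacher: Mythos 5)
Your proposal follows essentially the same route as the paper's proof, which is distributed over Sections~\ref{se:innergradingsD21a} and \ref{se:outerD21a}: translate to maximal abelian diagonalizable subgroups of $\Aut\frg$, split into the inner and outer cases according to whether $D$ fixes the three simple ideals of $\frg\subo$, handle the inner case by the toral/$\bZ_2^2$ dichotomy on each $\frsp(V_l)$ together with Propositions~\ref{pr:Q83K} and \ref{pr:FxQ82K}, and handle the outer case via Lemmas~\ref{le:cyclicauto}--\ref{le:centralizervarphibis}; the bookkeeping of the special values of $\alpha$ (Remark~\ref{re:alpha-12}) is also accounted for. One small enumeration slip to fix: after correctly discarding the configuration in which exactly one ideal carries a $\bZ_2^2$-grading (the anticommutation argument of Subsection~\textbf{\ref{ss:Gamma12Z3Z22}}), you list ``two toral and one $\bZ_2^2$'' among the survivors, but that is precisely the configuration just ruled out; the surviving mixed case, the one treated in Subsection~\textbf{\ref{ss:Gamma1Z23Z22}} via Proposition~\ref{pr:FxQ82K} and giving the $\bZ\times\bZ_2^2$-grading of type $(11,3)$, has \emph{one} toral ideal and \emph{two} $\bZ_2^2$-graded ideals.
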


\end{document}